\newtheorem*{rep@theorem}{\rep@title}
\newcommand{\newreptheorem}[2]{%
\newenvironment{rep#1}[1]{%
 \def\rep@title{#2 \ref{##1}}%
 \begin{rep@theorem}}%
 {\end{rep@theorem}}}
\def\End{\operatorname{End}}
\def\supp{\operatorname{supp}}
\def\dim{\operatorname{dim}}
\def\id{\operatorname{id}}
\def\Ad{\operatorname{Ad}}
\def\Im{\operatorname{Im}}
\def\id{\operatorname{id}}
\def\supp{\operatorname{supp}}
\newcommand{\IC}[0]{\mathbb{C}}
 \newcommand{\IN}[0]{\mathbb{N}}
 \newcommand{\IR}[0]{\mathbb{R}}
 \newcommand{\IT}[0]{\mathbb{T}}
 \newcommand{\IZ}[0]{\mathbb{Z}}
\newcommand{\CA}[0]{\mathcal{A}} \newcommand{\CB}[0]{\mathcal{B}}
\newcommand{\CC}[0]{\mathcal{C}} 
 \newcommand{\CF}[0]{\mathcal{F}}
 \newcommand{\CH}[0]{\mathcal{H}}
\newcommand{\CK}[0]{\mathcal{K}} 
\newcommand{\CO}[0]{\mathcal{O}} 
 \newcommand{\CT}[0]{\mathcal{T}}
\newcommand{\TL}[2]{%
\vcenter{\hbox{\begin{tikzpicture}
\foreach \x/\y/\z/\w in {#2} {
  \ifthenelse{\x = \z \AND \y = \w}
  {}
  {
    \ifthenelse{\y = \w}
    {\ifthenelse{\y = 0}
      {\draw (\x, \y) .. controls +(0, 0.5) and +(0, 0.5) .. (\z, \w);}
      {\draw (\x, \y) .. controls +(0, -0.5) and +(0, -0.5) .. (\z, \w);}
    }
    {\draw (\x, \y) .. controls +(0, 1) and +(0, -1) .. (\z, \w);}
  }
}
\foreach \x/\y/\z/\w in {#2} {
  \fill (\x, \y) circle (2pt); 
  \fill (\z, \w) circle (2pt); 
}
\draw (-0.5, 0) rectangle ({#1 - 0.5}, 1);
\clip (-0.6, 0) rectangle ({#1 - 0.6}, 1);
\end{tikzpicture}
}}}
\newtheorem{theorem}{Theorem}[section]
\newtheorem*{theorem*}{Theorem}
\newtheorem*{proposition*}{Proposition}
\newtheorem{proposition}[theorem]{Proposition}
\newtheorem{lemma}[theorem]{Lemma}
\newtheorem*{lemma*}{Lemma}
\newtheorem{example}[theorem]{Example}
\newtheorem{definition}[theorem]{Definition}
\newtheorem{corollary}[theorem]{Corollary}
\newtheorem{remark}[theorem]{Remark}
\numberwithin{equation}{section}
\begin{document}

\allowdisplaybreaks  

 \title{The Motzkin subproduct system}
\author{Valeriano Aiello} 
\address{Valeriano Aiello, Dipartimento di Matematica, Universit\`a di Roma Sapienza, P.le Aldo Moro 5, 00185 Roma, Italy, \url{https://github.com/valerianoaiello}}
\email{valerianoaiello@gmail.com}
\author{Simone Del Vecchio}
 \address{Simone Del Vecchio, Dipartimento di Matematica, Universit\`a degli studi di Bari, Via E. Orabona, 4, 70125 Bari, Italy}
\email{simone.delvecchio@uniba.it}
\author{Stefano Rossi}
 \address{Stefano Rossi, Dipartimento di Matematica, Universit\`a degli studi di Bari, Via E. Orabona, 4, 70125 Bari, Italy}
\email{stefano.rossi@uniba.it}

\begin{abstract}
We introduce a subproduct system of finite-dimensional Hilbert spaces by using the Motzkin planar algebra and its Motzkin Jones-Wenzl idempotents,  which generalizes the Temperley-Lieb subproduct system of Habbestad and Neshveyev.
 We provide a description of the
 corresponding Toeplitz  and Cuntz-Pimsner 
C$^*$-algebras as universal 
C$^*$-algebras, defined in terms of generators and relations, and we highlight properties of  their representation theory. 
\end{abstract}

\maketitle

\tableofcontents

\section*{Introduction}
Subproduct systems were introduced independently 
in  \cite{Shalit}  and
  \cite{Bhat}, with the latter referring to them as inclusion systems. In the context relevant to this paper, subproduct systems consist of a family of finite-dimensional Hilbert spaces (satisfying certain axioms) that can be combined together to form a type of Fock space, on which the generators of a Toeplitz $C^*$-algebra act. 
Although the initial source of motivation for defining subproduct systems was dilation theory for semigroups of completely positive maps,
since their introduction, subproduct systems have garnered significant interest, leading to the construction of remarkable 
$C^*$-algebras. 
In the spirit of the programme of \cite{Popescu, Popescu2}, their study can be viewed as a form of noncommutative algebraic geometry, as there is a one-to-one correspondence between subproduct systems and homogeneous ideals in  the algebra of non-commutative polynomials $\IC\langle X_1, \ldots, X_n\rangle$, a sort of  non-commutative Nullstellensatz.

In the present paper we want to make use of the Motzkin algebra to construct a novel subproduct system.
The Motzkin algebra was initially introduced by Benkart and Halverson in \cite{Halverson}. 
Later, Hatch, Ly, and Posner \cite{Ly}   explored its structure, unveiling its presentation, while Jones and Yang \cite{Jo21} leveraged this algebra to develop fusion categories governed by $A_n$ fusion rules.

  Motzkin algebras are a family of algebras $M_k(\lambda^{-1})$, where $k$ is any non-negative integer and $\lambda$ is a real parameter.
Similarly to the Temperley-Lieb algebras,   elements of   $M_k(\lambda^{-1})$
can be described as linear combinations of  rectangles, with $k$ points on the top side of the rectangle, $k$ on the bottom side. 
Inside each rectangle, there are up to $k$ edges joining the $2k$ points on the sides, without creating crossings.

Our interest in this  algebra stems from Habbestad and Neshveyev's work on subproduct systems \cite{Nesh, Nesh2}, where they introduce and investigate a C$^*$-algebra defined by means of the Temperley-Lieb algebra and of the Jones-Wenzl idempotents.
The corresponding subproduct systems are the so-called Temperley-Lieb sub-product systems, whose corresponding ideal
is generated by a single homogeneous polynomial of degree $2$. 
The study of the simplest of these polynomials goes back to Arveson \cite{Arveson98}, who first studied the subproduct system corresponding to
the ideal $\langle X_1X_2-X_2X_1\rangle$, whose Hilbert spaces afford an action of the unitary group $U(2)$ and make the subproduct system 
$U(2)$-equivariant.
A couple of decades later, in \cite{AK}  a family of  $SU(2)$-equivariant subproduct systems
 were defined and thoroughly analyzed. 
In \cite{Nesh, Nesh2}, some of the results of the foregoing paper were finally
strengthened and generalized exhibiting the first subproduct system endowed with a quantum group symmetry.
 Cuntz-Pimsner algebras of Temperley-Lieb sub-product systems were also studied in \cite{Nesh3} by means of $KK$-theory, where their $K$-homology were explicitly described.
Given that the Motzkin algebra generalizes the Temperley-Lieb algebra - encompassing similar idempotents - we aim to extend their analytical framework 
of the latter algebra
to the Motzkin algebra. Notably, both algebras belong to the broader class of planar algebras \cite{jo2}, whose definition was originally motivated by the study of subfactors but has also proven to be of interest beyond that domain, as for instance they provide representations of braid groups and Thompson groups, see e.g. \cite{Jonespolynomial, Jo14, Jo19, AJ}.

The cornerstone of the construction pursued in \cite{Nesh} was the notion of a Temperley-Lieb vector, a vector in  $H\otimes H$ that enabled the definition of a representation of the Temperley-Lieb algebra. In this article, we introduce the concept of a Motzkin pair, consisting of a Temperley-Lieb vector and another vector  $v\in H$. 
The introduction of
 the second vector 
 allows us to widen the class  
 of ideals corresponding to the Temperley-Lieb subproduct system. In \cite{KS}, a thorough analysis was conducted on ideals generated by monomials. Like the ideals studied in \cite{AK, Nesh, Nesh2}, those examined in the present paper are homogeneous and generated by a single degree-2 polynomial  $P$.
In addition, our ideals also include the subproduct systems studied in \cite{Nesh} by choosing a suitable Motzkin pair.
To each representation of the Motzkin algebra, it is possible to associate two $C^*$-algebras: the Toeplitz $\CT_P$ 
and the Cuntz-Pimnser C$^*$-algebras $\CO_P$. The former always contains the compact operators as an ideal, while the latter is the quotient with respect to that ideal. Both C$^*$-algebras admit an action of $\IT$, by mapping each generating isometry to a scalar multiple of itself and this is a key tool to providing a characterization of $\CT_P$ as a universal C$^*$-algebra. 
 
In Section \ref{sec1}, we revisit the definition and foundational properties of Motzkin algebras, introducing the concept of a Motzkin pair, which eases the construction of representations. We present three families of representations. 
Combining the first two yields the same subproduct
 as that studied in \cite{Nesh}. The third is novel.
 All these families of representations are   shown to be faithful for suitable values of the parameter $\lambda$ of the Motzkin algebra. Section \ref{sec2} elaborates on Motzkin Jones-Wenzl idempotents in Motzkin algebras and standard subproduct systems, culminating in the construction of Motzkin subproduct systems and the determination of their corresponding ideals. Finally, Section \ref{sec3} is focused on the C$^*$-algebras constructed from the  family of representations of the Motzkin algebra that we exhibited and presents a key result: the characterization of its Toeplitz 
 and Cuntz-Pimsner algebras  
 as universal C$^*$-algebras. 
 
\section{The Motzkin algebra}\label{sec1}
In this section, after recalling the definition and some key properties of the Motzkin algebra, we define some representations 
of it. For more detailed information, refer to \cite{Halverson, Ly, Jo21}.

For $k=1$,  $M_1(\lambda^{-1})$ is the unital universal $*$-algebra generated by a self-adjoint idempotent $p_1$. This algebra is isomorphic to $\IC\oplus \IC$. 
For $k\geq 2$,  the Motzkin $*$-algebra $M_k(\lambda^{-1})$ is the algebra generated by $1$, $t_1$, \ldots, $t_{k-1}$, $l_1$, \ldots, $l_{k-1}$, satisfying the following relations \cite[Theorem 4.1]{Ly}
\begin{enumerate}[start = 0] \label{relations-motzkin}
\item $t_i=t_i^*$
\item $l_i^2=l_i^3$
\item $l_i l_{i+1} l_i = l_i l_{i+1} = l_{i+1} l_i l_{i+1}$
\item $l_{i} l_{i}^* l_{i} = l_{i}$
\item $l_{i+1} l_{i}^* l_{i} = l_{i+1} l_{i}^* $, $l_{i} l_{i}^* l_{i-1} = l_{i}^* l_{i-1}$
\item $l_{i} l_i^* = l_{i+1}^* l_{i+1}$
\item if $|i-j|\geq 2$, one has $l_{i}^* l_{j} = l_j l_i^*$, $l_il_j=l_jl_i$,  $t_it_j=t_jt_i$,
$l_i t_j=t_jl_i$, $l_i^* t_j=t_jl_i^*$, 
\item $t_i^2=t_i$
\item $t_it_{i+1}t_i=\lambda^{2} t_i$, $t_{i+1}t_{i}t_{i+1}=\lambda^{2} t_{i+1}$, 
\item $t_il_i = t_i l_i^*$
\item $\lambda t_i l_{i+1}^* = t_i t_{i+1} l_i$
\item $l_i^* l_{i+1}^* t_i = t_{i+1} l_i^* l _{i+1}^*$
\item $t_i l_i t_i = \lambda t_i$
\end{enumerate}
It can be proved that   $\dim M_k$ is the $(2k)$-th Motzkin number, \cite{Halverson}.
 The elements $1$, $t_1$, \ldots, $t_{k-1}$, along with the relations in (0), (6), (7), (8), provide a presentation of the Temperley-Lieb algebra TL$_k(\lambda^{-1})$. 
 In the sequel we will also make use of the elements  $p_i:=l_i^*l_i$, for $1\leq i\leq k-1$, $p_{k}:=l_{k-1}l_{k-1}^*$.
  
The Motzkin algebra $M_k(\lambda^{-1})$ has a convenient graphical representation that is particularly useful for proving algebraic relations within the algebra. The basis of
 $M_k(\lambda^{-1})$ consists of diagrams in boxes with $k$ points along the top edge and $k$ points along the bottom edge, where each point is connected to at most one other point by a non-crossing planar edge within the box. Specifically, there can be at most $k$ edges inside the box.
The diagrams containing exactly $k$ edges form the basis of $TL_k(\lambda^{-1})$. The former elements are referred to as Motzkin diagrams, the latter are the Temperley-Lieb diagrams.

To multiply  two basis elements $d_1$ and $d_2$, 
 stack $d_1$ on top of $d_2$ forming a new box called $d_3$. Identify the bottom-row vertices in $d_1$ with the corresponding top-row vertices in $d_2$.  
Replace any loops that may have formed with the parameter  $\lambda^{-1}$; 
 and remove the edges inside $d_3$
  having at least an endpoint  on the bottom edge of   $d_1$. 
Here is an example,
\[
  d_1 \cdot d_2 =  \TL{3}{0/0/0/0, 1/0/2/0, 0/1/1/1, 0/1/1/1, 2/1/2/1}
  \cdot \TL{3}{0/0/1/0, 2/0/0/1, 1/1/2/1}
  = \lambda^{-1} \cdot \TL{3}{0/0/1/0, 0/1/1/1, 2/0/2/0, 2/1/2/1}
\]
There is an (anti-linear) involution $*: M_k(\lambda^{-1})\to M_k(\lambda^{-1})$ defined at the level of Motzkin diagram as reflection about a horizontal line.

There is a natural inclusion map   $\iota: M_k(\lambda^{-1})\to M_{k+1}(\lambda^{-1})$, which works as follows: expand the box slightly to the right, adding a point on the bottom edge at the right end and a corresponding point on the top edge at the right end. Then, draw a vertical edge connecting these two new points.
Here is an example
\[
\iota\left(  \TL{3}{0/0/0/0, 1/0/2/0, 0/1/1/1, 0/1/1/1, 2/1/2/1}\right) =   \TL{4}{0/0/0/0, 1/0/2/0, 0/1/1/1, 0/1/1/1, 2/1/2/1, 3/0/3/1}
\]
To ease the notation, we will often omit the symbol $\iota$, however, when two elements  of different Motzkin algebras $x\in M_k(\lambda^{-1})$ and $y\in M_{k+h}(\lambda^{-1})$  appear in the same expression, $x$ should be understood as $\iota^h(x)$.

In this description, the element $l_i$ is a box with vertical edges between all points on the top and bottom edges, except between the $i$-th and
$(i+1)$-th where there is only an edge between the $i$-th on the top and the $(i+1)$-th on the bottom. 
The identity consists of the box with $k$ vertical edges. Sometimes we will denote by $r_i$ the adjoint of $l_i$. See Figure \ref{gen_Mk}.
The tensor product of the algebras $M_a(\lambda^{-1})$ and $M_b(\lambda^{-1})$ 
is naturally injected into $M_{a+b}(\lambda^{-1})$,
 namely by the horizontal juxtaposition 
on the level of boxes. In the sequel, we will make use of this identification.

\begin{figure}
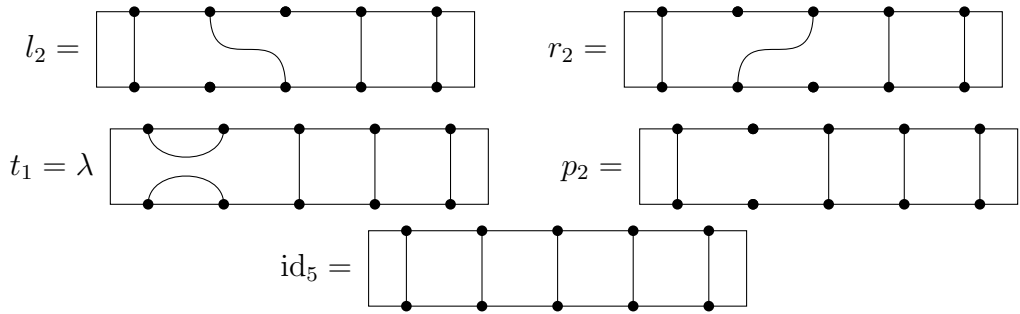

\phantom{This text will be invisible} 
\[
l_2= \TL{5}{0/0/0/1, 2/0/1/1, 3/0/3/1, 4/0/4/1, 1/0/1/0, 2/1/2/1}
\qquad r_2= \TL{5}{0/0/0/1, 1/0/2/1, 3/0/3/1, 4/0/4/1, 2/0/2/0, 1/1/1/1}
\]

\[
 t_1 = \lambda\; \TL{5}{0/0/1/0, 0/1/1/1, 2/0/2/1, 3/0/3/1, 4/0/4/1}\qquad
p_2= \TL{5}{0/0/0/1, 1/0/1/0, 1/1/1/1, 2/0/2/1, 3/0/3/1, 4/0/4/1}
\]
\[
\id_5= \TL{5}{0/0/0/1, 1/0/1/1, 2/0/2/1, 3/0/3/1, 4/0/4/1}
\]
\caption{The elements $l_2$, $r_2=l_2^*$, $t_1$, $p_2$, and the identity $\id_5$ of $M_5(\lambda^{-1})$.}\label{gen_Mk}
\end{figure}

We now recall 
the notion of Temperley-Lieb vector due to Habbestad and Neshveyev.
\begin{definition}\cite[Definition 2.1]{Nesh}
Given a Hilbert space $H$, with $2\leq \dim H<\infty$,  a vector $w\in H\otimes H$ is said to be  Temperley-Lieb if the projection $t=[\IC w]\in B(H\otimes H)$ satisfies 
\begin{align}
& (t\otimes 1) (1\otimes t) (t\otimes 1) = \lambda^{2} (t\otimes 1) \text{ in } B(H\otimes H\otimes H)\, .
\end{align} 
\end{definition}
Each Temperley-Lieb vector yields a representation of the Temperley-Lieb algebra with parameter $\lambda^{-1}$ that we will describe in detail later, see \cite[Section 1]{Nesh} and Proposition \ref{extended-rep}.
From now on, we assume that Temperley-Lieb vectors have norm $1$. 

In the rest of the paper we will make use of the following notation as it allows us to shorten many formulas.
For $i\in\{1, \ldots , n\}$, we set
\begin{align}
\bar i:=n-i+1 
\end{align}
By  \cite[Lemma 1.4, Prop. 1.5]{Nesh}, each Temperley-Lieb vector can be written as $v_A=\sum_{i=1}^n v_i\otimes Av_i$, where $v_i$ is an orthonormal basis of $H=\IC^n$, 
$A\in M_{n}(\IC)$ is such that $Av_i=a_i v_{\bar i}$ 
and $A^2$ is a unitary up to a scalar, i.e. $(A^2)^*A^2=\alpha \cdot\id$ for some $\alpha>0$. 

Throughout this paper, the inner product of a Hilbert space will be linear in the first variable and  anti-linear in the second.

We recall the following result that will come in handy in the sequel, cf. \cite[proof of Prop. 1.7]{Nesh}. 
\begin{lemma} \label{formula_E}
For any Temperley-Lieb vector $v_A$, one has
\begin{align}
 t&= \sum_{h,k=1}^n  \bar{a}_{  h} a_k (e_{kh}\otimes e_{\bar k \bar h})\,,
 \end{align}
 where $\{e_{kh}\}_{h,k=1}^n$ are the matrix units. 
 \end{lemma}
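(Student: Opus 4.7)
The plan is to carry out a direct computation in bases. Nothing subtle is going on: the first equality is immediate from the fact that $t$ is a rank-one projection onto the span of a unit vector, while the second equality is just an expansion in matrix units.

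First I would record the explicit form of $v_A$. Using $Av_i = a_i v_{\bar i}$, the defining formula gives
\begin{equation*}
v_A = \sum_{i=1}^n v_i \otimes A v_i = \sum_{i=1}^n a_i\, v_i \otimes v_{\bar i}.
\end{equation*}
Since by assumption $\|v_A\| = 1$, the orthogonal projection $t=[\mathbb{C} v_A]$ onto $\mathbb{C} v_A$ is the rank-one operator $x \mapsto \langle x, v_A\rangle v_A$, which establishes the first equality $t(v_i\otimes v_j)=\langle v_i\otimes v_j,v_A\rangle v_A$.

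Next I would compute the coefficient. Using sesquilinearity (linear in the first slot, antilinear in the second) and the expansion of $v_A$,
\begin{equation*}
\langle v_i\otimes v_j, v_A\rangle = \sum_{k=1}^n \bar{a}_k\, \delta_{i,k}\delta_{j,\bar k} = \bar{a}_i\,\delta_{j,\bar i},
\end{equation*}
so $t(v_i\otimes v_j) = \bar{a}_i\,\delta_{j,\bar i}\sum_{k=1}^n a_k\, v_k\otimes v_{\bar k}$.

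Finally I would evaluate the candidate right-hand side on the same basis element and check agreement. Applying $\sum_{h,k}\bar{a}_h a_k(e_{kh}\otimes e_{\bar k\bar h})$ to $v_i\otimes v_j$ collapses the sum to the terms with $h=i$ and $\bar h = j$, giving
\begin{equation*}
\sum_{h,k}\bar{a}_h a_k\, (e_{kh} v_i)\otimes(e_{\bar k\bar h} v_j) = \bar{a}_i\,\delta_{j,\bar i}\sum_{k=1}^n a_k\, v_k\otimes v_{\bar k},
\end{equation*}
which coincides with $t(v_i\otimes v_j)$. Since the two operators agree on the basis $\{v_i\otimes v_j\}$, they are equal as elements of $B(H\otimes H)$. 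There is no real obstacle here; the only thing to be careful about is the placement of complex conjugates induced by the convention that the inner product is antilinear in the second variable.
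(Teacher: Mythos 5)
Your proof is correct and follows essentially the same route as the paper: expand $v_A=\sum_k a_k v_k\otimes v_{\bar k}$, compute the coefficient $\langle v_i\otimes v_j,v_A\rangle=\bar a_i\delta_{j,\bar i}$, and match it against the action of $\sum_{h,k}\bar a_h a_k(e_{kh}\otimes e_{\bar k\bar h})$ on basis vectors. The only cosmetic difference is that you verify agreement of both sides on the basis separately, whereas the paper writes a single chain of equalities; the content is identical.
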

\begin{proof} 
We have
 \begin{align*}
 &t(v_i\otimes v_j)=\langle v_i\otimes v_j, v_A\rangle v_A
 =\langle v_i\otimes v_j, \sum_{k=1}^n  a_k v_k\otimes v_{\bar k}\rangle v_A\\
& = \sum_{k=1}^n \langle v_i\otimes v_j, a_k v_k\otimes v_{\bar k}\rangle v_A
= \sum_{k=1}^n \delta_{i, k} \delta_{j, \bar k} \bar{a}_k  v_A
 =  \delta_{i,  \bar j} \bar{a}_i  v_A
 = \sum_{k=1}^n \delta_{i,  \bar j}   \bar{a}_i  a_k v_k \otimes v_{\bar k}\\
&= \sum_{k=1}^n  \bar{a}_{  i} a_k  \delta_{i, \bar j} (v_k\otimes v_{\bar k})
= \sum_{h,k=1}^n  \bar{a}_{  h} a_k \delta_{i,h}	\delta_{\bar h, j}	(e_{kh}\otimes e_{\bar k \bar h}) (v_i\otimes v_j)\\
&= \sum_{h,k=1}^n  \bar{a}_{  h} a_k (e_{kh}\otimes e_{\bar k \bar h}) (v_i\otimes v_j)\,,
 \end{align*}
 where in the last equality we used that $e_{bc}v_d=\delta_{c,d}v_b$.
 \end{proof} 
As we said, one of our main goals is  to construct representations of the Motzkin algebras.

\begin{proposition} \label{extended-rep} 
With the notation as above, 
let $v_A\in H\otimes H$ be a Temperley-Lieb vector and
$v=\sum_{i=1}^n b_i v_i \in H$ a norm-$1$ 
vector 
 satisfying the following conditions
\begin{align*}
&\lambda = |a_i a_{\bar i}|\, , \qquad
 \sum_{k=1}^n |a_k|^2=1\, , \qquad
\sum_{h=1}^n |b_h|^2 =1\, , \\
&  \overline{a }_j \bar{b}_i  b_{\bar j} =     \bar{a}_{\bar i} \bar{b}_j b_{\bar i}\, ,  
 \qquad
    \lambda \bar{a}_i \bar{b}_k  b_{\bar i}= \bar{a}_{j} a_{\bar j} \bar{a}_{\bar k}  \bar{b}_i b_{\bar k}    \,   ,     
\qquad
  \sum_{k=1}^{n}  \bar{a}_{\bar k}  a_k |b_k|^2=\lambda \, ,
 \end{align*}
 for all $i$, $j$, $k$. 
 Set  
 \begin{equation*}
\begin{aligned}
&p(x)= \langle x,v\rangle v \\
&t(x\otimes y)= \langle x\otimes y, v_A \rangle v_A\\
&l(v_i\otimes v_j)= v_j\otimes p(v_{i})\\
&l^*(v_i\otimes v_j)=  p(v_{j})\otimes v_i
\end{aligned} 
\end{equation*}
Then there exists  a representation of the Motzkin algebra $M_r(\lambda^{-1})$ on $H^{\otimes r}$ defined as
\begin{equation*}
\begin{aligned}
&t_i = \id^{\otimes (i-1)} \otimes t \otimes \id^{\otimes (r-i-1)} \\
&l_i = \id^{\otimes (i-1)} \otimes l \otimes \id^{\otimes (r-i-1)} \qquad 1 \leq i \leq r-1 \\
&p_i = \id^{\otimes (i-1)} \otimes p \otimes \id^{\otimes (r-i)} \qquad 1 \leq i \leq r \\
\end{aligned}
\end{equation*}
where $\id\in\CB(H)$.
\end{proposition}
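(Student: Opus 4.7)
The strategy is to verify each of the defining relations (0)--(12) of $M_r(\lambda^{-1})$ on basis tensors $v_{i_1}\otimes\cdots\otimes v_{i_r}$. Since $t$, $l$, $l^*$ all act on two adjacent tensor factors and $p$ on a single factor, the commutation relations in (6) for $|i-j|\geq 2$ follow immediately from the tensor structure, and each of the remaining relations reduces to an identity on $H^{\otimes 2}$ or $H^{\otimes 3}$.

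Relations (0), (7), (8) involve only the $t_i$ and state that they generate a representation of the Temperley-Lieb algebra $TL_r(\lambda^{-1})$; this is precisely the content of the Habbestad--Neshveyev theory of Temperley-Lieb vectors, using the hypotheses $\sum_k|a_k|^2=1$ and $\lambda=|a_ia_{\bar i}|$. The \emph{pure-$l$} relations (1), (2), (3), (4), (5) are then verified by direct computation on basis vectors: starting from $p(v_i)=\bar b_i\, v$, the assumption $\sum_h|b_h|^2=1$ forces $p^2=p$, and each of these relations reduces after one expansion of $v=\sum_k b_kv_k$ to a tautology. In particular, straightforward bracket computations show that $l_il_i^*$ and $l_{i+1}^*l_{i+1}$ both act as the rank-one projection onto $v$ in the $(i+1)$-th slot, confirming (5).

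The substantive content lies in the mixed relations (9)--(12). The plan is to use Lemma \ref{formula_E} to write $t(v_i\otimes v_j)=\overline{a}_i\delta_{j,\bar i}\,v_A$, then expand both sides of each relation on an arbitrary basis input and match scalar coefficients. Explicitly, relation (9) becomes the identity $\overline{a}_j\bar b_i b_{\bar j}=\bar a_{\bar i}\bar b_jb_{\bar i}$, which is the fourth stated hypothesis; relation (12) reduces, after expanding $v_A$ term-by-term, to $\sum_k\bar a_{\bar k}a_k|b_k|^2=\lambda$, the sixth hypothesis; and relation (10) reduces to the fifth, $\lambda\bar a_i\bar b_k b_{\bar i}=\bar a_j a_{\bar j}\bar a_{\bar k}\bar b_i b_{\bar k}$. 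A pleasant surprise is that relation (11) holds automatically: both $l_1^*l_2^*t_1$ and $t_2l_1^*l_2^*$ collapse, using $\|v\|=\|v_A\|=1$, to the same expression $\bar b_k\,\overline{a}_i\delta_{j,\bar i}\,v\otimes v_A$ on any basis vector, requiring no additional condition.

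The main obstacle is the bookkeeping for relation (10). Both $\lambda t_1l_2^*(v_i\otimes v_j\otimes v_k)$ and $t_1t_2l_1(v_i\otimes v_j\otimes v_k)$ turn out to be scalar multiples of $v_A\otimes v_j$, but with coefficients of different shape: the left-hand coefficient depends only on $i$ and $k$, while the right-hand coefficient also carries a factor $\bar a_j a_{\bar j}$ coming from the middle $t_2$. The fifth hypothesis is exactly the identity forcing the two to agree for every $i$, $j$, $k$, and this in particular requires $\bar a_j a_{\bar j}$ to have a $j$-independent phase (its modulus is already $\lambda$). Once it is confirmed that each of the six listed conditions matches precisely one of the relations (0), (7), (8), (9), (10), (12) (with (11) coming for free), and that the remaining relations are governed entirely by the projection $p$, the proposition follows.
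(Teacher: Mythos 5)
Your proposal is correct and follows essentially the same route as the paper: relations (0), (7), (8) are delegated to the Temperley-Lieb vector property, the pure-$l$ relations (1)--(5) reduce to $p^2=p$ (i.e.\ $\|v\|=1$) by direct computation on basis tensors, and the mixed relations (9), (10), (12) are matched one-to-one with the fourth, fifth and sixth hypotheses, with (11) holding automatically. Your additional observation that $l_il_i^*$ and $l_{i+1}^*l_{i+1}$ both equal the rank-one projection onto $v$ in the $(i+1)$-th slot is a clean way to package the paper's verification of (5).
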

\begin{proof} 
We need to verify that conditions (0)–(12) are satisfied. Conditions (0), (7), and (8) establish that
 $v_A$ is a Temperley-Lieb vector. 

Condition (1) reads as
\begin{align*}
l^2(v_i\otimes v_j)&=p(v_i)\otimes p(v_j)
=p(v_j)\otimes p(v_i)
=l^3(v_i\otimes v_j)
\end{align*}
which holds for any $v$ as $p(v_i)=\bar{b}_iv$.
 
Condition (2) reads as
\begin{align*}
&(l\otimes \id)(\id \otimes l)(l\otimes \id)(v_i\otimes v_j\otimes v_k)=v_k\otimes p(v_j)\otimes p(v_i)\\
&=v_k\otimes p(v_i)\otimes p(v_j)=(l\otimes \id)(\id\otimes l)(v_i\otimes v_j\otimes v_k)\,,
\end{align*}
which holds for any $v$.

Condition (3)  expresses the fact that $l$ is a partial isometry. In order for $l$ to be such an operator we need 
\begin{align*}
&ll^*l(v_i\otimes v_j)=ll^*(v_j \otimes p(v_i))=l(p(p(v_i)) \otimes v_j)\\
&= v_j \otimes p(p(p(v_i)))
= v_j\otimes p(v_i)=l(v_i\otimes v_j).
\end{align*}
 
As for the first of (4), 
 we have
\begin{align*}
&(\id \otimes l)(l^*\otimes \id)(l\otimes \id)(v_i\otimes v_j \otimes v_k)=(\id \otimes l)( l^*\otimes \id)(  v_j\otimes p(v_{i})\otimes v_k)\\
&=(\id \otimes l)( p(p(v_{i}))\otimes v_j\otimes v_k)=p(p(v_{i}))\otimes v_k\otimes p(v_{j})\\
&=p(v_{i})\otimes v_k\otimes p(v_{j})=p(v_j)\otimes v_k\otimes p(v_i)\\
&=(\id \otimes l)(p(v_j)\otimes v_i\otimes v_k)=(\id \otimes l)(l^*\otimes \id)(v_i\otimes v_j \otimes v_k)
\end{align*}
 Likewise, for the second equation we have
\begin{align*}
&(\id \otimes l)(\id \otimes l^*)(l\otimes \id)(v_i\otimes v_j \otimes v_k)=(\id \otimes l)(\id \otimes l^*)(v_j\otimes p(v_i) \otimes v_k)\\
&=(\id \otimes l) (v_j\otimes p(v_k) \otimes p(v_i))=  v_j\otimes p(v_i) \otimes p(p(v_k))\\
&=  v_j\otimes p(v_k) \otimes p(v_i)=(\id \otimes l^*)(v_j\otimes p(v_i) \otimes v_k)\\
&=(\id \otimes l^*)(l \otimes \id)(v_i\otimes v_j \otimes v_k)\,.
\end{align*}

As for (5)
\begin{align*}
&(\id \otimes l^*)(\id\otimes l)(v_i\otimes v_j \otimes v_k)=(\id \otimes l^*)( v_i\otimes v_k\otimes p(v_j))\\
&=   v_i\otimes p(p(v_j))\otimes v_k=(l \otimes \id)(  p(v_{j})\otimes v_i\otimes v_k)\\
&=(l \otimes \id)(l^*\otimes \id)(v_i\otimes v_j \otimes v_k)\,.
\end{align*}

(6) is obvious.

Condition (9) imposes the first restriction on $v$, namely 
 $$
  \overline{a }_j \bar{b}_i  b_{\bar j} =     \bar{a}_{\bar i} \bar{b}_j b_{\bar i}\,.
$$
Indeed, we have
\begin{align*}
&tl(v_i\otimes v_j)=t(v_j\otimes p(v_i))= \langle  v_j\otimes p(v_i),  \sum_{k=1}^{n} a_k v_k\otimes v_{\bar k}\rangle  v_A\\
&= \sum_{k=1}^{n}  \langle  v_j, v_k \rangle \langle p(v_i), a_k  v_{\bar k}\rangle  v_A
= \sum_{k=1}^{n}  \langle  v_j, v_k \rangle \langle \bar{b}_iv, a_k  v_{\bar k}\rangle  v_A\\
&= \sum_{k=1}^{n}  \delta_{j,k}  \langle \bar{b}_iv, a_k  v_{\bar k}\rangle  v_A
= \sum_{k=1}^{n}    \delta_{j,k}    \bar{b}_i b_{\bar k} \bar{a}_k     v_A=    \overline{a }_j \bar{b}_i  b_{\bar j}   v_A\\
&=     \bar{a}_{\bar i} \bar{b}_j b_{\bar i}   v_A=   \sum_{k=1}^{n}    \bar{b}_j b_k  \bar{a}_k   \delta_{i, \bar k}  v_A
 =   \sum_{k=1}^{n}  \langle  \bar{b}_j v,  a_k v_k \rangle \langle v_i,  v_{\bar k}\rangle  v_A\\
&= \langle p(v_j)\otimes v_i,  \sum_{k=1}^{n} a_k v_k\otimes v_{\bar k}\rangle  v_A=t( p(v_j)\otimes v_i)
=tl^*(v_i\otimes v_j)\,.
\end{align*}

Condition (10) requires 
$$
    \bar{a}_{j} a_{\bar j} \bar{a}_{\bar k}  \bar{b}_i b_{\bar k}           = \lambda \bar{a}_i \bar{b}_k  b_{\bar i}
$$ 
to hold. Indeed, 
 one has
\begin{align*}
&\lambda (t\otimes \id)(\id \otimes l^*)(v_i\otimes v_j \otimes v_k) = \lambda(t\otimes \id)(  v_i\otimes p(v_k) \otimes v_j) \\
 &= \lambda \langle v_i\otimes p(v_k),  \sum_{r=1}^{n}   a_r \, v_r\otimes v_{\bar r} \rangle v_A\otimes v_j  \\
 &
 = \lambda \langle v_i\otimes \bar{b}_k  v,  \sum_{r=1}^{n}   a_r \, v_r\otimes v_{\bar r} \rangle v_A\otimes v_j  = \lambda\sum_{r=1}^{n} \delta_{i,r} \bar{a}_r \bar{b}_k    \langle   v,   v_{\bar r} \rangle v_A\otimes v_j  \\
 &
 = \lambda \bar{a}_i \bar{b}_k  b_{\bar i}    v_A\otimes v_j\,,
 \end{align*}
and
\begin{align*}
&  (t\otimes \id) (\id\otimes t) (l \otimes \id)(v_i\otimes v_j \otimes v_k) =(t\otimes \id) (1\otimes t) ( v_j\otimes p(v_i) \otimes v_k)\\
&= (t\otimes \id) \left( \langle   \bar{b}_i v \otimes v_k,  \sum_{r=1}^{n} a_r  v_r\otimes v_{\bar r}\rangle\, v_j\otimes v_A\right)  \\
&=   \sum_{r=1}^{n} (t\otimes \id)  \left( \bar{a}_r \bar{b}_i  b_r \delta_{k,\bar r} \, v_j\otimes v_A\right)  
=   (t\otimes \id)  \left( \bar{a}_{\bar k} \bar{b}_i b_{\bar k}  \, v_j\otimes v_A\right)  \\
&=  (t\otimes \id)  \left(  \sum_{r=1}^{n} a_r \bar{a}_{\bar k}  \bar{b}_i b_{\bar k} \, v_j\otimes v_r\otimes v_{\bar r}\right)  
=    \sum_{r=1}^{n} a_r \bar{a}_{\bar k}  \bar{b}_i b_{\bar k} \, (t\otimes \id) (v_j\otimes v_r\otimes v_{\bar r})  \\
&=   \sum_{r=1}^{n} a_r \bar{a}_{\bar k}  \bar{b}_i b_{\bar k}  \,  \langle    v_j\otimes v_r,  \sum_{r'=1}^{n} a_{r'} v_{r'}\otimes v_{\bar r'} \rangle v_A\otimes v_{\bar r}    \\
 &
=   \sum_{r, r'=1}^{n} \delta_{j,r'}  \bar{a}_{r'} a_r \bar{a}_{\bar k}  \bar{b}_i b_{\bar k}      \delta_{r,\bar r'}  \,   v_A\otimes v_{\bar r} 
=     \bar{a}_{j} a_{\bar j} \bar{a}_{\bar k}  \bar{b}_i b_{\bar k}           \,   v_A\otimes v_{j}\,.
 \end{align*}

As these computations show, (11) is automatically satisfied
\begin{align*}
&  (l^*\otimes \id)(\id \otimes l^*)(t\otimes \id)(v_i\otimes v_j \otimes v_k)\\
&= \sum_{r=1}^{n} \bar{a}_r (l^*\otimes \id)(\id \otimes l^*) (\langle v_i\otimes v_j, v_r \otimes v_{\bar r}\rangle v_A\otimes v_k)\\
&= \sum_{r=1}^{n} \bar{a}_r \delta_{i,r} \delta_{j, \bar r} (l^*\otimes \id)(\id \otimes l^*)   (v_A\otimes v_k)\\
&=  \bar{a}_i   \delta_{j, \bar i} (l^*\otimes \id)(\id \otimes l^*)   (v_A\otimes v_k)\\
&=   \sum_{r=1}^{n} a_r \bar{a}_i   \delta_{j, \bar i} (l^*\otimes \id)(\id \otimes l^*)   (v_r\otimes v_{\bar r}\otimes v_k)\\
&=   \sum_{r=1}^{n} a_r \bar{a}_i   \delta_{j, \bar i} (l^*\otimes \id)  (v_r\otimes p(v_k)\otimes v_{\bar r})\\
&=   \sum_{r=1}^{n} a_r \bar{a}_i   \delta_{j, \bar i} (p(v_k)\otimes v_r\otimes v_{\bar r})
=   \bar{a}_i   \delta_{j, \bar i} \, \bar{b}_kv\otimes v_A\\
&=  \sum_{r=1}^{n} \delta_{r,i} \delta_{j,\bar r}\bar{a}_r     \bar{b}_kv\otimes v_A
=  \sum_{r=1}^{n}   \langle v_i \otimes v_j, a_r v_r\otimes v_{\bar r} \rangle \bar{b}_kv\otimes v_A\\
&=   (\id\otimes t)( \bar{b}_kv\otimes v_i \otimes v_j)
=  (\id\otimes t)( p(v_k)\otimes v_i \otimes v_j)\\
&=  (\id\otimes t)(l^* \otimes \id)( v_i\otimes p(v_k) \otimes v_j)
=  (\id\otimes t)(l^* \otimes \id)(\id\otimes l^*)(v_i\otimes v_j \otimes v_k)\,.
\end{align*}

As for (12), one has
 \begin{align*}
&tlt(v_i\otimes v_j)=tl(\langle v_i\otimes v_j,  \sum_{k=1}^{n} a_k v_k\otimes v_{\bar k}\rangle v_A)  \\
&=tl(  \sum_{k=1}^{n} \delta_{i,k} \delta_{j,\bar k} \bar{a}_k v_A)   =tl(  \delta_{j,\bar i} \bar{a}_i v_A )
=tl(  \sum_{k=1}^{n}  \delta_{j,\bar i} \bar{a}_i a_k \; v_k\otimes v_{\bar k}) \\
&=t(  \sum_{k=1}^{n}   \delta_{j,\bar i} \bar{a}_i a_k \; v_{\bar k}\otimes p(v_k)) 
=t(  \sum_{k=1}^{n}  \delta_{j,\bar i} \bar{a}_i a_k \bar{b}_k\; v_{\bar k}\otimes v) \\
&=\sum_{k=1}^{n}  \delta_{j,\bar i} \bar{a}_i a_k \bar{b}_k\; t(   v_{\bar k}\otimes v) 
=\sum_{k, k'=1}^{n}  \delta_{j,\bar i} \bar{a}_i a_k \bar{b}_k\; \langle   v_{\bar k}\otimes v, a_{k'} v_{k'}\otimes v_{\bar k'}\rangle v_A  \\
&=\sum_{k, k'=1}^{n}  \delta_{j,\bar i} \delta_{\bar k, k'} \bar{a}_{k'} \bar{a}_i a_k |b_k|^2\, v_A  
=\sum_{k=1}^{n}  \delta_{j,\bar i}  \bar{a}_{\bar k} \bar{a}_i a_k |b_k|^2\, v_A\,, 
\end{align*}
while 
 \begin{align*}
\lambda t(v_i\otimes v_j)&= \lambda  \langle v_i\otimes v_j,  \sum_{k=1}^{n} a_k v_k\otimes v_{\bar k}\rangle v_A  
=  \sum_{k=1}^{n} \lambda \delta_{i,k} \delta_{j,\bar k} \bar{a}_k v_A  = \lambda \delta_{j,\bar i} \bar{a}_i v_A\,.   
\end{align*}
Therefore we need
$$
\sum_{k=1}^{n}  \bar{a}_{\bar k}  a_k |b_k|^2=\lambda\,.
$$
\end{proof} 
We now present a definition that we will use throughout the paper.
\begin{definition}\label{defsupport}
Given a vector $v$, we call it support $\supp(v)$ the set of $i$ such that $\langle v_i, v\rangle \neq 0$.
\end{definition}
In the next lemma, we eliminate some of the conditions required in Proposition  \ref{extended-rep}.
\begin{lemma}\label{basicpropMpair}
Let $(v_A,v)$ be a pair satisfying the same hypotheses of Proposition \ref{extended-rep}. Then, we have  that
\begin{enumerate}
\item $|b_j|=|b_{\bar j}|$; in particular, $j\in \supp(v)$ if and only if $\bar j\in \supp(v)$;
\item $a_j=a_{\bar j}$ for all $j\in \supp(v)$;
\item $\lambda = \bar a_i a_{\bar i}\in\IR_+$ for all $i$. 
 \end{enumerate}
Conversely, for any pair of vectors $(v_A=\sum_{i=1}^n a_i v_i\otimes v_{\bar{i}}, v=\sum_{i=1}^n b_i v_i)$, where $v_A$ is a Temperley-Lieb vector in $H\otimes H$ and $v$ is norm-$1$ vector in $H$, 
satisfying
\begin{equation*}
\begin{aligned}
&\lambda = \bar{a}_i a_{\bar i}\in\IR_+\, , \qquad
\sum_h |b_h|^2=1\, , \qquad
  \overline{a }_j \bar{b}_i  b_{\bar j} =     \bar{a}_{\bar i} \bar{b}_j b_{\bar i}  
\end{aligned}
\end{equation*}
 the following conditions are automatically satisfied
\begin{enumerate}
\item[(4)] the condition $\sum_{k=1}^{n}  \bar{a}_{\bar k}  a_k |b_k|^2=\lambda$;
\item[(5)] the condition $ \bar{a}_{j} a_{\bar j} \bar{a}_{\bar k}  \bar{b}_i b_{\bar k}           = \lambda \bar{a}_i \bar{b}_k  b_{\bar i}$;
\end{enumerate}
\end{lemma}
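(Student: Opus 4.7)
The plan is to exploit the relation $\lambda=\bar a_i a_{\bar i}$ everywhere and to extract the structural identities (1)--(3) by making clever substitutions in condition (9) of Proposition \ref{extended-rep}, namely $\overline{a}_j \bar{b}_i b_{\bar j} = \bar{a}_{\bar i} \bar{b}_j b_{\bar i}$, and in condition (10), $\lambda \bar{a}_i \bar{b}_k b_{\bar i}= \bar{a}_{j} a_{\bar j} \bar{a}_{\bar k}  \bar{b}_i b_{\bar k}$. Notice that the hypothesis $\lambda=|a_ia_{\bar i}|>0$ guarantees $a_j\neq 0$ for all $j$, so any factor $\bar a_j$ can be cancelled freely; this will be used throughout.

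For the forward direction, I would first substitute $i=\bar j$ in (9) to obtain $\bar{a}_j|b_{\bar j}|^2=\bar{a}_j|b_j|^2$, which upon cancelling $\bar a_j$ yields $|b_j|=|b_{\bar j}|$, proving (1); in particular $\supp(v)$ is stable under $j\mapsto\bar j$. Next, setting $i=j$ in (9) gives $(\bar a_j-\bar a_{\bar j})\bar b_j b_{\bar j}=0$, and for $j\in\supp(v)$ both $b_j,b_{\bar j}$ are nonzero by (1), hence $a_j=a_{\bar j}$, proving (2). For (3), I would set $i=\bar k$ in (10) and obtain $\lambda|b_k|^2\bar a_{\bar k}=\bar a_j a_{\bar j}|b_{\bar k}|^2\bar a_{\bar k}$. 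Picking any $k\in\supp(v)$---which is nonempty since $\|v\|=1$---and invoking (1) to equate $|b_k|^2$ with $|b_{\bar k}|^2$, one cancels $|b_k|^2\bar a_{\bar k}\neq 0$ to deduce $\lambda=\bar a_j a_{\bar j}$ for every $j$; in particular this common value is a positive real number.

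For the converse, assuming only $\lambda=\bar a_i a_{\bar i}\in\IR_+$, $\sum_h|b_h|^2=1$ and the symmetry $\overline{a}_j \bar{b}_i b_{\bar j} = \bar{a}_{\bar i} \bar{b}_j b_{\bar i}$, condition (4) follows at once: substituting $i=\bar k$ in the first assumption gives $\bar a_{\bar k} a_k=\lambda$ for every $k$, so $\sum_k\bar a_{\bar k} a_k|b_k|^2=\lambda\sum_k|b_k|^2=\lambda$. For (5), using $\bar a_j a_{\bar j}=\lambda$, the identity to be proved simplifies to $\bar a_{\bar k}\bar b_i b_{\bar k}=\bar a_i\bar b_k b_{\bar i}$, which is precisely the symmetry hypothesis under the renaming $(i,j)\mapsto(k,i)$.

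The only nonroutine point will be (3): the assumption $|a_ia_{\bar i}|=\lambda$ fixes only the modulus, so one genuinely needs the extra input of (10), and the argument requires both the previously established equality $|b_k|=|b_{\bar k}|$ and the nonemptiness of $\supp(v)$ in order to cancel the common factor. Once this is in place, the remaining verifications are direct substitutions and reindexings.
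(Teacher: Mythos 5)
Your proof is correct and follows essentially the same route as the paper: items (1)–(3) come from the substitutions $i=\bar j$, $i=j$ in condition (9) and a substitution in condition (10) (you use $i=\bar k$ where the paper uses $i=k$, an immaterial difference), and the converse items (4)–(5) reduce to the hypotheses by the same direct simplifications, with your justification for cancelling $\bar a_j$ (via $\lambda=|a_ja_{\bar j}|>0$) being an acceptable substitute for the paper's appeal to the invertibility of $A$.
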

\begin{proof}
1) By the fourth equation displayed in the statement of Proposition \ref{extended-rep} with $i=\bar j$, we get 
$\bar a_{ j} \bar b_{\bar{j}}b_{\bar{j}}= \bar a_j \bar b_j b_j$. Therefore, since $A^2$ is a unitary matrix up to scalar factor, $b_j\neq 0$ if and only if $b_{\bar{j}}\neq 0$.

2) We make use again of the same equation, but with $i=j$. In this case we get
$\bar a_{  j} \bar b_{j}b_{\bar{j}}= \bar a_{\bar j} \bar b_j b_{\bar j}$. Therefore, $a_j=a_{\bar j}$.

3) Let $k$ be in $\supp(v)$. 
By the fifth equation displayed in the statement of Proposition \ref{extended-rep}, with $i=k$,  we have   
\begin{align*}
 \bar{a}_{j} a_{\bar j} \bar{a}_{\bar k}  \bar{b}_k b_{\bar k}           &= \lambda \bar{a}_k \bar{b}_k  b_{\bar k}\,
\end{align*}
which by (2) is equivalent to
\begin{align*}
 \bar{a}_{j} a_{\bar j}   &= \lambda \,.
\end{align*}

4) Thanks to (2) we know that $a_j=a_{\bar j}$ whenever $j\in\supp(v)$ and thus $\lambda=|a_j|^2$.
Therefore, we may rewrite the equation as
\begin{align*}
\sum_{k=1}^{n}  \bar{a}_{\bar k}  a_k |b_k|^2&=\sum_{k=1}^{n}  |a_k|^2 |b_k|^2=\sum_{k=1}^{n}  \lambda |b_k|^2=\lambda\,.
\end{align*}

5) As $\lambda = \bar{a}_i a_{\bar i}$, the equation is equivalent to $\bar{a}_{\bar k}  \bar{b}_i b_{\bar k}           =   \bar{a}_i \bar{b}_k  b_{\bar i}$\,.
\end{proof}

In view of the preceding results, we are now ready to give the following definition.

\begin{definition}\label{Motzkin_pair}
Let $H$ be a Hilbert space, with $\dim H=n\geq 2$ and fix an orthonormal basis $\{v_i\}_{i=1}^n$.
A Motzkin pair is given by $(v_A, v)$, where $A\in M_{n}(\IC)$ has the form $Av_i=a_i v_{\bar i}$, 
 $v_A:=\sum_i v_i\otimes Av_i\in H\otimes H$ is a (unit) Temperley-Lieb vector  
 with $\lambda = |a_i a_{\bar i}|=\bar{a}_i a_{\bar i}$ for all $i\in \{1, \ldots, n\}$, $v=\sum_{i=1}^n b_iv_i$ is a unit vector of $H$,  
 satisfying the following conditions
\begin{equation}\label{eqMotzkinPair}
\begin{aligned}
&\lambda = \bar{a}_i a_{\bar i}\in\IR_+\, , \qquad
\sum_{i=1}^n |a_h|^2= \sum_{i=1}^n |b_h|^2=1\, , \qquad
  \overline{a }_j \bar b_i  b_{\bar j} =     \bar{a}_{\bar i} \bar b_j b_{\bar i} 
\end{aligned}
\end{equation}
where $1\leq i, j\leq n$.\\
Every Motzkin pair yields a representation 
$\pi_{v_{A},v}: M_k(\lambda^{-1})\to \CB(H^{\otimes k})$ by setting
for all  $1\leq i, j\leq n$
\begin{equation*}
\begin{aligned}
&p(x)= \langle x,v\rangle v \, ,\\
&t(x\otimes y)= \langle x\otimes y, v_A \rangle v_A\, ,\\
&l(v_i\otimes v_j)= v_j\otimes p(v_{i})\, , \qquad l^*(v_i\otimes v_j)=  p(v_{j})\otimes v_i\, ,
\end{aligned} 
\end{equation*}
and for $1 \leq i \leq k-1$
\begin{equation*}
\begin{aligned}
&\pi_{v_{A},v}(t_i) = \id^{\otimes (i-1)} \otimes t \otimes \id^{\otimes (k-i-1)} \\
&\pi_{v_{A},v}(l_i) = \id^{\otimes (i-1)} \otimes l \otimes \id^{\otimes (k-i-1)}  \\
&\pi_{v_{A},v}(p_i) = \id^{\otimes (i-1)} \otimes p \otimes \id^{\otimes (k-i)}  
\end{aligned}
\end{equation*}
where $\id\in\CB(H)$.
\end{definition} 
For brevity and in order to improve readability, in the sequel we will omit the symbol $\pi_{v_{A},v}$.

We now present three examples of Motzkin pairs. The third is   the most general among
those we introduce,
and we will investigate it accurately in the rest of the paper.
\begin{example}\label{seriesofexamples}
 \begin{enumerate}
\item[i)] If $\dim H=n=2m+1$, as an application of Proposition \ref{extended-rep}, a Motzkin pair is provided by 
a matrix $A$ such that $\bar{a}_ia_{\bar i}=\lambda$ for all $i\in \{1, \ldots, n\}$, and a vector $v=v_{m+1}$.  
Note that for $n=3$, 
this representation coincides with that   
of \cite[Section 3.4]{Halverson} on $(\id-p)^{\otimes k}H^{\otimes k}$, where the $k$-th Motzkin algebra is realised as
$\End_{U_q(\mathfrak{gl}_2)}(H^{\otimes k})$.
\item[ii)] In this case we consider a vector
  $v=\sum_{i=1}^n b_i v_i$ with 
$b_i=  b_{\bar{i}}\in\IR$ for $1\leq i\leq n$ and a matrix $A$ with
 $a_i=a_{j}$ for all $i, j\in \supp(v)$.
For example, one can choose
$v= (\sqrt{2})^{-1}v_1+ (\sqrt{2})^{-1}v_n$,  and   
any $a_1$ of absolute value $\lambda^{1/2}$. 
\item[iii)] One may also choose vectors with larger supports, such as, for any $r\leq n/2$, a solution is $b_i:=(\sum_{j=1}^r \delta_{i,j}+ \delta_{i,\bar{j}})/\sqrt{2r}$ with $a_1=\ldots =a_r=a_{\bar{1}}=\ldots =a_{\bar{r}}$ and $|a_1|=\lambda^{1/2}$.
\end{enumerate}
 \end{example}  
 It is natural to wonder whether the representations associated with the Motzkin pair are faithful or not.
In the next result, we provide an answer for suitable values of the parameter $\lambda$. 

We recall the map $E=E_k: M_k(\lambda^{-1})\to M_{k-1}(\lambda^{-1})$  from \cite{Jo21},
 defined on the level of diagrams as
\begin{equation}\label{conditionalexpectation}
\begin{tikzpicture}
\node at (-.35,0.25) {$\scalebox{1}{$E\big($}$};
\draw (0, 0) rectangle (1, 0.5);
\node at (1.55,0.25) {$\scalebox{1}{$\big) =\lambda$}$};

\draw (2.3, 0) rectangle (3.3, 0.5);

\draw (3.25,0) to[out=-90,in=-90] (3.5,0);
\draw (3.25,.5) to[out=90,in=90] (3.5,.5);
\draw (3.5,0) -- (3.5,.5);

 \node at (.5,0.25) {$\scalebox{1}{$x$}$};
\node at (2.8,0.25) {$\scalebox{1}{$x$}$};

 \end{tikzpicture}
\end{equation}
We now discuss the faithfulness of our representations of the Motzkin algebras for a suitable set of values of $\lambda$. 
For the remaining values, the algebra fails to be semisimple, which means it cannot  carry a C$^*$-algebraic structure.
For $\lambda^{-1}\geq 3$, the Motzkin algebras are finite-dimensional C$^*$-algebras and
the map $E$ is positive and faithful (meaning that  non-zero positive elements are mapped to non-zero positive elements).
For the above-mentioned results see  \cite[Theorem 5.14]{Halverson} and \cite[Theorem 3.2]{Jo21}.
\begin{proposition}
For any Motzkin pair $(v_A, v)$ with parameter $0<\lambda\leq 1/3$, the representation $\pi_{v_{A},v}$ in Definition \ref{Motzkin_pair} is faithful.
\end{proposition}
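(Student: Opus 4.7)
The plan is to argue by induction on $k\ge 1$, using the positivity and faithfulness of the conditional expectation $E_k\colon M_k(\lambda^{-1})\to M_{k-1}(\lambda^{-1})$, which hold throughout the semisimple range $\lambda^{-1}\ge 3$. The base case $k=1$ is immediate: $M_1(\lambda^{-1})\cong\IC\oplus\IC$ is generated by the self-adjoint idempotent $p_1$, whose image under $\pi_1$ is the rank-one projection $p=|v\rangle\langle v|$. Since $\|v\|=1$ and $\dim H\ge 2$, this projection is neither $0$ nor $I$, so $\pi_1$ separates the two minimal central projections of $M_1(\lambda^{-1})$ and is therefore faithful.

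For the inductive step, assume $\pi_{k-1}$ is faithful and suppose $\pi_k(x)=0$ for some $x\in M_k(\lambda^{-1})$. The heart of the argument is to establish an intertwining relation
\[
\pi_{k-1}\bigl(E_k(y)\bigr) \;=\; \Phi_k\bigl(\pi_k(y)\bigr), \qquad y\in M_k(\lambda^{-1}),
\]
for some linear map $\Phi_k\colon\pi_k(M_k(\lambda^{-1}))\to\pi_{k-1}(M_{k-1}(\lambda^{-1}))$; equivalently, that $\pi_k(y)=0$ implies $\pi_{k-1}(E_k(y))=0$. Granted this, applying the intertwining to the positive element $y=x^*x$ and using the inductive hypothesis yields $E_k(x^*x)=0$, and then the faithfulness of $E_k$ on positive elements (available because $\lambda^{-1}\ge 3$) forces $x^*x=0$, whence $x=0$.

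The main obstacle is the construction of $\Phi_k$ and the verification of the intertwining. Diagrammatically, $E_k$ closes the rightmost strand of a Motzkin diagram (picking up a factor $\lambda$, with each closed loop contributing $\lambda^{-1}$), and one expects $\Phi_k$ to arise as a partial contraction on the last tensor factor of $H^{\otimes k}=H^{\otimes(k-1)}\otimes H$ against a suitable functional on $B(H)$ built from the Motzkin-pair data $(v_A,v)$. The algebraic constraints \eqref{eqMotzkinPair} are precisely what should make the chosen contraction compatible with the Motzkin relations. The verification reduces to a case-by-case check on the generators $t_i,l_i,l_i^*,p_i$ of $M_k(\lambda^{-1})$, with the delicate cases being those in which the generator acts non-trivially at position $k$, namely $l_{k-1},l_{k-1}^*,t_{k-1}$, and $p_k$.
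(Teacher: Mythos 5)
Your overall strategy is the same as the paper's: induct on $k$, and transport the positive faithful conditional expectation $E_k\colon M_k(\lambda^{-1})\to M_{k-1}(\lambda^{-1})$ (available for $\lambda^{-1}\ge 3$) through the representation to conclude that $\pi_k(x^*x)=0$ forces $E_k(x^*x)=0$ and hence $x=0$. However, the step you yourself label ``the main obstacle'' --- the existence of a map $\Phi_k$ on $\pi_k(M_k(\lambda^{-1}))$ with $\pi_{k-1}\circ E_k=\Phi_k\circ\pi_k$ --- is exactly the content of the proof, and you do not supply it. As written, the argument is circular at that point: to know that $\Phi_k$ is well defined you must already know that $\pi_k(y)=0$ implies $\pi_{k-1}(E_k(y))=0$, which is what the intertwining is supposed to deliver. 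Your suggested route (define a partial contraction on the last tensor factor and verify compatibility generator by generator against the thirteen Motzkin relations) is not carried out, and it is not obvious that the conditions \eqref{eqMotzkinPair} alone make an ad hoc contraction work.

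The paper closes this gap differently, and more cheaply: it observes that $E_k$ is implemented \emph{algebraically inside the Motzkin algebra itself}. Concretely, for $x\in M_k(\lambda^{-1})$ the element $p_{k+1}t_k\,x\,t_k p_{k+1}\in M_{k+1}(\lambda^{-1})$ is, diagrammatically, $E_k(x)$ with two extra strands capped off by $p$'s; composing $\pi_{k+1}$ of this element with the inverses of the injective corner maps $\varphi_j(y)=y\otimes\pi_1(p_1)$ produces a map $\tilde E_k$ on $\pi_k(M_k(\lambda^{-1}))$ satisfying $\pi_{k-1}\circ E_k=\tilde E_k\circ\pi_k$ \emph{automatically}, because the identity $p_{k+1}t_k x t_k p_{k+1}=\lambda^{?}\,\iota^2(E_k(x))\,(p_k p_{k+1})$-type relation holds already in the abstract algebra and is therefore preserved by any representation. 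No generator-by-generator verification is needed. To repair your proof you should either reproduce this algebraic implementation of $E_k$, or actually construct your contraction $\Phi_k$ on all of $\CB(H^{\otimes k})$ (not just on the image of $\pi_k$) and verify the intertwining on a spanning set of Motzkin diagrams; until one of these is done, the inductive step is incomplete. Your base case, by contrast, is fine and in fact more detailed than the paper's.
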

\begin{proof}
 Denote by $\pi_k$ our representation of $M_k(\lambda^{-1})$.
 We will make use of several maps, namely
\begin{align*}
&E_k: M_k(\lambda^{-1})\to M_{k-1}(\lambda^{-1})\\
&\tilde E_k: \pi_k(M_k(\lambda^{-1}))\to \pi_{k-1}(M_{k-1}(\lambda^{-1}))\\
&\varphi_k:  \CB(H^{\otimes k})\to  \CB(H^{\otimes k+1})\,,
 \end{align*}
where 
$E_k$ is the positive and faithful map recalled above
  (here  $\lambda^{-1}\geq 3$),
 $\varphi_k(x):=x\otimes \pi_{1}(p_{1})$ is an injective map, and
 $$
 \tilde E_k(\pi_k(x)):=\varphi_{k-1}^{-1}(\varphi_k^{-1}(\pi_{k+1}(p_{k+1}t_{k}xt_{k}p_{k+1})))\,,
 $$
 which is the restriction of 
 $$ 
 y\mapsto \varphi_{k-1}^{-1}(\varphi_k^{-1}((\id^{\otimes k-1}\otimes p\otimes p)(\id^{\otimes k-1}\otimes t) (y\otimes \id)(\id^{\otimes k-1}\otimes t)(\id^{\otimes k-1}\otimes p\otimes p)).
 $$ 
from $ \CB(H^{\otimes k})$ to $ \CB(H^{\otimes k-1})$.
  The above map
  is clearly contractive and thus a conditional expectation.
 By definition we have $\pi_k\circ E_{k+1}=\tilde E_{k+1}\circ \pi_{k+1}$.
 We give a proof of the faithfulness of $\pi_k$ by induction on $k$.
 This is trivially true for $k=1$. Suppose that it is true for $k$ and let us prove it for $k+1\geq 2$.
 It suffices to show that any non-zero $x\geq 0$ in $M_{k+1}(\lambda^{-1})$ is mapped by $\pi_{k+1}$ to something different from zero.
 Now $E_{k+1}(x)\geq 0$ and different from zero. 
 $\pi_k(E_{k+1}(x))\neq 0$ by inductive hypothesis, but this is also equal to 
 $\tilde E_{k+1}(\pi_{k+1}(x))\neq 0$. In particular, $\pi_{k+1}(x)\neq 0$.
\end{proof}

 \section{The Motzkin subproduct system} \label{sec2}
 This section is devoted to introducing the main object of this paper: the Motzkin subproduct system. In order to do this, we start by recalling the notion of standard 
subproduct system    \cite[Section 1, Definition 1.1]{Shalit} for the monoid $\IZ$,  and then we present the Motzkin Jones-Wenzl idempotents 
  from \cite{Jo21}.

 \begin{definition}\label{defsub}
A  subproduct system (over the additive monoid $\IZ_+$) is a sequence of Hilbert spaces $\{H_k\}_{k=0}^\infty$ such that $H_0=\IC$, 
$\dim H_1<+\infty$, along with a family of 
co-products, namely 
isometries $w_{k,l}: H_{k+l}\to H_k\otimes H_l$ satisfying
the following co-associativity condition
$$
(w_{k,l}\otimes 1)w_{k+l,n}= (1\otimes w_{l,n})w_{k,l+n}\qquad k, l, n\in \IZ_+\, . 
$$
Here the maps $w_{0,n}$  and $w_{n,0}$ are the canonical identifications.
\end{definition}

By \cite[Lemma 6.1]{Shalit}, we do not harm generality in assuming that $H_0=\IC$, 
$H_p\subset (H_1)^{\otimes p}$,
$H_{p+q}\subset H_p\otimes H_q$,
and the isometries $w_{k,l}$ are the embedding maps. Subproduct systems realized in this way 
are known as \textbf{standard subproduct systems}.
If we consider the projections $f_n: (H_1)^{\otimes n}\to H_n\subset (H_1)^{\otimes n}$,
the co-associativity condition can be recast as
\begin{align}
&f_n\otimes 1\geq f_{n+1}\leq 1\otimes f_n \qquad \text{for all } n\in \IZ_+, \label{coass-proj}
\end{align}
as shown in  \cite[Theorem 7.2]{GerholdSkeide}.
 
Starting with a finite-dimensional Hilbert space $H$ of dimension $n$,
we can consider a standard subproduct system  with $H_1=H$.
We may then construct the associated Fock space $\CF_\CH :=\oplus_{k\geq 0} H_k$, 
 as a subspace of the full Fock space $\CF(H):=\oplus_{k\geq 0} H^{\otimes k}$.
To this aim, we fix  an orthonormal basis $\{v_i\}_{i=1}^n$ of $H_1$. 
Denote by $e_\CH$ the orthogonal projection of $\CF(H)$ onto $\CF_\CH$.
Fix an orthonormal basis $\{v_i\}_{i=1}^n$ of $H_1$.
For any $v\in H_1$, define the creation operators $T_v: \CF(H)\to \CF(H)$ and
$S_v: \CF_\CH\to \CF_\CH$ by 
\begin{equation}
\begin{aligned}
&T_v u := v \otimes u, \qquad v \in H_1, \; u \in \CF(H)\\
&T_i:=T_{v_i}\\
&S_i:= e_\CH T_i \upharpoonright_{\CF_\CH}\\
&S_i^* = e_\CH T_i^* \upharpoonright_{\CF_\CH}=T_i^* \upharpoonright_{\CF_\CH}\,.
\end{aligned}
\end{equation}

Note that 
$$
1-\sum_{i=1}^n S_iS_i^* = e_0\,,
$$ 
where $e_0$ is the orthogonal projection of  $\CF_\CH$ onto $H_0$. 
\begin{definition}
We denote by $\CT_\CH$ the unital C$^*$-algebra generated by $\{S_v$ : $v\in H_1\}$, which is known as the
 Toeplitz algebra 
$\CT_\CH$
associated with $\CH$.

As $1\in H_0$ is cyclic for $\CT_\CH$,
 the ideal of compact operators $\CK(\CF_\CH)$ is contained in  $\CT_\CH$.
The corresponding quotient 
 $\CO_\CH:=\CT_\CH/\CK(\CF_\CH)$ is the so-called  the Cuntz-Pimsner algebra.  
 \end{definition}
 In the simplest case,  $H_k=H^{\otimes k}$, $\CO_\CH$ returns the well known Cuntz algebra \cite{Cuntz}.
 Cuntz-Pimsner algebras for subproduct systems were introduced by Viselter in \cite{Viselter12}, generalizing the corresponding concept  for C$^*$-correspondences.
We will refer to this realization of $\CT_\CH$ as the canonical representation. 

 We now move to the Motzkin Jones-Wenzl idempotents $g_k$ of the Motzkin algebra.
Let us   first recall the definition of $g_k$ and prove some of their basic properties. 
\begin{definition}\cite[Section 2.4]{Jo21}
 Let $P_m(x)$, $n\geq 0$,  be the Chebyshev polynomials over $\IC$ defined as $P_0(x)=P_1(x)=1$, $P_{m+1}(x)=P_m(x)-xP_{m-1}(x)$. 
$\lambda^{-1}\in \IC$ is said to be $n$-generic if $P_k((\lambda^{-1}-1)^{-2})\neq 0$ for $1\leq k \leq n$ and generic if $P_k((\lambda^{-1}-1)^{-2})\neq 0$ for all $k\in\IN_0$.
For $\lambda^{-1}$ $n$-generic the Motzkin Jones-Wenzl idempotents $g_k\in M_k(\lambda^{-1})$, for $1\leq k \leq n-1$, are defined inductively as 
\begin{align}\label{recurrence-formula}
g_{k+1}&=g_k(1-p_{k+1})-\frac{\lambda^{-1}}{\lambda^{-1}-1} \frac{P_{k-1}((\lambda^{-1}-1)^{-2}))}{P_k((\lambda^{-1}-1)^{-2}))} g_k t_kg_k\,,
\end{align}
where $g_1=1-p_1$.
\end{definition}

In this paper, we study the subproduct system given by $H_0=\IC$ and $H_k:=g_{k}H^{\otimes k}$ for $k\geq 1$, 
 where the Motzkin algebra $M_k(\lambda^{-1})$ is thought of as being realised concretely in the representation of Proposition \ref{extended-rep}.
 From now on we restrict our analysis to $\lambda\in (0,1/3]$, where the Motzkin algebra affords a C$^*$-algebraic structure,  see \cite{Jo21}.
 The proof of the results in the present section work also for $\lambda^{-1}$ being real $n$-generic for all $n$.
 The other section, however, exploits the specific range of parameters, see e.g. Lemma \ref{lemma31}.
   \begin{proposition}\cite[Prop. 2.1]{Jo21} \label{propJY}
Let $E: M_k(\lambda^{-1})\to M_{k-1}(\lambda^{-1})$ be the map defined in \eqref{conditionalexpectation}. 
Then the Motzkin Jones-Wenzl idempotents enjoy the
 following properties
\begin{enumerate}
\item $g_k$ is the unique orthogonal projection   such that  $g_kp_i=p_ig_k=0$ for $i=1, \ldots, k-1$;
\item  $E(g_k)=\frac{(\lambda^{-1}-1) P_{k}( (\lambda^{-1}-1)^{-2})	}{\lambda^{-1}P_{k-1}((\lambda^{-1}-1)^{-2})} g_{k-1}$;
\item  $g_ig_j=g_j$ if $1\leq i\leq j\leq n$.
\end{enumerate}
\end{proposition}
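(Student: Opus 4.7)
My plan is to prove the three properties simultaneously by strong induction on $k$ from the defining recurrence
\begin{equation*}
g_{k+1} = g_k(1-p_{k+1}) - c_k\, g_k t_k g_k, \qquad c_k := \frac{\lambda^{-1}}{\lambda^{-1}-1}\cdot\frac{P_{k-1}((\lambda^{-1}-1)^{-2})}{P_k((\lambda^{-1}-1)^{-2})}.
\end{equation*}
The base case $k=1$, with $g_1 = 1-p_1$, is immediate: the orthogonality conditions in (1) are vacuous, self-adjointness and idempotency of $g_1$ are inherited from $p_1$, and the diagrammatic evaluation $E(p_1)=\lambda$ gives $E(g_1)=1-\lambda$, matching the stated formula at $k=1$. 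Once this is established, property (3) follows first in the inductive step: left-multiplying the recurrence by $g_k$ and using the inductive identity $g_k^2=g_k$ immediately yields $g_k g_{k+1} = g_{k+1}$, whence $g_i g_{k+1} = g_i g_k g_{k+1} = g_{k+1}$ for all $i\le k$.

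For property (1), the orthogonality of $g_{k+1}$ to each generator of index at most $k-1$ is automatic: by the far-commutation relations (6) such a generator slides past $p_{k+1}$ and $t_k$ and then annihilates $g_k$ by inductive hypothesis. The substantive orthogonalities are those involving $t_k$, $l_k$, and $l_k^*$; I would expand $t_k g_{k+1}$ (and $l_k g_{k+1}$, $l_k^* g_{k+1}$) via the recurrence and reduce using relations (7)--(12). The coefficient $c_k$ is designed precisely to make the two resulting terms cancel, and the cancellation is equivalent to the key quadratic identity
\begin{equation*}
g_k\, t_k g_k t_k\, g_k = \tau_k\, g_k t_k g_k, \qquad \tau_k := \frac{(\lambda^{-1}-1) P_k((\lambda^{-1}-1)^{-2})}{\lambda^{-1} P_{k-1}((\lambda^{-1}-1)^{-2})},
\end{equation*}
which is the Motzkin analogue of the Wenzl contraction from Temperley-Lieb theory and must be checked once using relations (9)--(12). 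Self-adjointness of $g_{k+1}$ reduces to the commutation $g_k p_{k+1} = p_{k+1} g_k$, which follows from the diagrammatic observation that $g_k$ acts trivially on strand $k+1$ combined with the inductive orthogonalities of $g_k$ against the lower $l$-generators; idempotency $g_{k+1}^2 = g_{k+1}$, after expansion and repeated use of $g_k^2=g_k$, collapses to the same quadratic identity that fixes $c_k$. Uniqueness is a standard semisimplicity argument: for generic $\lambda^{-1}$ the joint kernel of right-multiplication by the $t_i, l_i, l_i^*$ inside $M_k(\lambda^{-1})$ is one-dimensional.

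Finally, for property (2) I would apply $E$ to the recurrence and invoke the module property $E(xy) = x E(y)$ for $x \in \iota(M_k)\subset M_{k+1}$, obtaining
\begin{equation*}
E(g_{k+1}) = g_k\, E(1-p_{k+1}) - c_k\, g_k E(t_k)\, g_k,
\end{equation*}
which reduces to a scalar multiple of $g_k$ once the elementary diagrammatic values of $E(p_{k+1})$ and $E(t_k)$ are substituted and $g_k^2=g_k$ is invoked. Matching the resulting scalar against the target $\frac{(\lambda^{-1}-1) P_{k+1}((\lambda^{-1}-1)^{-2})}{\lambda^{-1} P_k((\lambda^{-1}-1)^{-2})}$ amounts exactly to the Chebyshev recursion $P_{k+1}(x) = P_k(x) - x P_{k-1}(x)$ at $x = (\lambda^{-1}-1)^{-2}$, which is built into the very definition of $c_k$ and is therefore self-validating. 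The main obstacle I anticipate is not conceptual but combinatorial: carefully tracking the Motzkin-specific scalars that arise from the $l$-generators in the reduction establishing the quadratic identity for $g_k t_k g_k t_k g_k$, and verifying that relations (9)--(12) produce exactly the Temperley-Lieb scalars with no additional corrections.
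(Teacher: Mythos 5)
The paper never proves this proposition: it is imported verbatim from Jones--Yang (the citation \cite{Jo21} in the statement \emph{is} the proof), so there is no in-paper argument to compare yours against. Your outline is the standard Wenzl induction, and most of it is sound. Part (3) by left-multiplying the recurrence by $g_k$, part (2) by applying $E$ to the recurrence (using $E(1)=1$, $E(p_{k+1})=\lambda$, $E(t_k)=\lambda^2$, the module property of $E$, and the Chebyshev recursion $P_{k+1}=P_k-xP_{k-1}$ at $x=(\lambda^{-1}-1)^{-2}$), and the reduction of idempotency and of $t_kg_{k+1}=0$ to the contraction $g_kt_kg_kt_kg_k=c_k^{-1}\,g_kt_kg_k$ are all correct; that contraction is exactly the identity the paper re-derives diagrammatically in Lemma \ref{lemma_dimension}(1).

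The gap is in the genuinely Motzkin-specific orthogonalities. You assert that the cancellations for $l_kg_{k+1}$ and $l_k^*g_{k+1}$ are ``equivalent to the key quadratic identity''; they are not --- that identity involves only $t_k$ and cannot see the $l$-generators. These orthogonalities rest on a different (easier) mechanism: the diagrammatic identities $l_k=l_kp_k=p_{k+1}l_k$, $p_k=l_{k-1}l_{k-1}^*$ and $t_kp_{k+1}=t_kp_{k+1}p_k$. For instance, $g_{k+1}p_{k+1}=-c_k\,g_kt_kp_{k+1}g_k=-c_k\,g_kt_kp_{k+1}p_kg_k=0$ because $p_kg_k=l_{k-1}(l_{k-1}^*g_k)=0$ by induction; then $g_{k+1}l_k=g_{k+1}p_{k+1}l_k=0$, and $l_kg_{k+1}=l_kp_kg_{k+1}$ vanishes symmetrically (with $p_1=l_1^*l_1$ used directly in the base case). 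You flag the $l$-generators as the anticipated obstacle, but identifying the wrong identity as the one that resolves them leaves part (1) --- the part the rest of the paper actually leans on, e.g.\ in Proposition \ref{description_ideals} --- unproved. The uniqueness claim is also under-argued: the clean statement is that any projection orthogonal to all the generators and congruent to $1$ modulo the ideal they generate equals $g_k$ (multiply the two candidates together); the ``one-dimensional joint annihilator'' you invoke instead is true for generic $\lambda^{-1}$ but is itself something that must be proved.
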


\begin{remark}
Denote by $\sigma: M_k(\lambda^{-1})\to M_k(\lambda^{-1})$ the automorphism defined on the level of diagrams as the reflection about a vertical line. 
By Proposition \ref{propJY}-(1), it follows that $\sigma(g_k)=g_k$ for all $k$. Therefore, the following recursive formula holds 
\begin{equation}\label{symmetric-recurrence-formula}
\begin{aligned}
g_{k+1}&=(\id \otimes g_k)((\id -p_{1})\otimes \id^{\otimes k})\\
&\qquad -\frac{\lambda^{-1}}{\lambda^{-1}-1} \frac{P_{k-1}((\lambda^{-1}-1)^{-2}))}{P_k((\lambda^{-1}-1)^{-2}))} (\id \otimes g_k) t_{1} (\id \otimes g_k)\,,
\end{aligned}
\end{equation}
where we used   $\sigma(g_k \otimes \id)=\id \otimes g_k$, $\sigma(t_k)=t_{1}$, $\sigma(\id^{\otimes k+1}-p_n)=(\id-p_1)\id^{\otimes k}$.
\end{remark}

\begin{lemma}\label{prop-gk}
One has  $g_{p+q}(\id^{\otimes p} \otimes g_q)=g_{p+q}(g_q\otimes \id^{\otimes p})=g_{p+q}$ if $1\leq p, q$.
\end{lemma}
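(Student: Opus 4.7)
The plan is to prove the identity by induction on $q\geq 1$, with $p\geq 1$ arbitrary.

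For the base case $q=1$, start from $g_1=\id-p_1$. Under the horizontal juxtaposition $M_p\otimes M_1\hookrightarrow M_{p+1}$, the element $\id^{\otimes p}\otimes p_1$ is the diagram with $p$ vertical strands and an isolated pair on strand $p+1$; this coincides with $l_pl_p^*$, i.e.\ with $p_{p+1}\in M_{p+1}(\lambda^{-1})$. Therefore $\id^{\otimes p}\otimes g_1=\id-p_{p+1}$, and the claim reduces to $g_{p+1}p_{p+1}=g_{p+1}l_pl_p^*=0$, which follows immediately from $g_{p+1}l_p=0$ in Proposition~\ref{propJY}-(1).

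For the inductive step, assume the statement for $q$ and all $p\geq 1$. I would apply the recurrence~\eqref{recurrence-formula} to $g_{q+1}$ and juxtapose on the left with $\id^{\otimes p}$; inside $M_{p+q+1}(\lambda^{-1})$ this reads
\[
\id^{\otimes p}\otimes g_{q+1}=(\id^{\otimes p}\otimes g_q\otimes\id)(\id-p_{p+q+1})-c_q\,(\id^{\otimes p}\otimes g_q\otimes\id)\,t_{p+q}\,(\id^{\otimes p}\otimes g_q\otimes\id),
\]
with $c_q$ the scalar appearing in~\eqref{recurrence-formula}. Pre-multiplying by $g_{p+q+1}$, the key auxiliary identity is
\[
g_{p+q+1}\,(\id^{\otimes p}\otimes g_q\otimes\id)=g_{p+q+1}.
\]
To establish it, I would apply $\iota$ to the inductive hypothesis $g_{p+q}(\id^{\otimes p}\otimes g_q)=g_{p+q}$ in $M_{p+q}(\lambda^{-1})$, getting $(g_{p+q}\otimes\id)(\id^{\otimes p}\otimes g_q\otimes\id)=g_{p+q}\otimes\id$ in $M_{p+q+1}(\lambda^{-1})$; pre-multiplication by $g_{p+q+1}$, together with the absorption identity $g_{p+q+1}(g_{p+q}\otimes\id)=g_{p+q+1}$ coming from Proposition~\ref{propJY}-(3) and self-adjointness of the $g_k$'s, then yields the claim.

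With this in hand the second term of the recurrence vanishes because $g_{p+q+1}t_{p+q}=0$, while the first collapses to $g_{p+q+1}(\id-p_{p+q+1})=g_{p+q+1}$ since $g_{p+q+1}p_{p+q+1}=g_{p+q+1}l_{p+q}l_{p+q}^*=0$; all three vanishings are direct consequences of Proposition~\ref{propJY}-(1). The only non-routine point is the bookkeeping around horizontal juxtaposition and $\iota$: one has to keep track of the fact that $\id^{\otimes p}\otimes g_q\in M_{p+q}$ embeds into $M_{p+q+1}$ as $\id^{\otimes p}\otimes g_q\otimes\id$, and that the generators shift their indices by $p$ under left juxtaposition by $\id^{\otimes p}$. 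Once this is made explicit, the argument is essentially a direct unwinding of~\eqref{recurrence-formula} and Proposition~\ref{propJY}.
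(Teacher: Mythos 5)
Your proof is correct, but it takes a genuinely different route from the paper's. The paper disposes of the lemma in two lines: it applies the vertical-reflection automorphism $\sigma$ of $M_{p+q}(\lambda^{-1})$, which fixes $g_{p+q}$ (as noted in the Remark preceding the lemma, via the uniqueness characterization in Proposition \ref{propJY}-(1)) and carries $\id^{\otimes p}\otimes g_q$ to $g_q\otimes\id^{\otimes p}$, thereby converting the desired right-absorption identity into the left-absorption identity $g_{p+q}(g_q\otimes\id^{\otimes p})=g_{p+q}$ furnished by Proposition \ref{propJY}-(3). You instead run a self-contained induction on $q$, unwinding the recurrence \eqref{recurrence-formula} and killing each term with the orthogonality relations $g_{k}l_{k-1}=0$ and $g_{k}t_{k-1}=0$ from Proposition \ref{propJY}-(1), together with the absorption $g_{p+q+1}(g_{p+q}\otimes\id)=g_{p+q+1}$. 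Your base case ($\id^{\otimes p}\otimes p_1=l_pl_p^*=p_{p+1}$), your index bookkeeping under left juxtaposition, and the auxiliary identity $g_{p+q+1}(\id^{\otimes p}\otimes g_q\otimes\id)=g_{p+q+1}$ all check out. What the paper's argument buys is brevity and a clean conceptual reason (the left/right symmetry of the Jones--Wenzl idempotents); what yours buys is independence from the automorphism $\sigma$ and from the symmetrized recurrence \eqref{symmetric-recurrence-formula} — it works directly from the defining recursion, at the cost of more bookkeeping. Both ultimately rest on the same two ingredients, Proposition \ref{propJY}-(1) and (3).
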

\begin{proof} 
Consider the automorphism $\sigma: M_{p+q}(\lambda^{-1})\to M_{p+q}(\lambda^{-1})$
mentioned above.
Then the claim follows from Proposition \ref{propJY}-(3).
\end{proof}
We are now in a position to prove that our family of Hilbert spaces is indeed a standard product system.
\begin{proposition}
The family of Hilbert spaces 
 $H_k:=g_{k}H^{\otimes k}$
is a standard subproduct system. 
\end{proposition}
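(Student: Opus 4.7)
The plan is to verify the two defining conditions for a standard subproduct system, namely $H_0 = \IC$ and $H_{p+q} \subset H_p \otimes H_q$ for all $p, q \geq 0$. The former is set by convention (with $g_0$ understood as the identity on $\IC$), so the substantive content is the inclusion. Since each $H_k = g_k H^{\otimes k}$ is the range of the orthogonal projection $g_k \in \CB(H^{\otimes k})$, the inclusion $H_{p+q} \subset H_p \otimes H_q$ viewed inside $H^{\otimes(p+q)}$ is equivalent to the operator identity
\[
(g_p \otimes g_q)\, g_{p+q} = g_{p+q}
\]
inside $\CB(H^{\otimes(p+q)})$; this will be my main target.

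To establish it, I would split the left factor as $g_p \otimes g_q = (g_p \otimes \id^{\otimes q})(\id^{\otimes p} \otimes g_q)$ and invoke the two absorption results from the preceding Proposition and Lemma. First, Lemma \ref{prop-gk} gives $g_{p+q}(\id^{\otimes p} \otimes g_q) = g_{p+q}$; since both $g_{p+q}$ and $\id^{\otimes p} \otimes g_q$ are self-adjoint projections, taking adjoints yields
\[
(\id^{\otimes p} \otimes g_q)\, g_{p+q} = g_{p+q}.
\]
Second, Proposition \ref{propJY}-(3) states $g_p g_{p+q} = g_{p+q}$; under the inclusion convention $\iota^{q}: M_p(\lambda^{-1}) \hookrightarrow M_{p+q}(\lambda^{-1})$, which graphically adjoins $q$ vertical strands on the right and hence sends $g_p$ to $g_p \otimes \id^{\otimes q}$, this reads
\[
(g_p \otimes \id^{\otimes q})\, g_{p+q} = g_{p+q}.
\]

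Chaining the two identities I would then conclude
\[
(g_p \otimes g_q)\, g_{p+q} = (g_p \otimes \id^{\otimes q})(\id^{\otimes p} \otimes g_q)\, g_{p+q} = (g_p \otimes \id^{\otimes q})\, g_{p+q} = g_{p+q},
\]
which is the required inclusion. The boundary cases $p=0$ or $q=0$ are tautological under the convention $H_0 = \IC$. I do not anticipate a real obstacle here: the substantive work has already been packaged into Lemma \ref{prop-gk} and Proposition \ref{propJY}, and the proof reduces to the concatenation of a ``right absorption'' and a ``left absorption'' identity, together with the observation that the diagrammatic inclusion $\iota$ of Motzkin algebras corresponds precisely to tensoring by the identity on the added strands.
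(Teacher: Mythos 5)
Your proof is correct and follows essentially the same route as the paper: both reduce the inclusion $H_{p+q}\subset H_p\otimes H_q$ to the operator identity $g_{p+q}=(g_p\otimes\id^{\otimes q})(\id^{\otimes p}\otimes g_q)g_{p+q}$, obtained by combining Proposition \ref{propJY}-(3) with Lemma \ref{prop-gk}. Your extra care with the adjoint of the absorption identity and the interpretation of the inclusion $\iota$ as tensoring with identity strands only makes explicit what the paper leaves implicit.
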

\begin{proof}
By Proposition \ref{propJY}-(3)  and Lemma \ref{prop-gk} we have
 $$
 g_{p+q}=(g_{p}\otimes \id^{\otimes q}) (\id^{\otimes p}\otimes g_q)g_{p+q}\,.
 $$
In particular, one has $g_{p+q}\leq (g_{p}\otimes \id^{\otimes q}) (\id^{\otimes p}\otimes g_q)$ and
\begin{align*}
 g_{p+q}H^{\otimes p+q}&\subset (g_{p}\otimes \id^{\otimes q}) (\id^{\otimes p}\otimes g_q) (H^{\otimes p+q})
 =  g_{p}H^{\otimes p}\otimes   g_qH^{\otimes q}\,.
\end{align*}
We note that this inclusion also implies   $H_n\subset (H_1)^{\otimes n}$.
\end{proof}
\begin{remark}
Since $H_k\subset H_1^{\otimes k}\subset H^{\otimes k}$, if we consider the product system $H_k:=g_k H_1^{\otimes k}$ we obtain the same Hilbert spaces.
From now on, we will work with them. 
\end{remark}
Set
$$
[n]_q = \left\{
\begin{array}{ll}
\frac{q^n - q^{-n}}{q - q^{-1}} & \text{if } q \in (0,1), \\
n & \text{if } q = 1.
\end{array}
\right.
$$

\begin{lemma}\label{lemma_dimension}
For $\dim H=n$,
one has
\begin{enumerate}
\item  $q_k:=\frac{\lambda^{-1}}{\lambda^{-1}-1} \frac{P_{k-1}((\lambda^{-1}-1)^{-2}))}{P_k((\lambda^{-1}-1)^{-2}))} g_k t_kg_k$ is a projection,
\item  $\frac{\lambda^{-1}}{\lambda^{-1}-1} \frac{P_{k-1}((\lambda^{-1}-1)^{-2}))}{P_k((\lambda^{-1}-1)^{-2}))} g_k t_kg_k$ is equivalent to $g_{k-1}p_kp_{k+1}$,
\item $\dim H_{k+1}=(n-1)\dim H_k - \dim H_{k-1}$, with $\dim H_1= n-1$,
\item $\dim H_k = [k+1]_t$, where $t\in (0,1]$ is such that $t+t^{-1}=\dim H_1=n-1$. 
\end{enumerate}
\end{lemma}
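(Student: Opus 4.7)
The plan is to read off (1) from the recurrence \eqref{recurrence-formula} defining $g_{k+1}$, to establish the Murray--von Neumann equivalence in (2) via an explicit partial isometry, to obtain (3) by a rank computation, and to recognize (4) as a Chebyshev-type identity.

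First, for (1): viewed inside $M_{k+1}(\lambda^{-1})$ through the natural inclusion $\iota$, the idempotent $g_k$ acts only on the first $k$ tensor factors, whereas $p_{k+1}$ acts only on the last factor, so $g_k$ and $p_{k+1}$ commute and $g_k(1-p_{k+1})$ is a projection. Proposition \ref{propJY}(3) applied with $(i,j)=(k,k+1)$ gives $g_k g_{k+1}=g_{k+1}$, i.e.\ $g_{k+1}\leq g_k$; Proposition \ref{propJY}(1) applied to $g_{k+1}$ gives $g_{k+1}l_k=0$, so $g_{k+1}p_{k+1}=g_{k+1}l_k l_k^*=0$, i.e.\ $g_{k+1}\leq 1-p_{k+1}$. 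Combining, $g_{k+1}\leq g_k(1-p_{k+1})$, and \eqref{recurrence-formula} then expresses $q_k$ as the difference of a projection and a subprojection, hence $q_k$ is itself a projection.

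Next, for (2): set $c_k:=\frac{\lambda^{-1}}{\lambda^{-1}-1}\frac{P_{k-1}((\lambda^{-1}-1)^{-2})}{P_k((\lambda^{-1}-1)^{-2})}$ and $u:=\sqrt{c_k}\,g_k t_k$. Since $t_k$ is a self-adjoint idempotent by relations (0) and (7), one computes $uu^*=c_k\,g_k t_k^2 g_k=q_k$, so $q_k\sim u^*u=c_k\,t_k g_k t_k$. What remains is the identification $c_k\,t_k g_k t_k=g_{k-1}p_k p_{k+1}$. I expect this to be the main obstacle: it requires a careful diagrammatic computation that exploits the orthogonality of $g_k$ to all $t_i,l_i,l_i^*$ with $i<k$ (Proposition \ref{propJY}(1)) together with the ``closing'' relations (9)--(12) to collapse the outer pair of $t_k$'s around $g_k$ into the two rank-one projections $p_k p_{k+1}$, leaving $g_{k-1}$ on the first $k-1$ strands.

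For (3), take ranks in $\CB(H^{\otimes k+1})$. Since $\iota(g_k)=g_k\otimes\id_H$ and $p_{k+1}=\id_{H^{\otimes k}}\otimes p$ with $p$ rank one, $\mathrm{rk}(g_k)=n\dim H_k$ and $\mathrm{rk}(g_k p_{k+1})=\dim H_k$. From (2), $\mathrm{rk}(q_k)=\mathrm{rk}(g_{k-1}p_k p_{k+1})=\dim H_{k-1}$. Reading ranks off $g_{k+1}=g_k(1-p_{k+1})-q_k$, with the three projections comparable by the proof of (1), yields $\dim H_{k+1}=(n-1)\dim H_k-\dim H_{k-1}$. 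The base case is $\dim H_1=\mathrm{rk}(\id_H-p_1)=n-1$. Finally, (4) reduces to the standard identity $[m+1]_t=(t+t^{-1})[m]_t-[m-1]_t$, obtained by expanding the definition of the quantum integer. Since $[1]_t=1=\dim H_0$ and $[2]_t=t+t^{-1}=n-1=\dim H_1$ by the choice of $t$, an induction using (3) yields $\dim H_k=[k+1]_t$.
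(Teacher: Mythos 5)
Your parts (1), (3) and (4) are fine — indeed (1) is a cleaner route than the paper's, which instead verifies $q_k^2=q_k$ directly from the diagrammatic computation of $g_kt_kg_kt_kg_k$ via $E(g_k)$. The genuine problem is in (2), precisely at the step you flag as "the main obstacle": the identity you are hoping for, $c_k\,t_kg_kt_k=g_{k-1}p_kp_{k+1}$, is false, and the heuristic behind it (that the outer $t_k$'s "collapse into $p_kp_{k+1}$") cannot work. Conjugation by $t_k$ always leaves a multiple of $t_k$ on the last two strands: the Markov/pull-through property gives $t_k\,\iota(z)\,t_k=E(z)\,t_k$ for $z\in M_k(\lambda^{-1})$, so by Proposition \ref{propJY}-(2)
\begin{equation*}
c_k\,t_kg_kt_k \;=\; c_k\,E(g_k)\,t_k \;=\; g_{k-1}t_k .
\end{equation*}
On the last two tensor factors this is the rank-one projection $t$ onto $\IC v_A$, not $p\otimes p$ onto $\IC(v\otimes v)$, so it does not equal $g_{k-1}p_kp_{k+1}$.

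Your argument is salvageable with one extra step: $g_{k-1}t_k$ and $g_{k-1}p_kp_{k+1}$ are equivalent in $\CB(H^{\otimes k+1})$ via the partial isometry $g_{k-1}\otimes w$, where $w$ maps $\IC v_A$ onto $\IC(v\otimes v)$; in particular both have rank $\dim H_{k-1}$, so your rank count in (3) is unaffected. The paper instead gets the equality on the nose by choosing a different, genuinely Motzkin partial isometry: $x=\sqrt{c_k}\,g_k y$, where $y$ is $\sqrt{\lambda}$ times the diagram with $k-1$ through-strands, an arc joining the top points $k$ and $k+1$, and the bottom points $k,k+1$ isolated. Then $yy^*=t_k$ (so $xx^*=q_k$ as in your computation), but $y^*y=p_kp_{k+1}$, and closing the last strand of $g_k$ in $y^*g_ky$ produces $\lambda^{-1}E(g_k)$ juxtaposed with isolated points, whence $x^*x=g_{k-1}p_kp_{k+1}$ exactly. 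The isolated-point (non-Temperley-Lieb) half-diagram is what lets one trade the $v_A$-projection for the $v\otimes v$-projection inside the algebra.
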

\begin{proof}
(1) By induction it is easy to see that all the coefficients of $P_k(x)$ are real and thus $q_k$ is self-adjoint.
We show that $q_k$ is also idempotent
\begin{align*}
q_kq_k&= \left( \frac{\lambda^{-1}}{\lambda^{-1}-1} \frac{P_{k-1}((\lambda^{-1}-1)^{-2}))}{P_k((\lambda^{-1}-1)^{-2}))} \right)^2 g_k t_kg_k    g_k t_kg_k  \\
&= \left( \frac{\lambda^{-1}}{\lambda^{-1}-1} \frac{P_{k-1}((\lambda^{-1}-1)^{-2}))}{P_k((\lambda^{-1}-1)^{-2}))} \right)^2 g_k t_kg_k      t_kg_k\,.
\end{align*}
By multiplying the factors in $g_kt_kg_kt_kg_k$, one recognises that in the middle there is a multiple of $E(g_k)$ and that was calculated in Proposition \ref{propJY}-(2), then one may replace $g_kg_{k-1}$ by $g_k$ thanks to  Proposition \ref{propJY}-(3)
\[
\begin{tikzpicture} 
\node at (0.5,0.25) {$\scalebox{1}{$g_kt_kg_kt_kg_k=\lambda^2$}$};

\draw (2.3, 1) rectangle (3.3, 1.5);
\node at (2.75,1.2) {$\scalebox{.85}{$g_k$}$};
\draw (3.5,1) -- (3.5,1.7);
\draw (2.4,1.5) -- (2.4,1.7);
\draw (3.1,1.5) -- (3.1,1.7);

\draw (3.25,1) to[out=-90,in=-90] (3.5,1);
\draw (2.4,1) -- (2.4,.5);
\draw (3.1,1) -- (3.1,.5);
\node at (2.75,.75) {$\scalebox{.85}{$\ldots$}$};
 \draw (3.25,.5) to[out=90,in=90] (3.5,.5);

\draw (2.3, 0) rectangle (3.3, 0.5);
\node at (2.75,.2) {$\scalebox{.85}{$g_k$}$};
\draw (3.5,0) -- (3.5,.5);

\draw (3.25,0) to[out=-90,in=-90] (3.5,0);
\draw (3.25,-0.5) to[out=90,in=90] (3.5,-0.5);
\draw (2.4,0) -- (2.4,-.5);
\draw (3.1,0) -- (3.1,-.5);
\node at (2.75,-.25) {$\scalebox{.85}{$\ldots$}$};

\draw (2.3, -0.5) rectangle (3.3, -1);
\node at (2.75,.-.75) {$\scalebox{.85}{$g_k$}$};
\draw (3.5,-1.2) -- (3.5,-.5);
\draw (2.4,-1) -- (2.4,-1.2);
\draw (3.1,-1) -- (3.1,-1.2);

\draw[dashed] (2.2, -0.15) rectangle (3.7, 0.65);
\node at (4,0.25) {$\scalebox{1}{$=$}$};

 \end{tikzpicture}
\begin{tikzpicture} 
\node at (0,0.25) {$\scalebox{1}{$\frac{(\lambda^{-1}-1) P_{k}( (\lambda^{-1}-1)^{-2})	\lambda}{\lambda^{-1}P_{k-1}((\lambda^{-1}-1)^{-2})} $}$};

\draw (2.3, 1) rectangle (3.3, 1.5);
\node at (2.75,1.2) {$\scalebox{.85}{$g_k$}$};
\draw (3.5,1) -- (3.5,1.7);
\draw (2.4,1.5) -- (2.4,1.7);
\draw (3.1,1.5) -- (3.1,1.7);

\draw (3.25,1) to[out=-90,in=-90] (3.5,1);
\draw (2.4,1) -- (2.4,.5);
\draw (3.1,1) -- (3.1,.5);
\node at (2.75,.75) {$\scalebox{.85}{$\ldots$}$};

\draw (2.3, 0) rectangle (3.0, 0.5);
\node at (2.65,.2) {$\scalebox{.85}{$g_{k-1}$}$};
\draw (3.1,0) -- (3.1,.5);
\draw (3.25,0) -- (3.25,.5);
\draw (3.5,0) -- (3.5,.5);

\draw (3.25,-0.5) to[out=90,in=90] (3.5,-0.5);
\draw (2.4,0) -- (2.4,-.5);
\draw (3.1,0) -- (3.1,-.5);
\node at (2.75,-.25) {$\scalebox{.85}{$\ldots$}$};

\draw (2.3, -0.5) rectangle (3.3, -1);
\node at (2.75,.-.75) {$\scalebox{.85}{$g_k$}$};
\draw (3.5,-1.2) -- (3.5,-.5);
\draw (2.4,-1) -- (2.4,-1.2);
\draw (3.1,-1) -- (3.1,-1.2);

 \end{tikzpicture}
\]
\[
\begin{tikzpicture} 
\node at (0,0.25) {$\scalebox{1}{$=\frac{(\lambda^{-1}-1) P_{k}( (\lambda^{-1}-1)^{-2})	 }{\lambda^{-1} P_{k-1}((\lambda^{-1}-1)^{-2})} \lambda$}$};

\draw (2.3, 1) rectangle (3.3, 1.5);
\node at (2.75,1.2) {$\scalebox{.85}{$g_k$}$};
\draw (3.5,1) -- (3.5,1.7);
\draw (2.4,1.5) -- (2.4,1.7);
\draw (3.1,1.5) -- (3.1,1.7);

\draw (2.4,1) -- (2.4,.5);
\draw (3.1,1) -- (3.1,.5);
\node at (2.75,.75) {$\scalebox{.85}{$\ldots$}$};
 
\draw (3.25,-0.2) -- (3.25,1);
\draw (3.5,-.2) -- (3.5,1);
\draw (3.25,-.2) to[out=-90,in=-90] (3.5,-.2);

\draw[dashed] (2.2, -0.1) rectangle (3.7, 1.6);

\draw (2.3, 0) rectangle (3.0, 0.5);
\node at (2.65,.2) {$\scalebox{.85}{$g_{k-1}$}$};
\draw (3.1,0) -- (3.1,.5);

\draw (3.25,-0.5) to[out=90,in=90] (3.5,-0.5);
\draw (2.4,0) -- (2.4,-.5);
\draw (3.1,0) -- (3.1,-.5);
\node at (2.75,-.25) {$\scalebox{.85}{$\ldots$}$};

\draw (2.3, -0.5) rectangle (3.3, -1);
\node at (2.75,.-.75) {$\scalebox{.85}{$g_k$}$};
\draw (3.5,-1.2) -- (3.5,-.5);
\draw (2.4,-1) -- (2.4,-1.2);
\draw (3.1,-1) -- (3.1,-1.2);

 \end{tikzpicture}
\]
which yield
$\frac{(\lambda^{-1}-1) P_{k}( (\lambda^{-1}-1)^{-2})	}{\lambda^{-1}P_{k-1}((\lambda^{-1}-1)^{-2})} g_kt_kg_k$.

(2)
Take $x:=\sqrt{\frac{\lambda^{-1}}{\lambda^{-1}-1} \frac{P_{k-1}((\lambda^{-1}-1)^{-2}))}{P_k((\lambda^{-1}-1)^{-2}))}} g_k y$, where 
\[
\begin{tikzpicture}

\draw (3.25,0) to[out=-90,in=-90] (3.5,0);
\draw (2.4,0) -- (2.4,-.5);
\draw (3.1,0) -- (3.1,-.5);
\node at (1.35,-0.25) {$\scalebox{.85}{$y=\sqrt{\lambda}$}$};
\node at (2.75,-.25) {$\scalebox{.85}{$\ldots$}$};

 \end{tikzpicture}
\]
Clearly, one has $xx^*=q_k$. By using Proposition \ref{propJY}-(2), one also has
\[
\begin{tikzpicture} 
\node at (-.75,0.25) {$\scalebox{1}{$x^*x=\frac{\lambda^{-1}}{\lambda^{-1}-1} \frac{P_{k-1}((\lambda^{-1}-1)^{-2}))\lambda}{P_k((\lambda^{-1}-1)^{-2}))}$}$};



 \draw (2.4,1) -- (2.4,.5);
\draw (3.1,1) -- (3.1,.5);
\node at (2.75,.75) {$\scalebox{.85}{$\ldots$}$};
 \draw (3.25,.5) to[out=90,in=90] (3.5,.5);

\draw (2.3, 0) rectangle (3.3, 0.5);
\node at (2.75,.2) {$\scalebox{.85}{$g_k$}$};
\draw (3.5,0) -- (3.5,.5);

\draw (3.25,0) to[out=-90,in=-90] (3.5,0);
 \draw (2.4,0) -- (2.4,-.5);
\draw (3.1,0) -- (3.1,-.5);
\node at (2.75,-.25) {$\scalebox{.85}{$\ldots$}$};
 
\draw[dashed] (2.2, -0.15) rectangle (3.7, 0.65);
\node at (4,0.25) {$\scalebox{1}{$=$}$};

 \end{tikzpicture}
\]
\begin{align*}
&=\frac{\lambda^{-1}}{\lambda^{-1}-1} \frac{P_{k-1}((\lambda^{-1}-1)^{-2}))\lambda}{P_k((\lambda^{-1}-1)^{-2}))}\cdot \frac{(\lambda^{-1}-1) P_{k}( (\lambda^{-1}-1)^{-2})	}{\lambda^{-1}P_{k-1}((\lambda^{-1}-1)^{-2})} g_{k-1}  p_{k+1} p_k \\
& =g_{k-1} p_{k+1} p_k\,,
\end{align*}
where in the last step we used that the elements belong to $M_{k+1}(\lambda^{-1})$.

(3) Note that $g_{k-1}p_kp_{k+1} H^{\otimes k+1}= g_{k-1} H^{\otimes k-1} \otimes \IC v \otimes \IC v$ and
$$
g_k H^{\otimes k+1} = (g_k H^{\otimes k})\otimes H \qquad\qquad  g_k p_{k+1} H^{\otimes k+1} = (g_k H^{\otimes k})\otimes \IC v\,.
$$
By (1), (2), and \eqref{recurrence-formula} one gets
$$
\dim H_{k+1}=(n-1)\dim H_k - \dim H_{k-1}\,.
$$
For the dimension of $H_1=g_1 H= (1-p)H$, one has 
$$
\dim H_1 = \dim H - \dim pH=\dim H -1\,,
$$
and in particular $\dim g_1 H = n-1$.

(4) Our claim follows from \cite[Lemma 1.6]{Nesh}, where the product system satisfies the equation
$\dim H_{k+1} =\dim H_1 \dim H_k- \dim H_{k-1}$, with $\dim H_1=n-1$.
There it was proved that the solution for that equation is $[k+1]_t$ where $t\in (0,1]$ is such that $t+t^{-1}=\dim H_1$.
\end{proof}

In general, it is   possible to give an equivalent characterization of standard subproduct system with $H_1=H$ that we now recall. 
Fix a basis $\{v_i\}_{i=1}^n$ of $H$ and identify the tensor algebra $T(H)$
with the algebra of non-commutative polynomials $\IC\langle X_1, \ldots, X_n\rangle$.
By \cite[Prop. 7.2]{Shalit} there is a bijection between standard subproduct systems with  
$H_1=H$ and homogeneous ideals of $\IC\langle X_1, \ldots, X_n\rangle$ provided by the map $I\to \{H_k\}_{k\geq 0}$, where $H_k:=I_k^\bot\subset H^{\otimes k}$.

Given a Motzkin pair $(v_A, v)$, we recall from Definition \ref{defsupport} that $\supp(v)$ is the set of $i$'s such that $\langle v_i, v\rangle \neq 0$.
We denote by $[n]:=\{1, \ldots, n\}$.
\begin{proposition}\label{description_ideals}
Let $H$ be an $n$-dimensional Hilbert space, 
$(v_A,v)$ be a Motzkin pair and denote by $J:=\supp(v)$. $J$ has at least one element $j_0$. 
Define the subspace
 $H_J:=\{u\in H \, : \, \langle v_i, u\rangle = 0 \, \, \, \forall i \in J\}$
and choose an orthonormal basis $\{w_j\}_{j\in J\setminus \{j_0\}}$ of $v^\bot \cap H_J^\bot$ 
with $\supp(w_j)\subseteq J$ for all $j\in J\setminus \{j_0\}$.
For   $i\in J\setminus \{j_0\}$, write
  $(\id-p)v_i=\sum_{j\in J\setminus \{j_0\}} c_{ji} w_j$. 
Then our given subproduct system is the one arising from the ideal 
$$
\langle \sum_{i\in [n]\setminus J} a_i X_i X_{\bar i}+ \sum_{i\in J, j, j'\in J\setminus \{j_0\}} a_{i}c_{j,i} c_{j', \bar{i}} Y_{j}Y_{j'} \rangle
$$
where $X_i=v_i$ and $Y_j=w_j$ for $i\in [n]\setminus J$, $j\in J\setminus \{j_0\}$.
\end{proposition}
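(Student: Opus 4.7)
By \cite[Prop.~7.2]{Shalit} the correspondence between standard subproduct systems with degree-one space $H_1$ and homogeneous ideals of $T(H_1)$ reduces the claim to identifying the ideal $I=\bigoplus_k I_k$, $I_k:=H_k^\perp\subset H_1^{\otimes k}$, as the principal ideal $\langle P\rangle$.

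For the degree-two piece I would specialise the recurrence \eqref{recurrence-formula} to $k=1$ (where $P_0=P_1=1$), obtaining
\[
g_2 \;=\; g_1\otimes g_1 \;-\; \tfrac{1}{1-\lambda}\,(g_1\otimes\id)\,t_1\,(g_1\otimes\id).
\]
Hence $(g_1\otimes g_1)-g_2$ is a rank-one projection whose range lies in $H_1\otimes H_1$ and is spanned by $(g_1\otimes\id)v_A$; the containment in $H_1\otimes H_1$ forces $(g_1\otimes\id)v_A=(g_1\otimes g_1)v_A$. A direct computation using $g_1 v_i=v_i$ for $i\notin J$ and $g_1 v_i=\sum_{j\in J\setminus\{j_0\}}c_{j,i}w_j$ for $i\in J$ shows $(g_1\otimes g_1)v_A=P$ exactly as in the statement, so $I_2=\IC P$ and $H_2=(\IC P)^\perp=\ker(t|_{H_1\otimes H_1})$ inside $H_1\otimes H_1$.

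To propagate this to higher degrees, note that $\langle P\rangle_k=\sum_{p+q=k-2}H_1^{\otimes p}\otimes\IC P\otimes H_1^{\otimes q}$, so its orthogonal complement in $H_1^{\otimes k}$ is
\[
V_k \;:=\; \bigcap_{i=1}^{k-1}H_1^{\otimes(i-1)}\otimes H_2\otimes H_1^{\otimes(k-i-1)} \;=\; H_1^{\otimes k}\cap\bigcap_{i=1}^{k-1}\ker t_i,
\]
and it suffices to prove $g_k H_1^{\otimes k}=V_k$. The inclusion $\subseteq$ is immediate from Proposition~\ref{propJY}(1): $g_k\le g_1^{\otimes k}$ and $g_k t_i=0$. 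For $\supseteq$, take $\xi\in V_k$ and expand $g_k$ in the Motzkin-diagram basis of $M_k(\lambda^{-1})$. Every non-identity diagram is a product of generators $t_i,l_i,l_i^*$; the rightmost factor of any such product annihilates $\xi$, since $t_i\xi=0$ (as $\xi\in\ker t_i$) while $l_i\xi=l_i^*\xi=0$ (because $l$ and $l^*$ factor through the projection $p$ onto $\IC v$ and $\xi\in H_1^{\otimes k}\subseteq\ker p_j$ for all $j$). Only the identity diagram survives, and its coefficient in $g_k$ is $1$ by induction on $k$: the base $g_1=1-p_1$ has identity coefficient $1$, and neither $\iota(g_k)p_{k+1}$ nor $\iota(g_k)t_k\iota(g_k)$ appearing in \eqref{recurrence-formula} contributes to the identity diagram of $M_{k+1}(\lambda^{-1})$. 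Hence $g_k\xi=\xi$, yielding $V_k\subseteq g_k H_1^{\otimes k}$ and completing the identification $I=\langle P\rangle$.

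The main obstacle is the diagrammatic paragraph: establishing that every non-identity Motzkin diagram admits a factorisation into generators whose rightmost factor annihilates $V_k$, and verifying inductively that the identity diagram carries coefficient exactly $1$ in $g_k$. Once granted, the degree-two identification and the appeal to Shalit's correspondence do the rest.
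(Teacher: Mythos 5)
Your proposal is correct and follows the same overall strategy as the paper (identify $I_2=\IC P$ with $P=(g_1\otimes g_1)v_A=(\id-p)^{\otimes 2}v_A$, then match the higher graded pieces of $\langle P\rangle$ with $(g_kH_1^{\otimes k})^\perp$), but the two steps are executed differently, and in both cases your version is the more self-contained one. For degree two, the paper deduces $H_2=(\IC P)^\perp\cap H_1^{\otimes 2}$ from the orthogonality $g_2H^{\otimes 2}\perp v_A$ together with the dimension count $\dim H_2=(n-1)^2-1$ of Lemma \ref{lemma_dimension}; you instead read the identity $g_2=(g_1\otimes g_1)-\frac{1}{1-\lambda}(g_1\otimes \id)t_1(g_1\otimes\id)$ off the recurrence and check that the subtracted term is the rank-one projection onto $\IC P$ (which implicitly uses $\|(g_1\otimes\id)v_A\|^2=1-\lambda$, i.e.\ $\sum_i|a_i|^2|b_i|^2=\lambda$ from Lemma \ref{basicpropMpair}) --- both are valid. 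For higher degrees the paper only asserts that $g_kH^{\otimes k}$ equals the part of $H_1^{\otimes k}$ orthogonal to the images of the $t_i$ and $p_j$, ``established by a direct computation''; your diagrammatic argument is precisely that computation: $g_k\le g_1^{\otimes k}$ and $g_kt_i=0$ give one inclusion, and for the other you use that every non-identity Motzkin diagram is (a scalar multiple of) a word in $t_i,l_i,l_i^*$, each of which kills $V_k$ because $l=\Sigma(p\otimes\id)$ and $l^*=\Sigma(\id\otimes p)$ vanish on $H_1^{\otimes k}$, while the identity diagram carries coefficient $1$ in $g_k$ by the recurrence. The only ingredient you lean on without proof is the factorisation of non-identity Motzkin diagrams through the generators; this is exactly the generation statement underlying the presentation in \cite{Halverson, Ly}, so it is a legitimate citation rather than a gap. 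Net effect: your write-up proves the step the paper leaves implicit, at the cost of invoking the standard generation fact for $M_k(\lambda^{-1})$.
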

\begin{proof}
By Proposition \ref{propJY}-(1) the subspace $g_2H^{\otimes 2}$ is orthogonal to $v_A$.
As $(\id-p)^{\otimes 2}g_2=g_2$, also $(\id-p)^{\otimes 2}v_A$ is orthogonal to $g_2H^{\otimes 2}$.
If we set $w_{j_0}:=v$, then we have $v_i=\sum_{j\in J} c_{j,i} w_j$ for all $i\in J$ and
some unitary matrix $(c_{ji})$.
We thus have 
$$
(\id-p)v_i=\sum_{j\in J\setminus \{j_0\}} c_{j,i} w_j\,,
$$
and
\begin{align*}
 (\id-p)^{\otimes 2}v_A&=  
  \sum_{i\in [n]\setminus J} a_i v_i \otimes v_{\bar i}+ \sum_{i\in J, j, j'\in J\setminus \{j_0\}} a_{i}c_{j,i} c_{j', \bar{i}} w_{j} \otimes w_{j'}   
\end{align*}
which is contained in $(g_2H^{\otimes 2})^\bot$.
By Lemma \ref{lemma_dimension}, we have $\dim H_2=(n-1)^2-1$, 
which means
that
$$
H_2 = {\rm span} \left\{ \sum_{i\in [n]\setminus J} a_i v_i \otimes v_{\bar i}+ \sum_{i\in J, j, j'\in J\setminus \{j_0\}} a_{i}c_{j,i} c_{j', \bar{i}} w_{j} \otimes w_{j'} \right\}^\bot
$$
That claim holds for  $H_n$
follows by the identity
$$
g_n(H^{\otimes n}) = H_1^{\otimes n}\ominus \left\{ (1 - p)^{\otimes n}  {\rm span} \left\{ \Im(t_i), \Im(p_j) \, | \, 1\leq i\leq n-1, 1,\leq j\leq n\right\}\right\}
$$
which can be established by a direct computation.
\end{proof} 

We now move on to introduce the wider class of subproduct systems  we alluded to in the introduction. 
 We continue the discussion begun in Example \ref{seriesofexamples}, retaining the same numbering.
\begin{example}\label{seriesofexamples-part2}
Let $H$ be an $n$ dimensional Hilbert space, 
with $n\geq 3$,
$v_A$ be a unit Temperley-Lieb vector (here $A\in M_{n}(\IC)$). 
 \begin{enumerate}
\item[i)]  Here we assume that $\dim H=n=2m+1$ and choose $v=v_{m+1}$. Consider the Motzkin algebra   represented as in Example \ref{seriesofexamples}-(i). As an orthonormal basis for $H_1$ we take $\{v_i \, | \, 1\leq i \leq n, i\neq m+1\}$.
Then the ideal of $\IC\langle X_i\; | \; 1\leq i \leq n, i\neq m+1\rangle$ corresponding to the
 standard subproduct system   $H_r:=g_r H_1^{\otimes r}$, is 
$$
\langle \sum_{i\in [n]\setminus \{m+1\}} a_i X_i X_{\bar i}  \rangle
$$ 
\item[ii)] 
If $A$ is such that  $a_1=a_n$, $|a_1|=\sqrt{\lambda}$, and $b_i= (\sqrt{2})^{-1}\delta_{1,i}+ (\sqrt{2})^{-1}\delta_{n,i}$, as in Example \ref{seriesofexamples}-(ii). For a suitable orthonormal basis $w_1\cup \{v_2\}_{i=2}^{n-1}$ of $H_1$, 
one may identify $X_1=w_1=(\sqrt{2})^{-1}v_1- (\sqrt{2})^{-1} v_n$, $X_2=v_2$, \ldots $X_{n-1}=v_{n-1}$.
 Then
the ideal of $\IC\langle X_i\; | \; 1\leq i \leq n-1 \rangle$ corresponding to the
 standard subproduct system   $H_k:=g_k H_1^{\otimes k}$ is
$$
\langle -  a_1    X_1 X_1+ \sum_{i=2}^{n-1} a_i X_iX_{\bar i}\rangle
$$

 \item[iii)] For for any $r\leq n/2$,  suppose  $A$
 satisfies
  $a_1=\ldots =a_r=a_{\bar{1}}=\ldots =a_{\bar{r}}$ and $|a_1|=\lambda^{1/2}$, and
  $b_i:=(\sum_{j=1}^r \delta_{i,j}+ \delta_{i,\bar{j}})/\sqrt{2r}$ (as in Example \ref{seriesofexamples}-(iii)). 
For a suitable orthonormal basis $\{w_i\}_{i=1}^{2r-1}\cup \{v_i\}_{i=r+1}^{n-r}$ of $H_1$, 
one may identify $Y_1=w_1$, \ldots, $Y_{2r-1}=w_{2r-1}$,
 $X_1=v_{r+1}$, \ldots, $X_{n-2r}=v_{n-r}$. 
 Then
the ideal of $\IC\langle X_i, Y_j\; | \; 1\leq i \leq n-2r,  1\leq j \leq 2r \rangle$ corresponding to the
 standard subproduct system   $H_k:=g_k H_1^{\otimes k}$ is
\begin{align*}
H_2 = \rm{span} &\left\{ a_1   \sum_{b=1}^{2r-1}   \langle 1,-b\rangle  Y_b
    Y_{b}   + \sum_{j=r+1}^{n-r} a_j X_j  X_{\bar j}
 \right\}^\bot
\end{align*}
where $\langle a,b\rangle := e^{2\pi iab/(2r)}$ for $a,b\in\{1, \ldots, 2r\}$.

This can be shown by the following computations.\\
Consider the matrix $W=((\sqrt{2r})^{-1} \langle a,b\rangle)\in M_{2r,2r}(\IC)$, whose inverse is $T=((\sqrt{2r})^{-1} \langle a,-b\rangle)$.
Define the vectors $w_1, \ldots, w_{2r}\in H$ by
$$
(v_1, \ldots, v_r, v_{\bar{r}}, \ldots, v_{\bar 1})W=(w_1, \ldots, w_{2r})
$$
i.e. $w_a$ is the vector whose $b$-th entry is 
the pairing $\langle a,b\rangle$ up to normalization.
Note $w_{2r}=v$ and that $\{w_i\}_{i=1}^{2r}$ is an orthonormal family in $H$.
Clearly, we have
$$
(v_1, \ldots, v_r, v_{\bar{r}}, \ldots, v_{\bar 1})=(w_1, \ldots, w_{2r})T
$$
We rename $(v_1, \ldots, v_r, v_{\bar{r}}, \ldots, v_{\bar 1})$ as $(v_{j_1}, \ldots, v_{j_{2r}})$
(in the same order).
In particular, for $ a\in \{1, \ldots, 2r\}$, we have 
\begin{align*}
(\id-p)v_{j_a}&=\sum_{b=1}^{2r-1} (\sqrt{2r})^{-1} \langle a,-b\rangle w_b \\
(\id-p)v_{\bar j_a}&=(\id-p)v_{n-a+1}=\sum_{b=1}^{2r-1} (\sqrt{2r})^{-1} \langle 2r-a+1,-b\rangle w_b\,. 
\end{align*}
By taking the same steps as in the proof of Proposition \ref{description_ideals}, we have that
\begin{align*}
&(\id -p)^{\otimes 2}(v_A)=\\
&=\sum_{q=1}^{2r} a_1 (\id -p)v_{j_q} \otimes (\id -p)v_{\bar j_q} + \sum_{j=r+1}^{n-r} a_j v_j\otimes  v_{\bar{j}}\\
&=\sum_{q=1}^{2r} \frac{a_1}{2r}  \left(  \sum_{b=1}^{2r-1} \langle q,-b\rangle w_b
 \right) \otimes 
 \left( \sum_{b'=1}^{2r-1} ( \langle 2r-q+1,-b'\rangle w_{b'}
  \right) + \sum_{j=r+1}^{n-r} a_j v_j\otimes  v_{\bar{j}}\\
&=\sum_{q=1}^{2r} \frac{a_1}{2r}    \sum_{b, b'=1}^{2r-1} \langle q,-b\rangle   \langle 2r-q+1,-b'\rangle w_b
  \otimes  w_{b'}
   + \sum_{j=r+1}^{n-r} a_j v_j\otimes  v_{\bar{j}}\\
&= \frac{a_1}{2r}    \sum_{b, b'=1}^{2r-1} \left( \sum_{q=1}^{2r} \langle q,-b\rangle   \langle -q+1,-b'\rangle\right) w_b
  \otimes  w_{b'}
   + \sum_{j=r+1}^{n-r} a_j v_j\otimes  v_{\bar{j}}\\
&= \frac{a_1}{2r}    \sum_{b, b'=1}^{2r-1} \left( \sum_{q=1}^{2r} \langle q,-b\rangle  \langle q,b'\rangle  \right) \langle 1,-b'\rangle w_b
  \otimes  w_{b'}   + \sum_{j=r+1}^{n-r} a_j v_j\otimes  v_{\bar{j}}\\
&= a_1   \sum_{b=1}^{2r-1}   \langle 1,-b\rangle  w_b
  \otimes  w_{b}   + \sum_{j=r+1}^{n-r} a_j v_j\otimes  v_{\bar{j}}\,,
\end{align*}
so
\begin{align*}
H_2 = \rm{span} &\left\{ 
a_1   \sum_{b=1}^{2r-1}   \langle 1,-b\rangle  w_b
  \otimes  w_{b}   + \sum_{j=r+1}^{n-r} a_j v_j\otimes  v_{\bar j}  \right\}^\bot .
\end{align*}
\end{enumerate}
\end{example}
\begin{remark}
Up to a change of basis and possibly changing one entry of the matrix defining the Temperley-Lieb vector, we may recover the subproduct systems 
of \cite{Nesh}.
Indeed, in Example \ref{seriesofexamples-part2}-(i)  gives back the Temperley-Lieb subproduct of \cite{Nesh} produced by an 
even dimensional Hilbert space,
when  $\dim H$ is odd, i.e. $\dim H_1$ even.
Likewise Example \ref{seriesofexamples-part2}-(ii) returns the Temperley-Lieb subproduct system of \cite{Nesh}
produced by an odd dimensional Hilbert space,
when $\dim H$ is even, namely $\dim H_1$ is odd. 
However, if  $\dim H$ is odd, the subproduct system of Example \ref{seriesofexamples-part2}-(ii) is novel.
\end{remark}

\section{The Toeplitz and Cuntz-Pimsner C$^*$-algebras}\label{sec3}
The goal of this section is to   show that the Toeplitz 
and the Cuntz-Pimsner 
algebras associated with Example  \ref{seriesofexamples-part2}-(iii) 
can be seen
 as universal C$^*$-algebras, 
 defined in terms of a finite set of generators and relations (see Definitions \ref{defunitoep}, \ref{universalOP}).
 Furthermore, we investigate their representation theory.

We begin by showing some relations that the generators of our algebra satisfy.

We denote the orthogonal projection of $\CF_\CH$ onto  $H_n$ by $e_n$.
In the sequel, we identify the unital subalgebra
$C^*(\{ e_n  \, | \, n\in \IZ_+\})$
with $C(\IZ_+\cup \{\infty\})$ through the isomorphism 
 $e_n\mapsto \mathds{1}_n$ for all $n\in \IZ_+$. 
Denote by $\gamma$ the shift to the left in $C(\IZ_+\cup \{\infty\})$, that is the map $\gamma(f)(n)=f(n+1)$, $f\in C(\IZ_+\cup \{\infty\})$, in other words $\gamma(e_k)=e_{k-1}$ for any $k\geq 1$.

Define $\varphi: \IZ_+\cup\{\infty\}\to\IC$ as 
$$
\varphi(m):=\frac{\lambda^{-1}}{\lambda^{-1}-1} \frac{P_{m-1}((\lambda^{-1}-1)^{-2})}{P_m((\lambda^{-1}-1)^{-2}))}\,.
$$
\begin{lemma}\label{lemma31}
For 
 $q\in (0,1]$ such that $q+q^{-1}=\lambda^{-1}-1$,
one has
$$
\lim_{m\to \infty} \varphi(m) 
= q^2+q+1\,.
$$
In particular, $\varphi\in C(\IZ_+\cup \{\infty\})$.
\end{lemma}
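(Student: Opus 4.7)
The plan is to solve the linear recurrence $P_{m+1}(x) = P_m(x) - xP_{m-1}(x)$ explicitly at the point $x_0 := (\lambda^{-1}-1)^{-2}$, then read off the asymptotics of the ratio $P_{m-1}(x_0)/P_m(x_0)$. The helpful reparametrisation is $\lambda^{-1} - 1 = q + q^{-1}$, which turns the evaluation point into
\[ x_0 = \frac{1}{(q+q^{-1})^2} = \frac{q^2}{(1+q^2)^2}. \]
The characteristic polynomial $r^2 - r + x_0 = 0$ of the recurrence has discriminant $1 - 4x_0 = (1-q^2)^2/(1+q^2)^2$, and its two roots simplify cleanly to $r_+ = 1/(1+q^2)$ and $r_- = q^2/(1+q^2)$.

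For $q \in (0,1)$ the roots are distinct, so $P_m(x_0) = A r_+^m + B r_-^m$ for constants determined by $P_0(x_0) = P_1(x_0) = 1$. A short computation gives $A = 1/(1-q^2)$ and $B = -q^2/(1-q^2)$, hence the closed form
\[ P_m(x_0) = \frac{1 - q^{2(m+1)}}{(1-q^2)(1+q^2)^m}. \]
Taking the ratio, the denominators collapse and one obtains
\[ \frac{P_{m-1}(x_0)}{P_m(x_0)} = \frac{(1+q^2)(1-q^{2m})}{1-q^{2(m+1)}} \xrightarrow[m\to\infty]{} 1+q^2, \]
since $q^{2m} \to 0$. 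Now $\lambda^{-1} = 1 + q + q^{-1}$, so multiplying numerator and denominator by $q$ gives $\lambda^{-1}/(\lambda^{-1}-1) = (1+q+q^2)/(1+q^2)$. Multiplying by the above limit, the factor $(1+q^2)$ cancels and yields $\lim_m \varphi(m) = 1 + q + q^2$.

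The case $q=1$ (that is, $\lambda^{-1} = 3$) needs separate treatment, since the characteristic equation has a double root $r=1/2$; the closed form becomes $P_m(x_0) = (m+1)/2^m$, whence $P_{m-1}/P_m = 2m/(m+1) \to 2$, and multiplying by $\lambda^{-1}/(\lambda^{-1}-1) = 3/2$ gives $3 = 1^2 + 1 + 1$, matching the general formula. Continuity of $\varphi$ on $\IZ_+ \cup \{\infty\}$ is then immediate, since finite points are isolated and the limit at $\infty$ has just been identified. The only step with any subtlety is verifying that the roots $r_\pm$ take the advertised compact form; everything else is a routine solution of a constant-coefficient linear recurrence.
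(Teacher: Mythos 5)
Your proof is correct, and it reaches the limit by a slightly different route than the paper. The paper substitutes $P_m(x)=x^{m/2}Q_m(x^{-1/2})$, where $Q_{m+1}(y)=yQ_m(y)-Q_{m-1}(y)$, and then quotes the closed form $Q_m(q+q^{-1})=[m+1]_q$ (from the Temperley--Lieb setting of Habbestad--Neshveyev) to get $\lim_m Q_{m-1}/Q_m=q$ and hence $\lim_m P_{m-1}(x_0)/P_m(x_0)=(q+q^{-1})q=1+q^2$. You instead solve the constant-coefficient recurrence for $P_m$ at $x_0=(\lambda^{-1}-1)^{-2}$ directly via its characteristic equation, whose roots $1/(1+q^2)$ and $q^2/(1+q^2)$ yield the closed form $P_m(x_0)=\bigl(1-q^{2(m+1)}\bigr)/\bigl((1-q^2)(1+q^2)^m\bigr)$; this is exactly the $[m+1]_q$ formula in disguise (the ratio of the two roots is $q^2$), but your derivation is self-contained and does not rely on the cited identity. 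A small bonus of your version is that you treat the double-root case $q=1$ (i.e.\ $\lambda^{-1}=3$) explicitly, where $P_m(x_0)=(m+1)/2^m$ and the ratio tends to $2$, confirming the formula $\varphi(\infty)=3$; the paper's phrasing "$\lim_m Q_{m-1}/Q_m=q$ for $0<|q|\le 1$" covers this case only implicitly. All intermediate computations (discriminant, roots, coefficients $A$ and $B$, and the final cancellation of $1+q^2$ against $\lambda^{-1}/(\lambda^{-1}-1)=(1+q+q^2)/(1+q^2)$) check out.
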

\begin{proof}
Let $Q_m(y)\in \IC[y]$ be the polynomial defined by the equation $Q_{m+1}(y)=yQ_m(y)-Q_{m-1}(y)$ and $Q_0(y)=1$, $Q_1(y)=y$.
If $y=q+q^{-1}$, then an explicit form for $Q_m$ is 
$Q_m(y)=[m+1]_q$, \cite{Nesh}.
It is easy to check that for $0<|q|\leq 1$,
 $$\lim_m \frac{Q_{m-1}(y)}{Q_{m}(y)}=q.$$

The function $P_m(x)$ is obtained by taking $x^{m/2}Q_m(x^{-1/2})$.
For $x=(\lambda^{-1}-1)^{-2}$, $q+q^{-1}=\lambda^{-1}-1$
$$
\lim_{m\to \infty}\frac{P_{m-1}((\lambda^{-1}-1)^{-2})}{P_m((\lambda^{-1}-1)^{-2}))}= \lim_{m\to \infty}  (q+q^{-1}) \frac{Q_{m-1}(\lambda^{-1}-1)}{Q_m(\lambda^{-1}-1)}= q^2+1
$$
Then
$$
\lim_{m\to \infty} \frac{\lambda^{-1}}{\lambda^{-1}-1} \frac{P_{m-1}((\lambda^{-1}-1)^{-2})}{P_m((\lambda^{-1}-1)^{-2}))}= q^2+q+1\,.
$$
\end{proof}
For future reference, we record that 
\begin{align}\label{formulaphi}
&\varphi(m)=\lambda^{-1}\frac{[m]_q}{[m+1]_q}=\lambda^{-1}\frac{q^m-q^{-m}}{q^{m+1}-q^{-m-1}}\,.
\end{align}
Clearly, 
Examples \ref{seriesofexamples}-(i) and \ref{seriesofexamples-part2}-(i), 
Examples \ref{seriesofexamples}-(ii) and \ref{seriesofexamples-part2}-(ii) with $\dim H$ even,  
yield exactly the same C$^*$-algebras 
studied in \cite{Nesh}. As we will see in the next result, the others satisfy   different relations.
\begin{proposition}\label{prop-rel}
Let $q$ be such that $q+q^{-1}=\lambda^{-1}-1$.
Set
$J:=\{1,\ldots, r\}\cup \{\bar r,\ldots, \bar 2\}=\{j_1, \ldots, j_{2r-1}\}$, $j_{2r}:=n$,
and
$J^\circ:=\{r+1, \ldots, n-r\}$.
Under the hypotheses of Examples \ref{seriesofexamples}-(iii) and \ref{seriesofexamples-part2}-(iii),  the following relations hold
\begin{align}
&f S_i=S_i \gamma(f) \text{ for $f\in C(\IZ_+\cup \{\infty\})$}   \label{eq1rel}\\
&\sum_{i\in J^\circ \cup J} S_i S_{i}^*  = 1-e_0  \label{eq2rel}\\ 
&\sum_{b=1}^{2r-1} a_1 \langle 1,-b\rangle   S_{j_b}S_{j_{b}}    + \sum_{j=r+1}^{n-r} a_j S_j S_{\bar{j}}   =0 \label{eq3rel}\\
&   S^*_jS_i= \delta_{ij} -\varphi \bar{a}_{ i} a_jS_{\bar j}S_{\bar i}^*\qquad i, j \in J^\circ=\{r+1, \ldots, n-r\} \label{eq4rel}\\
&   S^*_{j} S_{j_s}= -   \varphi\bar{a}_{j_s} a_{j}    \langle  s, 1\rangle  S_{\bar j}S_{j_s}^* \qquad j_s\in J, j \in J^\circ
 \label{eq5rel}\\
&  S^*_{j_{s'}} S_{j_s}= \delta_{s, s'}  - \varphi \lambda 	  \langle s-s', 1\rangle  	S_{j_s}S_{j_{s'}}^* \qquad j_{s}, j_{s'} \in J=\{j_1, \ldots, j_{2r-1}\}
			\label{eq6rel}
\end{align} 
where	
$$
S_i=\left\{ 
\begin{array}{ll}
S_{w_s} & \text{if } i=j_{s}\in J\\
S_{v_i} & \text{if } i\in J^\circ\,. 
\end{array}
\right.
$$
\end{proposition}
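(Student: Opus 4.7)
\noindent\emph{Proof plan.}\quad
The plan is to verify the six identities by direct computation in the canonical Fock realisation, exploiting that $\{v_i:i\in J^\circ\}\cup\{w_s:1\le s\le 2r-1\}$ is an orthonormal basis of $H_1=(\id-p)H$, that $g_2v_A=0$ by Proposition \ref{propJY}(1), and that $H_{k+2}\subset H_2\otimes H_k$ by Lemma \ref{prop-gk}.

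Relations \eqref{eq1rel}--\eqref{eq3rel} follow from general principles. Each $S_i$ is a degree-$+1$ operator on the graded Hilbert space $\CF_\CH=\bigoplus_k H_k$, which yields \eqref{eq1rel} by comparing $f(k+1)\,S_i e_k$ on the two sides on $H_k$. Relation \eqref{eq2rel} compresses the full-Fock identity $\sum_x T_xT_x^*=\id_{\CF(H)}-P_{H_0}$ (valid for any ONB of $H_1$) down to $\CF_\CH$; the compression matches the definition of $S_x$ because $T_x^*$ already maps $\CF_\CH$ into $\CF_\CH$, as $H_k\subset H_1\otimes H_{k-1}$. For \eqref{eq3rel}, the computation at the end of Example \ref{seriesofexamples-part2}(iii) identifies $(\id-p)^{\otimes 2}v_A$ with the bracket on the left-hand side, while $g_2v_A=0$ together with $g_{k+2}\le g_2\otimes\id^{\otimes k}$ forces $(\id-p)^{\otimes 2}v_A\otimes u\perp H_{k+2}$ for every $u\in H_k$, so applying $e_\CH$ annihilates the sum.

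The substantive part is \eqref{eq4rel}--\eqref{eq6rel}. The unifying observation is that $p\,x_i=0$ for each basis vector $x_i$ of $H_1$, so the symmetric recurrence \eqref{symmetric-recurrence-formula} applied to $x_i\otimes u$ with $u\in H_k$ collapses to
\begin{equation*}
g_{k+1}(x_i\otimes u)=x_i\otimes u-\varphi(k)\,(\id\otimes g_k)\,t_1(x_i\otimes u).
\end{equation*}
Since $T_{x_j}^*$ and $\id\otimes g_k$ act on disjoint tensor positions, they commute in the sense $T_{x_j}^*(\id\otimes g_k)=g_k T_{x_j}^*$, and one obtains the master formula
\begin{equation*}
S_j^*S_i\,u=\delta_{ij}\,u-\varphi(k)\,g_k\,T_{x_j}^*\,t_1(x_i\otimes u).
\end{equation*}
The task thereby reduces to evaluating $T_{x_j}^*t_1(x_i\otimes u)$ in the three cases for $(x_i,x_j)$ and recognising the outcome as a scalar multiple of some $S_\bullet S_\bullet^*u$. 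Lemma \ref{formula_E} gives $t_1$ in terms of $v_A$, so case \eqref{eq4rel} produces $\bar a_ia_j\,v_{\bar j}\otimes T_{v_{\bar i}}^*u$ at once. Cases \eqref{eq5rel} and \eqref{eq6rel} in addition rely on the discrete Fourier identities
\begin{equation*}
\sum_{a=1}^{2r}\langle a,s\rangle\,v_{j_a}=\sqrt{2r}\,w_s,\qquad \sum_{b=1}^{2r}\langle -b,s\rangle\,T_{v_{j_b}}^*=\sqrt{2r}\,T_{w_s}^*,
\end{equation*}
which follow from orthogonality of characters on $\IZ/2r\IZ$ together with the uniformity $a_{j_a}=a_1$ for $a\in\{1,\ldots,2r\}$.

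I expect the main obstacle to be \eqref{eq6rel}, where two Fourier sums must telescope simultaneously---one coming from expanding $t_1(w_s\otimes u)$ in the $v_{j_a}$-basis and the other from the subsequent pairing against $w_{s'}$. The bookkeeping is controlled by the involution $\overline{j_a}=j_{2r-a+1}$ and by $\langle 2r,\cdot\rangle=1$; after both sums collapse a single character survives, which combined with $|a_1|^2=\lambda$ produces the Fourier phase displayed in the statement. The computations for \eqref{eq4rel} and \eqref{eq5rel} arise as simpler analogues involving zero or one Fourier sum, and the $\delta_{ij}$ term in \eqref{eq5rel} vanishes automatically because $J\cap J^\circ=\emptyset$.
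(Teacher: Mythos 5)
Your plan is correct and follows essentially the same route as the paper: expand $g_{m+1}$ via the symmetric recurrence (your "master formula" is exactly the paper's expansion of $S_iu=g_{m+1}T_iu$ after noting $px_i=0$ for the $H_1$-basis vectors), substitute Lemma \ref{formula_E} for $t_1$, and collapse the resulting sums by character orthogonality on $\IZ/2r\IZ$ in the three cases. The only caveat is that for \eqref{eq5rel}–\eqref{eq6rel} you stop at a sketch of the Fourier bookkeeping rather than carrying it out, but the ingredients you identify (the identities $\sum_a\langle a,s\rangle v_{j_a}=\sqrt{2r}\,w_s$, the uniformity $a_{j_a}=a_1$, $|a_1|^2=\lambda$, and the involution $\overline{j_a}=j_{2r-a+1}$) are precisely the ones the paper's computation uses.
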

 \begin{proof}
 Recall that $W=((\sqrt{2r})^{-1} \langle a,b\rangle)\in M_{2r}(\IC)$,
 $$
 (v_1, \ldots, v_r, v_{\bar{r}}, \ldots, v_{\bar 1})=(v_{j_1}, \ldots, v_{j_{2r}})
 $$
and that a basis of the hyperplane orthogonal to $v$, that is $H_1$, is given by
  $$
 w_h=\sum_{k=1}^{2r}\frac{\langle k,h\rangle v_{j_k}}{\sqrt{2r}}=\sum_{k=1}^{2r} c_{kh}v_{j_k}\qquad h\in\{1, \ldots , 2r-1\}\,.
 $$
 Note that 
\begin{align*}
 v_{j_h}& =\sum_{k=1}^{2r}\frac{\langle k,-h\rangle w_k}{\sqrt{2r}}=\sum_{k=1}^{2r} \bar c_{kh} w_k\qquad h\in\{1, \ldots , 2r\}\\
 (1-p)v_{j_h}& =\sum_{k=1}^{2r-1}\frac{\langle k,-h\rangle w_k}{\sqrt{2r}}=\sum_{k=1}^{2r-1} \bar c_{kh}w_k\qquad h\in\{1, \ldots , 2r\}
\end{align*}
 
 Equation \eqref{eq1rel} reflects the fact that $S_iH_k\subset H_{k+1}$. 
  Equation \eqref{eq2rel} holds for any standard subproduct system.  
As for Equation \eqref{eq3rel}, it follows from the ideal description provided   in 
 Examples  \ref{seriesofexamples-part2}-(iii). 
 
 For Equation \eqref{eq4rel}, 
 recall the recursion formula \eqref{recurrence-formula} for the Motzkin Jones-Wenzl idempotents of the Motzkin algebra
 \begin{align*}
g_{m+1}&=(\id \otimes g_m)((\id -p_{1})\otimes \id^{\otimes m}) -\varphi(n)(\id \otimes g_m) t_{1} (\id \otimes g_m)\,,
\end{align*}
where $g_1=1-p_1$ and $\varphi(m):=\lambda^{-1}(\lambda^{-1} -1)^{-1}P_{m-1}((\lambda^{-1}-1)^{-2})) (P_m((\lambda^{-1}-1)^{-2}))$.
  
By using the explicit formula of $t$ given in Lemma \ref{formula_E},
the expression of  $g_{m+1}$ reads as
   \begin{align*}
g_{m+1}&=(\id \otimes g_m)((\id -p_{1})\otimes \id^{\otimes m}) -\varphi(n)(\id \otimes g_m) t_{1} (\id \otimes g_m)\\
&=(\id \otimes g_m)((\id -p_{1})\otimes \id^{\otimes m}) \\
&\qquad -\varphi(n) (\id\otimes g_m) ( \left(		 \sum_{h,k=1}^n  \bar{a}_{  h} a_k (e_{kh}\otimes e_{\bar k \bar h})	\right) \otimes \id^{\otimes m-1})(\id\otimes g_m)\\
&=(\id \otimes g_m)((\id -p_{1})\otimes \id^{\otimes m}) \\
&\qquad - \sum_{h,k=1}^n  \varphi(n)\bar{a}_{  h} a_k e_{kh} \otimes (g_m  (e_{\bar k, \bar h}\otimes \id^{\otimes m-1})  g_m  )\,.
\end{align*} 
  
  For any $u\in H_m$, $S_i u$ has two different formulas depending on whether $i=j_s\in J$ or $i\in J^\circ$.
  In the first case, we have
   \begin{align*}
S_i u&=g_{m+1}T_i u=w_{s} \otimes u  -\sum_{h,k=1}^m  \varphi(m)\bar{a}_{  h} a_k e_{kh}w_{s} \otimes (g_m  (e_{\bar k, \bar h}\otimes \id^{\otimes m-1})  g_m  )u\\
&=w_{s} \otimes u  -\sum_{h,k=1}^m  \varphi(m)\bar{a}_{  h} a_k \left( \sum_{k'=1}^{2r} c_{k's} e_{kh}v_{j_{k'}} \right) \otimes (g_m  (e_{\bar k, \bar h}\otimes \id^{\otimes m-1})  g_m  )u\\
&=w_{s} \otimes u  -\sum_{k=1}^m \sum_{k'=1}^{2r} \varphi(m)\bar{a}_{j_{k'}} a_k   c_{k's} v_{k}  \otimes (g_m  (e_{\bar k, \bar j_{k'}}\otimes \id^{\otimes m-1})  g_m  )u.
   \end{align*}
      In the second case, we have
      \begin{align*}
S_i u&=g_{m+1}T_i u=v_i \otimes u  -\sum_{h,k=1}^m  \varphi(m)\bar{a}_{  h} a_k e_{kh}v_i \otimes (g_m  (e_{\bar k, \bar h}\otimes \id^{\otimes m-1})  g_m  )u\\
&=v_i \otimes u  -\sum_{k=1}^m  \varphi(m)\bar{a}_{  i} a_k v_k \otimes (g_m  (e_{\bar k, \bar i}\otimes \id^{\otimes m-1})  g_m  )u\,.
\end{align*}

We want to compute $S^*_jS_i$. There are some cases to deal with.

\noindent
  \textbf{Case} $i,j \in J^\circ$: 
Note that $g_m(e_{ij}\otimes \id^{\otimes m -1})g_m=g_mT_iT_j^*g_m=S_iS_j^*$.\\
  We see that multiplying by $S^*_j$,    we have
  $$
  S^*_jS_ie_m= \delta_{ij}e_m  -\varphi(m)\bar{a}_{  i} a_jS_{\bar j}S_{\bar i}^*e_m\,.
  $$

\noindent
  \textbf{Case} $i=j_{s}, j=j_{s'}\in J$: 
We have
   \begin{align*}
S_j^*S_iu&=\delta_{i,j} u  -\sum_{k=1}^m \sum_{k'=1}^{2r} \varphi(m)\bar{a}_{j_{k'}} a_k   c_{k's} S_j^* v_{k}  \otimes (g_m  (e_{\bar k, \bar j_{k'}}\otimes \id^{\otimes m-1})  g_m  )u\\
&=\delta_{i,j} u  -\sum_{k=1}^{2r} \sum_{k'=1}^{2r} \varphi(m)\bar{a}_{j_{k'}} a_{j_k}   c_{k's} S_j^* v_{j_k}  \otimes (g_m  (e_{\bar j_k, \bar j_{k'}}\otimes \id^{\otimes m-1})  g_m  )u\\
&=\delta_{i,j} u  - \sum_{k, k'=1}^{2r} \varphi(m)\bar{a}_{j_{k'}} a_{j_k}   c_{k's} S_j^* \left( \sum_{k''=1}^{2r} \bar c_{kk''} w_{k''} \right)  \otimes (g_m  (e_{\bar j_k, \bar j_{k'}}\otimes \id^{\otimes m-1})  g_m  )u\\
&=\delta_{i,j} u  -  \sum_{k, k'=1}^{2r} \varphi(m)\bar{a}_{j_{k'}} a_{j_k}   c_{k's} S_j^* \left( \sum_{k''=1}^{2r-1} \bar c_{kk''} w_{k''} \right)  \otimes (g_m  (e_{\bar j_k, \bar j_{k'}}\otimes \id^{\otimes m-1})  g_m  )u\\
&=\delta_{i,j} u  -  \sum_{k, k'=1}^{2r} \varphi(m)\bar{a}_{j_{k'}} a_{j_k}   c_{k's} \bar c_{ks'}      (g_m  (e_{\bar j_k, \bar j_{k'}}\otimes \id^{\otimes m-1})  g_m  )u\\
&=\delta_{i,j} u  -  \sum_{k, k'=1}^{2r}\sum_{h, h'=1}^{2r-1}  \varphi(m)\bar{a}_{j_{k'}} a_{j_k}   c_{k's} \bar c_{ks'}      c_{hk} \bar c_{k' h'}   \langle h, -1\rangle \langle h', 1\rangle S_{j_{h'}}S_{j_h}^*u\\
&=\delta_{i,j} u  -  \sum_{k, k'=1}^{2r}\sum_{h, h'=1}^{2r-1} \varphi(m)\lambda  c_{hk} \bar c_{k' h'}  c_{k's} \bar c_{ks'}      \langle h, -1\rangle \langle h', 1\rangle S_{j_{h'}}S_{j_h}^*u\\
&=\delta_{i,j} u  -  \sum_{h, h'=1}^{2r-1}\varphi(m)\lambda  \left(  \sum_{k=1}^{2r}  c_{hk}  \bar c_{ks'}   \right) \left( \sum_{k'=1}^{2r}    \bar c_{k' h'}  c_{k's}  \right)      \langle h, -1\rangle \langle h', 1\rangle  S_{j_{h'}}S_{j_h}^* u \\
&=\delta_{i,j} u  -  \varphi(m)\lambda     \langle s-s', 1\rangle  S_{j_{s}}S_{j_{s'}}^* u\,,
  \end{align*}
where we used that,  for $j_k, j_{k'}\in J$, 
$$
g_m(e_{\bar j_k \bar j_{k'}}\otimes \id^{\otimes m-1})g_m=\sum_{h, h'=1}^{2r-1} c_{hk} \bar c_{k' h'}   \langle h, -1\rangle \langle h', 1\rangle S_{j_{h'}}S_{j_h}^*e_m
$$
 which holds because 
 $$
e_{\bar j_k \bar j_{k'}}(x)=\langle x, v_{\bar j_{k'}}\rangle v_{\bar j_k}=\sum_{h, h'=1}^{2r} c_{hk} \bar c_{k' h'}\langle x, w_h\rangle \langle h, -1\rangle \langle h', 1\rangle  w_{h'}\,.
$$ 
  
\noindent
  \textbf{Case}  $i=j_s\in J$, $j \in J^\circ$:
  \begin{align*}
S_j^*S_i u    &= -\sum_{k=1}^m \sum_{k'=1}^{2r} \varphi(m)\bar{a}_{j_{k'}} a_k   c_{k's} S_j^*v_{k}  \otimes (g_m  (e_{\bar k, \bar j_{k'}}\otimes \id^{\otimes m-1})  g_m  )u\\
&= - \sum_{k'=1}^{2r} \varphi(m)\bar{a}_{j_{k'}} a_j   c_{k's}    (g_m  (e_{\bar j, \bar j_{k'}}\otimes \id^{\otimes m-1})  g_m  )u\\
&= - \sum_{k'=1}^{2r} \sum_{k''=1}^{2r-1} \varphi(m)\bar{a}_{j_{k'}} a_j   c_{k's}      c_{2r-k'+1, k''}  S_{\bar j}S_{j_{k''}}^* u\\
&= - \sum_{k''=1}^{2r-1} \sum_{k'=1}^{2r}  \varphi(m)\bar{a}_{j_{k'}} a_j   c_{k's}      c_{2r-k'+1, k''}  S_{\bar j}S_{j_{k''}}^* u\\
&= - \sum_{k''=1}^{2r-1} \sum_{k'=1}^{2r}  \varphi(m)\bar{a}_{j_{k'}} a_j   \langle k', s \rangle      \langle  2r-k'+1, k''\rangle  (2r)^{-1}S_{\bar j}S_{j_{k''}}^* u\\
&= - \sum_{k''=1}^{2r-1} \varphi(m)\bar{a}_{j_{1}} a_j  \left( \sum_{k'=1}^{2r}   \langle k', s \rangle      \langle  k',- k''\rangle \right)   \langle  1, k''\rangle  (2r)^{-1}S_{\bar j}S_{j_{k''}}^* u\\
&= -   \varphi(m)\bar{a}_{j_{1}} a_j    \langle  1, s\rangle  S_{\bar j}S_{j_{s}}^* u\\
&= -   \varphi(m)\bar{a}_{j_{s}} a_j    \langle  1, s\rangle  S_{\bar j}S_{j_{s}}^* u\,,
  \end{align*}
where we used that 
$$
g_m  (e_{\bar j, \bar j_{k'}}\otimes \id^{\otimes m-1})  g_m   =\sum_{k''=1}^{2r-1}   c_{2r-k'+1, k''}  S_{\bar j}S_{j_{k''}}^*e_m\,,
$$
which holds because, for $j_{k'}\in J$, $j \in J^\circ$, we have
$$
e_{\bar j, \bar j_{k'}}(x)=\langle x, v_{\bar j_{k'}}\rangle v_{\bar j}=\langle x, v_{j_{2r-k'+1}}\rangle v_{\bar j}=\sum_{k''=1}^{2r} c_{2r-k'+1, k''} \langle x, w_{k''}\rangle v_{\bar j}\,.
$$
  \end{proof}

\begin{example}
For the subproduct system treated in Example \ref{seriesofexamples}-(ii) and \ref{seriesofexamples-part2}-(ii), the relations reduce to
\begin{equation}
\begin{aligned}
&f S_i=S_i \gamma(f) \text{ for $f\in C(\IZ_+\cup \{\infty\})$} \\
&\sum_{i=1}^{n-1}  S_i S_{i}^*  = 1-e_0 \\
&- a_{1} S_{1}S_{1}+\sum_{i=2}^{n-1}  a_i S_i S_{\bar i}=0 \\ 
&   S^*_jS_i= \delta_{ij} -\varphi\bar{a}_{ i} a_jS_{\bar j}S_{\bar i}^* \qquad i, j \in \{2, \ldots, n-1\} \\ 
&  S^*_jS_1= \delta_{j,1}  + \varphi \bar{a}_{  1}  a_j 		S_{\bar j}S_{1}^*\qquad j \in \{2, \ldots, n-1\} 	\\	
&  S^*_1S_1= 1 - \varphi \lambda 		S_{1}S_{1}^* \,.		
\end{aligned}
\end{equation}
\end{example}

We present the universal C$^*$-algebra that will be shown to coincide with our Toeplitz algebra.
\begin{definition}\label{defunitoep}
Let   $\tilde \CT_P$ be the universal unital C$^*$-algebra generated by
  $\tilde S_1$, \ldots, $\tilde S_{n-1}$ and $C(\IZ_+\cup \{\infty\})$ satisfying the relations \eqref{eq1rel}, \eqref{eq2rel}, \eqref{eq3rel}, \eqref{eq4rel}, \eqref{eq5rel}, \eqref{eq6rel}, with $e_0$ being understood as $\mathds{1}_0$ in $C(\IZ_+\cup \{\infty\})$. \\
 For any $\beta\in\IR$, 
 by universality the map $\sigma_\beta(\tilde S_j):=e^{2\pi i \beta}\tilde S_j$ for all $j$, $\sigma_\beta(f)=f$ 
 for all $f\in C(\IZ_+\cup \{\infty\})$,
 extends to an automorphism of $\tilde \CT_P$.
This yields an action of $\IT$ continuous w.r.t. the 
 point-wise norm convergence, which we call the gauge action. 
  \end{definition}

We now show that the canonical representation is irreducible and collect a few results on the representations of $\CT_P$ and $\CO_P$.
  \begin{lemma}\label{irreducibility_canonical_rep}
 The canonical representation of 
 $\CT_P$  is   irreducible.
 \end{lemma}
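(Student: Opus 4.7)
The plan is to prove irreducibility by showing that the commutant of $\CT_P$ in $\CB(\CF_\CH)$ reduces to $\IC\cdot\id$. So I take an arbitrary operator $T$ commuting with every element of $\CT_P$ and aim to deduce that $T=\alpha\cdot\id$ for some $\alpha\in\IC$.

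The first step is diagonality. Since every spectral projection $e_n$ onto $H_n$ lies in $\CT_P$ via the embedded copy of $C(\IZ_+\cup\{\infty\})$, the operator $T$ commutes with each $e_n$ and therefore preserves each summand $H_n$ of the Fock space. In particular, as $H_0=\IC\mathds{1}$ is one-dimensional, there exists $\alpha\in\IC$ with $T\mathds{1}=\alpha\mathds{1}$.

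The main step is cyclicity of the vacuum $\mathds{1}$ for $\CT_P$. For $v\in H_1$ and $u\in H_k$ the defining formula of the compressed creation operator reads $S_v u=g_{k+1}(v\otimes u)$. Using Lemma \ref{prop-gk}, which asserts $g_{k+1}(\id\otimes g_k)=g_{k+1}$ in order to absorb an inner Jones--Wenzl projection, a short induction on $k$ yields
\[
S_{v_{i_1}}S_{v_{i_2}}\cdots S_{v_{i_k}}\mathds{1}=g_k(v_{i_1}\otimes v_{i_2}\otimes\cdots\otimes v_{i_k})
\]
for every $(i_1,\ldots,i_k)\in\{1,\ldots,n\}^k$. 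As the right-hand sides span $g_k H^{\otimes k}=H_k$ as the tuple varies, the $\CT_P$-orbit of $\mathds{1}$ is dense in $\CF_\CH$.

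To conclude, for any monomial $m=S_{v_{i_1}}\cdots S_{v_{i_k}}$ the hypothesis $Tm=mT$ gives $T(m\mathds{1})=m(T\mathds{1})=\alpha\,m\mathds{1}$, so $T$ agrees with $\alpha\cdot\id$ on the dense subspace $\CT_P\cdot\mathds{1}$ and hence everywhere by boundedness. I do not foresee a genuine obstacle: the only step that is not pure formality is the cyclicity identity, which is essentially a repeated application of Lemma \ref{prop-gk}, and the standard argument (projections $e_n$ in the algebra, plus vacuum cyclicity) then closes the proof.
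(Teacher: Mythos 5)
Your proof is correct, but it runs along a different axis than the paper's. You prove irreducibility by showing the commutant is trivial: since $e_0=1-\sum_i S_iS_i^*$ is a rank-one projection in $\CT_P$, any commuting operator $T$ is scalar on the vacuum, and then cyclicity of the vacuum --- which you establish explicitly via the identity $S_{v_{i_1}}\cdots S_{v_{i_k}}\mathds{1}=g_k(v_{i_1}\otimes\cdots\otimes v_{i_k})$ and Lemma~\ref{prop-gk} --- forces $T=\alpha\,\id$ on a dense subspace. The paper instead proves the (formally stronger) statement that \emph{every} nonzero vector of $\CF_\CH$ is cyclic: given $v\neq 0$ with $e_kv\neq 0$, the relation $\sum_{i\in J^\circ\cup J}S_iS_i^*=1-e_0$ guarantees some $S_j^*e_kv\neq 0$ in $H_{k-1}$, and iterating this descent lands on a nonzero multiple of the vacuum, whose cyclicity the paper takes as already known from Section~\ref{sec2}. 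Both arguments are sound; yours is more self-contained on the vacuum-cyclicity point (which the paper only asserts), while the paper's descent mechanism is the piece that gets reused later (Remark~\ref{fockSiadjoint}, Proposition~\ref{ideal_structure}) to show that every nonzero ideal of $\CT_P$ contains the compacts, so it earns its keep beyond this lemma. One cosmetic remark: you justify $e_n\in\CT_P$ by appeal to ``the embedded copy of $C(\IZ_+\cup\{\infty\})$'', which is how the paper sets up the universal algebra $\tilde\CT_P$; for the concrete $\CT_P$ the cleaner justification is $e_0=1-\sum_iS_iS_i^*$ together with $e_{k+1}=\sum_iS_ie_kS_i^*$, and in fact your argument only needs $e_0$.
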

 \begin{proof}
  It suffices to show that any non-zero vector $v$ in $\CF_\CH$ is cyclic.
  Since $v\neq 0$, then $e_kv\neq 0$ for some $k\geq 0$. If $k=0$, then $e_0v\in H_0$ is cyclic.
  Suppose that $k\geq 1$. As $\sum_{i\in J^\circ \cup J} S_iS_i^*=1-e_0$, where $J^\circ=\{r+1, \ldots, n-r\}$ and $ J=\{j_1, \ldots, j_{2r-1}\}$, then $S_jS_j^*e_kv\neq 0$ for some $j$. 
  In particular, $0\neq S_j^*e_kv\in H_{k-1}$. By repeating this argument we find an $\alpha$ such that $0 \neq S_\alpha^* e_kv\in H_0$ which is  cyclic.
 \end{proof}
 \begin{remark}\label{fockSiadjoint}
 By the argument in the proof of Lemma \ref{irreducibility_canonical_rep} one obtains that  
 $$
 \sum_{i\in J^\circ \cup J}
  S_i^* \CF_P=\CF_P\,, $$
  where $J^\circ=\{r+1, \ldots, n-r\}$ and $ J=\{j_1, \ldots, j_{2r-1}\}$.
 \end{remark}

 Denote by $\tilde \CT_P^\IT\subset \tilde \CT_P$ the fixed-point subalgebra under the gauge action.
As is commonly done, we adopt the multi-index notation for monomials in both $S_i$ and $\tilde S_i$, that is for example
 $\tilde S_\alpha:=\tilde S_{\alpha_1}\cdots \tilde S_{\alpha_k}$ for
$\alpha=(\alpha_1, \ldots, \alpha_k)\in (J^\circ \cup J)^k$.
We also define the vectors $\xi_\alpha := T_\alpha 1\in H_1^{\otimes |\alpha |}$.
It is a standard fact that the formula
\begin{align*}
\tilde E:& \, \tilde \CT_P \to \tilde \CT_P^\IT\\
\tilde E(x)&:=\int_0^1 \sigma_\beta(x)d\beta 
\end{align*}
yields a faithful conditional expectation onto $\tilde \CT_P^\IT$.
Since
 \begin{align*}
\tilde E(\tilde S_\alpha \tilde S_\beta^*)&= \tilde S_\alpha \tilde S_\beta^* \int_0^1 e^{2\pi i \beta (|\alpha|-|\beta|)}d\beta = \delta_{|\alpha|, |\beta|}\tilde S_\alpha \tilde S_\beta^*\,,
\end{align*}
it follows that $\tilde E(\tilde \CT_P)={\rm span}\{\tilde S_\alpha \tilde S_\beta^*\, | \, |\alpha|=|\beta|\}$. Note that $f \tilde S_\alpha \tilde S_\beta^*=\tilde S_\alpha \tilde S_\beta^* f$ 
for $|\alpha|=|\beta|$ and 
 $f\in C(\IZ\cup \{\infty \})$.

For any irreducible representation $\psi: \CT_P\to \CB(K)$, where  $\psi(e_k)\neq 0$ for all $k$,
the gauge automorphisms can also be defined on $\psi(\CT_P)$. 
In fact, for any $v\in K_j:=\psi(e_j)K$ define the  operator $U_\beta v:=e^{2\pi i \beta j} v$. We have 
\begin{align}\label{gauge_implemented}
U_\beta \psi(S_j)U_\beta^*v &=e^{-2\pi i \beta j} U_\beta (\psi(S_j)v)= 
e^{-2\pi i \beta j} e^{2\pi i \beta (j+1)} \,,
\psi(S_j)v\,,
\end{align}
where we used that $\psi(S_j)K_j\subseteq K_{j+1}$.
Furthermore, $U_\beta$ is a unitary operator on $K$ 
because $\oplus_{j\geq 0} \psi(e_j)K$ is the whole $K$ by irreducibility.
Therefore, we have $U_\beta \psi(S_j)U_\beta^*=e^{2\pi i \beta }\psi(S_j)$.
We denote by  $\psi(\CT_P)^\IT$ the fixed point algebra under this family of automorphisms. 
There is a faithful conditional expectation defined as
\begin{align*}
&E:  \psi(\CT_P) \to  \psi(\CT_P)^\IT\\
& E(x):=\int_0^1 \Ad(U_\beta)(x)d\beta .
\end{align*}
As
 \begin{align*}
E(\psi(S_\alpha S_\beta^*))&= \psi(S_\alpha S_\beta^*) \int_0^1 e^{2\pi i \beta (|\alpha|-|\beta|)}d\beta = \delta_{|\alpha|, |\beta|}  \psi(S_\alpha S_\beta^*)\,,
\end{align*}
we have $\psi(\CT_P)^\IT =\overline{\rm span}\{\psi( S_\alpha S_\beta^*)\, | \, |\alpha|=|\beta|\}$.

 We are now going to prove that the relations of Proposition \ref{prop-rel} actually define our C$^*$-algebra $\CT_P$. 
A key ingredient of the proof are the gauge automorphisms.

By universality, $\CT_P$ is a quotient of $\tilde \CT_P$, namely
the map 
$\pi: \tilde \CT_P\to \CT_P$ given by $\pi(\tilde S_j):=S_j$
extends to a $*$-epimorphism of 
 $\tilde \CT_P$ onto $\CT_P$. We next show that this map is injective as well. 
 In other terms, the Toeplitz algebra $\CT_P$ 
 is actually a universal $C^*$-algebra.
\begin{theorem}\label{theo-faithful-reps}
The map $\pi$ defined above establishes a $*$-isomorphism between  $\tilde \CT_P$ and $\CT_P$.  
\end{theorem}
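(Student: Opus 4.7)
The goal is to prove that $\pi:\tilde\CT_P\to\CT_P$ is injective; surjectivity is immediate from the definitions. The plan is the standard gauge-invariant uniqueness argument, in three steps: (i) show $\pi$ intertwines the two gauge actions, hence the associated conditional expectations; (ii) reduce injectivity of $\pi$ to injectivity on the fixed-point subalgebra $\tilde\CT_P^\IT$; (iii) prove injectivity on $\tilde\CT_P^\IT$ by analysing the corners $e_k\,\tilde\CT_P^\IT\,e_k$ one by one.

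For (i), by universality the gauge action $\sigma_\beta$ of $\tilde\CT_P$ acts on generators by $\tilde S_j\mapsto e^{2\pi i\beta}\tilde S_j$ and fixes $C(\IZ_+\cup\{\infty\})$, while in the canonical representation the same formula is implemented by the unitary $U_\beta$ of \eqref{gauge_implemented}. Consequently $\pi\circ\sigma_\beta=\Ad(U_\beta)\circ\pi$ on generators, hence on all of $\tilde\CT_P$, and integrating over $\beta$ gives $\pi\circ\tilde E=E\circ\pi$. For (ii), if $x\in\ker\pi$ then $\pi(\tilde E(x^*x))=E(\pi(x^*x))=0$; assuming $\pi|_{\tilde\CT_P^\IT}$ is injective one concludes $\tilde E(x^*x)=0$, and hence $x=0$ by faithfulness of $\tilde E$.

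For (iii), iterating \eqref{eq1rel} shows that each $e_k$ commutes with every $\tilde S_\alpha\tilde S_\beta^*$ with $|\alpha|=|\beta|$, and a routine check using the same relation yields $e_j\,(\tilde S_\alpha\tilde S_\beta^*)\,e_k=0$ whenever $j\neq k$. Thus $\tilde\CT_P^\IT$ is block-diagonal with respect to the family $\{e_k\}$, and each corner $e_k\tilde\CT_P^\IT e_k$ is the closed linear span of $\{\tilde S_\alpha\tilde S_\beta^*e_k:|\alpha|=|\beta|=k\}$ after applying \eqref{eq4rel}--\eqref{eq6rel} repeatedly to eliminate inner occurrences of $\tilde S_\delta^*\tilde S_\gamma$ in favour of scalars and shorter terms. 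On the $\CT_P$ side, a direct computation gives $S_\alpha S_\beta^*|_{H_k}=|g_k\xi_\alpha\rangle\langle g_k\xi_\beta|$ for $|\alpha|=|\beta|=k$, and since $\{g_k\xi_\alpha:|\alpha|=k\}$ spans $H_k$ by construction of the subproduct system, these rank-one operators span all of $\CB(H_k)$; therefore $e_k\CT_P^\IT e_k=\CB(H_k)$, of dimension exactly $(\dim H_k)^2$. Combined with an abstract dimension bound $\dim e_k\tilde\CT_P^\IT e_k\leq(\dim H_k)^2$, this forces $\pi$ to be an isomorphism on each corner, and the block-diagonal decomposition then yields injectivity on all of $\tilde\CT_P^\IT$.

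The main obstacle is the abstract dimension bound $\dim e_k\tilde\CT_P^\IT e_k\leq(\dim H_k)^2$: it requires putting every word in the $\tilde S_i,\tilde S_i^*$ sitting in the fixed-point corner into a canonical normal form indexed by pairs $(\alpha,\beta)$ with $|\alpha|=|\beta|=k$, and the argument genuinely exploits the Jones--Wenzl recursion encoded in \eqref{eq4rel}--\eqref{eq6rel}. Without controlling this dimension abstractly, the universal algebra could in principle be strictly larger than the concrete one; once the bound is in place, injectivity of $\pi$ on each corner, and therefore on $\tilde\CT_P^\IT$ and on $\tilde\CT_P$, follows directly.
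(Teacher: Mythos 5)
Your architecture matches the paper's: reduce to the gauge-fixed subalgebra via the intertwined conditional expectations, then compare the corners $e_k\,\tilde \CT_P^\IT\,e_k$ with $\CB(H_k)$ by a dimension count. The concrete half of that count (that $S_\alpha S_\beta^*$ restricted to $H_k$ is the rank-one operator $|g_k\xi_\alpha\rangle\langle g_k\xi_\beta|$, and that these span $\CB(H_k)$) is correct and is exactly what the paper does. But the step you yourself flag as ``the main obstacle'' --- the abstract bound $\dim e_k\tilde\CT_P^\IT e_k\leq(\dim H_k)^2$ --- is left unproved: you assert that it follows from putting words into ``a canonical normal form'' via \eqref{eq4rel}--\eqref{eq6rel}, but you never exhibit such a normal form or show it has the right cardinality, and this is precisely where the real work lies. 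The paper closes this gap differently and more cleanly: it introduces the quotient $\CC$ of the free algebra $\IC\langle s_1,\dots,s_{n-1}\rangle$ (no adjoints) by the single degree-two relation \eqref{eq3rel}, proves $\dim\CC_{k+1}=\dim\CC_k\cdot\dim\CC_1-\dim\CC_{k-1}$ by identifying the kernel of $\CC_k\otimes\CC_1\to\CC_{k+1}$ with $\CC_{k-1}\tilde y$, so that $\dim\CC_k=\dim H_k$ by the same three-term recursion satisfied by $\dim H_k$ (Lemma \ref{lemma_dimension}), and then bounds $\dim\tilde\CT_P^{\IT,k}\leq(\dim\CA_k)^2\leq(\dim\CC_k)^2$ where $\CA_k=\operatorname{span}\{\tilde S_\alpha:|\alpha|=k\}$. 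Without some version of this argument your proof does not go through, since a priori the universal corner could be strictly larger than $M_{\dim H_k}(\IC)$.

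A second, smaller gap: you claim the block-diagonal decomposition of $\tilde\CT_P^\IT$ over $\{e_k\}$ ``yields injectivity on all of $\tilde\CT_P^\IT$''. But $\tilde\CT_P^\IT$ is not the direct sum of the corners: it contains $C(\IZ_+\cup\{\infty\})$ and in particular the unit, and $\sum_k e_k$ does not converge in norm. You must still show that every non-zero positive element of $\tilde\CT_P^\IT$ has non-zero compression by some $e_n$ (the paper's Step 8), and separately that the closed span of the corners really is the $c_0$-direct sum $\oplus_k\tilde\CT_P^{\IT,k}$ (Step 6), before injectivity on the corners can be promoted to injectivity on the whole fixed-point algebra. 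Also note that the paper needs its Step 4 (elements $e_k\tilde S_\alpha\tilde S_\beta^*$ with $|\alpha|=|\beta|<k$ lie in the span of those with $|\alpha|=|\beta|=k$, via insertion of $\sum_i\tilde S_i\tilde S_i^*=1-e_0$) to see that the corners are exhausted by words of length exactly $k$; your ``routine check'' elides this.
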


 \begin{proof} 
 All we need to show is that $\pi$ is injective.  As customary, it suffices to check injectivity on positive elements.
 For the sake of clarity, we divide the proof into several steps.

 \noindent
 \textbf{Step 1}:  $\pi$ is injective if and only if its restriction to $\tilde \CT_P^\IT$ is.\\
 Note that 
 \begin{align*}
 \pi(\tilde E(\tilde S_\alpha \tilde S_\beta^*))&= \pi(\tilde S_\alpha \tilde S_\beta^*) \int_0^1 e^{2\pi i \beta (|\alpha|-|\beta|)}d\beta \\
 &= \delta_{|\alpha|, |\beta|}  S_\alpha    S_\beta^* = S_\alpha S_\beta^* \int_0^1 e^{2\pi i \beta (|\alpha|-|\beta|)}d\beta =E(S_\alpha S_\beta^*) =E(\pi(S_\alpha S_\beta^*))\,.
\end{align*}
In particular, we have $\pi\circ \tilde E = E\circ \pi$ 
and $\pi(\tilde \CT_P^\IT)=\CT_P^\IT$.
Now the claim holds because the conditional expectation $E$ is faithful.

 \noindent
 \textbf{Step 2}: One has $e_k \tilde S_\alpha =0$ for all $|\alpha|\geq k+1$, with $k\geq 0$. \\ Indeed, for $k=0$ 
 we have $0=e_0(\sum_{i=1}^{n-1} \tilde  S_i\tilde  S_{i}^*  )e_0=\sum_{i=1}^{n-1}  e_0\tilde S_i \tilde S_{i}^*e_0 $. Since $e_0\tilde S_i\tilde  S_{i}^*e_0$ is a positive element, this means that $e_0\tilde S_i \tilde S_{i}^*e_0=0$ for all $i$.
 This implies that $0=\|e_0\tilde S_i \tilde S_{i}^*e_0\|=\| e_0 \tilde S_i\|^2$, that is $e_0 \tilde S_i=0$.
  Now, for $k\geq 1$, 
 an inductive argument 
 can be used 
 thanks to
 $e_k  \tilde S_\alpha =  \tilde S_{\alpha_1} e_{k-1}( \tilde S_{\alpha_2}\cdots \tilde S_{\alpha_h})$.
 
  \noindent
 \textbf{Step 3}: Set $\tilde \CT_P^{\IT,k}$ as the linear span of $e_k \tilde S_\alpha \tilde S_\beta^* = e_k \tilde S_\alpha \tilde S_\beta^* e_k$, where $|\alpha|=|\beta|= k$.  
 Then $\tilde \CT_P^{\IT,k}$ is an algebra isomorphic to $M_{\dim H_k}(\IC)$.\\
 Indeed, the vector space $\tilde \CT_P^{\IT,k}$ is an algebra because of 
 \eqref{eq4rel}, \eqref{eq5rel}, \eqref{eq6rel}, Step 2, and the equality $e_k \tilde S_\alpha \tilde S_\beta^* = \tilde S_\alpha e_0\tilde S_\beta^*= \tilde S_\alpha \tilde S_\beta^*e_k$. 
We will show that  both $\dim \CT_P^{\IT,k}$ and $\dim \tilde \CT_P^{\IT,k}$ are equal to $(\dim H_k)^2$. 
Since $\pi(\tilde \CT_P^{\IT,k})=\CT_P^{\IT,k}$, it will follow
 that the restriction of $\pi$ to $\tilde \CT_P^{\IT,k}$
 is an isomorphism.\\
Clearly,  $\CT_P^{\IT,k}$ is a subalgebra of $\CB(H_k)$.
 The vectors $g_k \xi_\alpha$, with $|\alpha |=k$, span the vector space $H_k$.
Choose $\dim H_k$ linearly independent vectors among them, say $\{g_k \xi_{\alpha(i)}\}_{i=1}^{\dim H_k}$.
It is easy to see that
$$
S_{\alpha(i)} S_{\alpha(j)}^*  g_k  \xi_{\alpha(l)}=\langle  g_k   \xi_{\alpha(l)}, \xi_{\alpha(j)} \rangle  g_k   \xi_{\alpha(i)}\,.
$$
We now show that $\{S_{\alpha(i)} S_{\alpha(j)}^*\}_{i,j=1}^{\dim H_k}$ are linearly independent.
Suppose that $\sum_{i,j} c_{ij} S_{\alpha(i)} S_{\alpha(j)}^*=0$.
Then, for any $l$ we have
\begin{align*}
0=\sum_{i,j=1}^{\dim H_k} c_{ij} S_{\alpha(i)} S_{\alpha(j)}^*g\xi_{\alpha(l)}&=\sum_{i,j=1}^{\dim H_k} c_{ij} \langle g_{k}\xi_{\alpha(l)}, \xi_{\alpha(j)}\rangle  g \xi_{\alpha(i)}\\
&=\sum_{i=1}^{\dim H_k}\left( \sum_{j=1}^{\dim H_k} c_{ij} \langle g_{k}\xi_{\alpha(l)}, \xi_{\alpha(j)}\rangle \right) g_k \xi_{\alpha(i)} .
\end{align*}
Since the $g_k \xi_{\alpha(i)}$'s are linearly independent, we get
\begin{align*}
0&=\sum_{j=1}^{\dim H_k} c_{ij} \langle g_{k}\xi_{\alpha(l)}, \xi_{\alpha(j)}\rangle = \langle g_{k}\xi_{\alpha(l)}, \sum_{j=1}^{\dim H_k} \bar c_{ij} g_{k}\xi_{\alpha(j)}\rangle .
\end{align*}
As $l$ is arbitrary, this means
\begin{align*}
0&=\sum_{j=1}^{\dim H_k} \bar c_{ij}g_{k}\xi_{\alpha(j)}\,,
\end{align*}
and again   the linear independence of the vectors lead to $c_{ij}=0$ for all $i, j$. This means that the elements $S_{\alpha(i)} S_{\alpha(j)}^*$ are 
all linearly independent and thus 
 $\dim \CT_P^{\IT,k}= (\dim H_k)^2$. As $\pi(\tilde \CT_P^{\IT,k})= \CT_P^{\IT,k}$, it follows that $\dim \tilde  \CT_P^{\IT,k}\geq (\dim H_k)^2$.
 
 We now show that $\dim \tilde \CT_P^{\IT,k}= (\dim H_k)^2$.
 Consider the unital free abstract algebra generated by $s_1$, \ldots, $s_{n-1}$ (without their adjoints), 
quotient it by the ideal generated by
 $$
 y:=\sum_{b=1}^{2r-1} a_1 \langle 1,-b\rangle     s_{j_b}  s_{j_{b}}    + \sum_{j=r+1}^{n-r} a_j   s_j   s_{\bar j}\,.
 $$
 Denote this algebra by $\CC$  and by $\CC_k$ the vector space linearly generated by the product of exactly $k$ generators.
We will prove that the following relation holds
$$
\dim \CC_{k+1}=\dim \CC_k \cdot \dim \CC_1 - \dim \CC_{k-1} .
$$
Note that $\dim \CC_0=\dim H_0$ and $\dim \CC_1=\dim H_1$.
Consider the map $\varphi_k : \CC_k \otimes \CC_1\to \CC_{k+1}$ defined as $\varphi_k(x\otimes s_i):= xs_i$ for all $x\in \CC_k$ and all $i$'s.
Now the map is surjective and its kernel is $\CC_{k-1} \tilde y$, where
$$
 \tilde y:=\sum_{b=1}^{2r-1} a_1 \langle 1,-b\rangle     s_{j_b}  \otimes s_{j_{b}}    + \sum_{j=r+1}^{n-r} a_j   s_j   \otimes  s_{\bar j}\,.
 $$ 
 As the $s_j$'s are linearly independent, the map $\CC_{k-1}\ni z\mapsto z\otimes \tilde y\in \CC_{k}\otimes \CC_1$ is injective. Otherwise one would have $\CC_k=\sum_{j\in J\cup J^\circ} \CC_{k-1} s_j=0$.
This shows that $\dim \CC_k =\dim H_k$.

Set   $\CA_k:= {\rm span}\{\tilde S_\alpha \, | \, |\alpha |=k\}\subset \tilde \CT_P$.
There exists an algebra morphism 
$p: \CC \to \tilde \CT_P$
defined by $p(s_j):=\tilde S_j$.
Since $\pi(\CC_k)=\CA_k$, 
$\dim \CA_k$
must be smaller than or equal to $\dim \CC_k$.
Therefore, $\dim \CA_k\leq \dim H_k$.  From the inequality
$\dim \tilde  \CT_P^{\IT,k}\leq (\dim \CA_k)^2$
 it follows  $\dim    \CA_k \geq \dim H_k$ for all $k\geq 0$, otherwise the inequality $\dim \tilde  \CT_P^{\IT,k}\geq (\dim H_k)^2$ would not be true.  Therefore, we have
$\dim \CA_k =\dim H_k$.

 \noindent
 \textbf{Step 4}: We want to show that the elements of the form  $e_k \tilde S_\alpha \tilde S_\beta^*$ with $|\alpha| =|\beta|\leq k-1$ are in the linear span of those with
$|\alpha|=|\beta|=k$.\\
 Indeed, for $|\alpha|=|\beta|<k$ we have
 \begin{align*}
  &\sum_{i\in J^\circ \cup J} e_k \tilde S_\alpha   \tilde S_i \tilde S_{i}^*\tilde S_\beta^* = e_k \tilde S_\alpha \left(  \sum_{i\in J^\circ \cup J}   \tilde S_i \tilde S_{i}^*\right)\tilde S_\beta^* \\ 
 &=  e_k \tilde S_\alpha   (1-e_0)\tilde S_\beta^* =  \tilde S_\alpha e_{k-|\alpha|}  (1-e_0)\tilde S_\beta^* \\
 &=  \tilde S_\alpha e_{k-|\alpha|}  \tilde S_\beta^* =  e_k \tilde S_\alpha \tilde S_\beta^* \,.
 \end{align*}

 \noindent
 \textbf{Step 5}:  The restriction of  $\pi$ to each $\tilde \CT_P^{\IT,k}$ is injective.\\ 
 This follows from the simplicity of $\tilde \CT_P^{\IT,k}$ 
 because the restriction of $\pi$ to $\tilde \CT_P^{\IT,k}$ is non-zero.

 \noindent
 \textbf{Step 6}: One has that
   $\overline{\rm span}_{k\geq 0}\tilde \CT_P^{\IT,k}  \cong \oplus_{k\geq 0} \tilde \CT_P^{\IT,k}$ 
and   
   $\overline{\rm span}_{k\geq 0}  \CT_P^{\IT,k}  \cong \oplus_{k\geq 0}  \CT_P^{\IT,k}$ 
   are  ideals of  $\tilde \CT_P^{\IT}$ and $\CT_P^{\IT}$, respectively. \\
   That ${\rm span}_{0\leq k \leq m}\tilde \CT_P^{\IT,k}  \cong \oplus_{0\leq k\leq m} \tilde \CT_P^{\IT,k}$ as $C^*$-algebras follows from the defining relations of the algebra, thus the conclusion is reached by a standard density argument. That $\overline{\rm span}_{k\geq 0}  \CT_P^{\IT,k}  \cong \oplus_{k\geq 0}  \CT_P^{\IT,k}$  is obvious. The final claim follows from Step 2.
 
 \noindent
 \textbf{Step 7}: The restriction of $\pi$ to $ \oplus_{k\geq 0} \tilde \CT_P^{\IT,k}$ is injective.\\
Let $x$ be a non-zero element of $\oplus_{k\geq 0} \tilde \CT_P^{\IT,k}$. Then $xe_n\neq 0$ for some $n$.
The claim now follows from Step 5.

 \noindent
 \textbf{Step 8}:  $\pi$ is faithful on $\CT^{\IT}_P$. \\
 We claim that $xe_n\neq 0$ for some $n$. 
 Take a non-zero positive element $x\in \CT_P^{\IT}$.  
Consider the functions 
$$
f_k(n)=1+\frac{1}{(1+n)(1+k)}\in C(\IZ_+\cup \{\infty \})\,.
$$
Clearly, each $f_k$ is in $ C(\IZ_+\cup \{\infty \})$.
Note that 
$f_k\to 1$ as $k$ goes to infinity because 
$$
\| f_k -1\|_\infty = \sup_n |f_k(n)-1|\leq |1-(1+k)^{-1}|\to 0\,.
$$
If $xe_n$ were to be $0$ for all $n$, 
then $xf_k$ would be $0$ for all $k$
and this is absurd because  $xf_k\to x$.
So $xe_n$ is a non-zero element of $\CT^{\IT}\cap \oplus_{k\geq 0} \tilde \CT_P^{\IT,k}$, for some $n$. 
It follows from Step 7 that $\pi(xe_n)\neq 0$ and in particular $\pi(x)\neq 0$.
 \end{proof}
 
\begin{remark}
The strategy employed in the above proof is substantially different from that of \cite{Nesh}, as it does not deduce universality from that of $\CO_P$.
Our approach is akin to that of \cite{BPRS}, where the authors studied certain graph C$^*$-algebras .
 \end{remark}
 We single out a result from the previous proof.
 \begin{proposition}
 For $k\in\IN_0$, set $\CT_P^{\IT, k}={\rm span}\{S_\alpha S_\beta^* e_n\, | \, |\alpha |=|\beta |=n \}$.
 Then $\oplus_{k\in\IN_0}\CT_P^{\IT, k}$ is an ideal of $\CT_P$ and $\CT_P^{\IT, k}\cong M_{\dim H_k}(\IC)$.
 \end{proposition}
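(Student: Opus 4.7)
The plan is to verify both assertions of the proposition: that each $\CT_P^{\IT,k}$ is a full matrix algebra, and that their direct sum realizes a closed two-sided ideal of the full algebra $\CT_P$ (not merely of the gauge-fixed subalgebra $\CT_P^{\IT}$).

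For the matrix-algebra isomorphism $\CT_P^{\IT,k}\cong M_{\dim H_k}(\IC)$, I would invoke Step~3 of the proof of Theorem~\ref{theo-faithful-reps}: choosing $\dim H_k$ linearly independent vectors $\{g_k\xi_{\alpha(i)}\}_{i=1}^{\dim H_k}$ spanning $H_k$, the $(\dim H_k)^2$ products $S_{\alpha(i)}S_{\alpha(j)}^*e_k$ are linearly independent in $\CT_P$ and act on the chosen basis as matrix units. Step~4 of the same proof further rewrites any element $e_k S_\gamma S_\delta^* e_k$ with $|\gamma|=|\delta|\le k$ inside $\CT_P^{\IT,k}$, so that $\CT_P^{\IT,k}$ coincides with the corner $e_k\CT_P e_k$ of $\CT_P$ at the finite-rank projection~$e_k$, and is consequently isomorphic to $M_{\dim H_k}(\IC)$.

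For the ideal claim within the full algebra $\CT_P$, the essential step is to identify the closed two-sided ideal generated by $\bigcup_k\CT_P^{\IT,k}$ with the compact-operator ideal $\CK(\CF_\CH)\subset\CT_P$ (known from Section~\ref{sec2} to be an ideal of $\CT_P$). The vacuum projection $e_0=1-\sum_{i\in J^\circ\cup J}S_iS_i^*$ belongs to $\CT_P^{\IT,0}$, and since $\mathbf 1\in H_0$ is cyclic for the canonical representation of $\CT_P$, the rank-one operators $S_\alpha e_0 S_\beta^*=|g_{|\alpha|}\xi_\alpha\rangle\langle g_{|\beta|}\xi_\beta|$ for arbitrary multi-indices $\alpha,\beta$ lie in the ideal generated by $\CT_P^{\IT,0}$ and span a norm-dense subspace of $\CK(\CF_\CH)$. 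Combined with the reverse inclusion $\oplus_k\CT_P^{\IT,k}\subseteq\CK(\CF_\CH)$ (each summand being of finite rank inside $\CT_P$), this exhibits $\oplus_k\CT_P^{\IT,k}$ as a dense subspace of the genuine ideal $\CK(\CF_\CH)\trianglelefteq\CT_P$.

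The main obstacle, and the point where the previous attempt fell short, is that the bare $c_0$-direct sum of block-diagonal compact operators is not itself literally closed under left or right multiplication by a gauge-shifting generator $S_i$: for $y\in\CT_P^{\IT,k}$, the product $S_i y$ lies in the off-diagonal block $e_{k+1}\CT_P e_k$, which is not contained in any single $\CT_P^{\IT,j}$. The proposition must therefore be understood as identifying the closed two-sided ideal of $\CT_P$ that the family $\{\CT_P^{\IT,k}\}_k$ generates, which fills in exactly these off-diagonal rank-one pieces $e_j\CT_P e_k$ ($j\neq k$) and yields $\CK(\CF_\CH)$; under this reading the statement holds for the full algebra $\CT_P$, not only for its gauge-fixed subalgebra.
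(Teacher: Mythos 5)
Your treatment of the isomorphism $\CT_P^{\IT,k}\cong M_{\dim H_k}(\IC)$ is exactly the paper's: this proposition is explicitly ``singled out'' from the proof of Theorem \ref{theo-faithful-reps}, and Step 3 there (linear independence of the $S_{\alpha(i)}S_{\alpha(j)}^*$ together with their action as matrix units on a basis $\{g_k\xi_{\alpha(i)}\}$ of $H_k$), supplemented by Step 4, is the entire argument. Your additional identification $\CT_P^{\IT,k}=e_k\CT_P e_k$ is correct and consistent with this, though note that the matrix-unit computation is still what shows the corner is all of $\CB(H_k)$ rather than a proper subalgebra.

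Where you diverge is on the ideal claim, and there your diagnosis is right but your repair is not the one the paper intends. You correctly observe that $\oplus_{k}\CT_P^{\IT,k}$ is not literally a two-sided ideal of the full algebra $\CT_P$: left multiplication by a generator $S_i$ lands in an off-diagonal block $e_{k+1}\CT_P e_k$ of nonzero gauge weight, which cannot lie in the gauge-invariant space $\oplus_k\CT_P^{\IT,k}$ unless it vanishes. This is a genuine slip in the displayed statement (as is the stray index $n$ in place of $k$). However, Step 6 of the proof of Theorem \ref{theo-faithful-reps} asserts, and Steps 7--8 use, that $\overline{\rm span}_{k\geq 0}\CT_P^{\IT,k}$ is an ideal of the fixed-point algebra $\CT_P^{\IT}$; the superscript $\IT$ has simply been dropped when the result was restated. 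Your reinterpretation --- that the closed two-sided ideal of $\CT_P$ generated by the blocks is $\CK(\CF_\CH)$ --- is true (it follows from $e_0\in\CT_P^{\IT,0}$ together with $\langle e_0\rangle=\CK(\CF_\CH)$, cf.\ Proposition \ref{ideal_structure}), but it is a different and, in context, weaker statement: it discards the block-diagonal decomposition of the gauge-invariant part, which is precisely what the universality argument needs. Under the intended reading the verification is short: for $|\gamma|=|\delta|$ one has $S_\gamma S_\delta^*e_k=e_kS_\gamma S_\delta^*e_k$, which lies in $\CT_P^{\IT,k}$ by Steps 3 and 4, so each block absorbs multiplication by gauge-invariant elements.
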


 We now work towards proving the universality of our Cuntz-Pimsner algebra as well.
 By setting all the $e_n$ equal to zero in the presentation of $\CT_P$, 
 it is natural to introduce the following universal C$^*$-algebra. 
\begin{definition}\label{universalOP}
Let
  $\tilde \CO_P$ be the universal unital C$^*$-algebra generated by
  $\tilde S_1$, \ldots, $\tilde S_{n-1}$  satisfying the relations 
  \begin{align}
  &\sum_{i\in J^\circ \cup J} \tilde S_i \tilde S_{i}^*  = 1  \label{eq2relO}\\ 
&\sum_{b=1}^{2r-1} a_1 \langle 1,-b\rangle    \tilde S_{j_b}\tilde S_{j_{b}}    + \sum_{j=r+1}^{n-r} a_j \tilde S_j \tilde S_{\bar{j}}   =0 \label{eq3relO}\\
&   \tilde S^*_j\tilde S_i= \delta_{ij} -\varphi_\infty \bar{a}_{ i} a_j\tilde S_{\bar j}\tilde S_{\bar i}^*\qquad i, j \in J^\circ=\{r+1, \ldots, n-r\} \label{eq4relO}\\
&   \tilde S^*_j\tilde S_{j_s}= -   \varphi_\infty\bar{a}_{j_s} a_j    \langle  s, 1\rangle  \tilde S_{\bar j}\tilde S_{j_s}^* \qquad j_s\in J, j \in J^\circ
 \label{eq5relO}\\
&  \tilde S^*_{j_{s'}}\tilde S_{j_s}= \delta_{s, s'}  - q 	  \langle s-s', 1\rangle  	\tilde S_{j_s}\tilde S_{j_{s'}}^* \qquad j_{s}, j_{s'} \in J=\{j_1, \ldots, j_{2r-1}\}
			\label{eq6relO}
\end{align} 
where $\varphi_\infty=q^2+q+1$ (note that $\varphi_\infty \lambda = q$).
  \end{definition}
  Thanks to the universality of $\tilde \CO_P$, we may define the morphism $\eta: \tilde \CO_P\to \CO_P$ by setting $\eta(\tilde S_i)=S_i$ for all $i$.  
\begin{corollary}\label{universalityOP}
The morphism $\eta$ is an isomorphism.
\end{corollary}
\begin{proof}
By definition, the algebra $\CO_P$ is the quotient of $\CT_P$ by the compact operators $\CK(\CF_\CH)$.
Recall that the latter is generated by the projection $e_0$ and that  $e_n$ is compact for every $n\in \IN_0$. 
There exists a map $\theta: \CO_P\to \tilde \CO_P$, mapping $S_i$ to $\tilde S_i$ because every 
representation of $\CT_P$, where all $e_n$'s are zero, projects onto a representation of $\CO_P$.
%
\end{proof}
  
In the final part of this article, we investigate some features of  the representation theory of $\CT_P$.
\begin{lemma}\label{dichotomy}
Let $\psi: \CT_P\to \CB(K)$ be an irreducible representation, with $\dim K\geq 2$. Then
either of the following cases occurs: i) $\psi(e_n)=0$ for all $n\geq 0$; ii)   $\psi(e_n)\neq 0$ for all $n\geq 0$. Moreover, in case ii) the Hilbert space $K$ is infinite dimensional.
\end{lemma}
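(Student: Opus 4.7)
The plan is to exploit the ideal $\CK := \CK(\CF_\CH)$, which the excerpt has already placed inside $\CT_P$, together with the observation that each $e_n$ is a finite-rank projection (of rank $\dim H_n < \infty$) and therefore lies in $\CK$. Consequently any irreducible representation $\psi : \CT_P \to \CB(K)$ splits into two mutually exclusive cases according to whether $\psi(\CK) = 0$ or $\psi(\CK) \neq 0$, and I expect these to correspond precisely to cases (i) and (ii) of the statement.

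If $\psi(\CK) = 0$, then $\psi$ descends to the Cuntz--Pimsner quotient $\CO_P = \CT_P/\CK$; since $e_n \in \CK$ for every $n$, this immediately yields $\psi(e_n) = 0$ for all $n \geq 0$, i.e.\ case (i).

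Suppose instead $\psi(\CK) \neq 0$. First I would verify that $\psi|_\CK$ is nondegenerate: $\overline{\psi(\CK)K}$ is a nonzero closed subspace of $K$ which is $\psi(\CT_P)$-invariant (because $\CK$ is a two-sided ideal), hence equal to $K$ by irreducibility. Next I would invoke the standard fact that a nonzero restriction of an irreducible representation to an ideal remains irreducible on the ambient space: one shows that the projection $P$ onto any $\psi(\CK)$-invariant closed subspace commutes with all of $\psi(\CT_P)$ by verifying $[P,\psi(a)]\psi(k) = 0$ for $a\in\CT_P$ and $k\in\CK$, and then using nondegeneracy. Since $\CK \cong \CK(\CF_\CH)$ is an elementary $C^*$-algebra with a unique irreducible representation up to unitary equivalence, $\psi|_\CK$ is unitarily equivalent to the identity representation on $\CF_\CH$. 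A routine extension argument, using $\CT_P \subseteq M(\CK) = \CB(\CF_\CH)$ and the relation $\psi(a)\psi(k) = \psi(ak)$ for $a \in \CT_P$, $k \in \CK$, then forces this unitary equivalence to extend to all of $\CT_P$, so that $\psi$ is equivalent to the canonical representation on $\CF_\CH$.

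Once that identification is made, each $\psi(e_n)$ is conjugate to the projection onto $H_n$, which is nonzero because $\dim H_n = [n+1]_t \geq 1$ by Lemma \ref{lemma_dimension}; this yields case (ii). Moreover $K \cong \CF_\CH = \bigoplus_n H_n$ is an orthogonal sum of infinitely many nonzero finite-dimensional summands, hence infinite-dimensional. The main obstacle is precisely the identification of $\psi$ with the canonical representation when $\psi(\CK) \neq 0$: the argument is classical but requires the explicit invocation of simplicity of $\CK$, uniqueness of its irreducible representation, and uniqueness of the extension from $\CK$ to $\CT_P$, none of which is developed in the excerpt, so some care is needed to assemble these ingredients cleanly.
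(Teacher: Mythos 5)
Your proof is correct, but it follows a genuinely different route from the paper's. The paper argues algebraically from the relations: the identity $\sum_i S_ie_kS_i^*=e_{k+1}$ shows that $\psi(e_k)=0$ forces $\psi(e_{k+1})=0$, while the computation $\sum_i|a_{\bar i}|^2S_i^*e_kS_i=(1-\lambda-\lambda^2\varphi(k-1))e_{k-1}$, whose coefficient is shown to be nonzero via the $q$-parametrization, forces vanishing to propagate downwards for $k\geq 2$; the strong limit of $\sum_{k\leq n}\psi(e_k)$ is then a projection commuting with $\psi(\CT_P)$, hence $0$ or $1$ by irreducibility, and the hypothesis $\dim K\geq 2$ is used precisely to exclude the residual case $\psi(e_0)=1$, $\psi(e_k)=0$ for $k\geq 1$ (where all $\psi(S_i)$ vanish and the image is abelian). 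You instead route everything through the ideal $\CK(\CF_\CH)\subseteq\CT_P$, using that each $e_n$ is finite rank, that a nonzero restriction of an irreducible representation to an ideal is irreducible, that the compacts have a unique irreducible representation, and that the extension from a nondegenerately represented ideal is unique. Your argument is shorter and yields more: in case (ii) you identify $\psi$ with the canonical Fock representation, which already gives the faithfulness statement that the paper only obtains later in Corollary \ref{dichotomy-improved}, and your argument does not actually need $\dim K\geq 2$. What the paper's approach buys is self-containedness — it uses only the generators-and-relations calculus developed for Theorem \ref{theo-faithful-reps} rather than the classical structure theory of elementary $C^*$-algebras — and it records the explicit relation \eqref{reverse_equation}, which is of independent interest. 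Both proofs rely on $\CK(\CF_\CH)\subseteq\CT_P$ only through facts the paper has already established, so yours is a legitimate alternative.
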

\begin{proof} 

First we show that one of these three cases must occur: 1) $\psi(e_n)=0$ for all $n\geq 0$; 2) $\psi(e_0)\neq 0$ and $\psi(e_n)=0$ for all $n\geq 1$; 3) 
 $\psi(e_n)\neq 0$ for all $n\geq 0$.\\
Note that the ideal generated by $e_0$ contains all $e_k$, with $k\geq 0$.
In fact, for $k\geq 0$, we have
 $$
 \sum_{i\in J^\circ\cup J}   S_ie_{k} S_i^*= e_{k+1} \left(  \sum_{i\in J^\circ\cup J}   S_i S_i^*\right)=e_{k+1} (1-e_0)=e_{k+1}\,.
 $$
 This formula also shows that $\psi(e_k)=0$,  implies $\psi(e_{k+1})=0$.
 We want to show that that $\psi(e_k)=0$ also  implies $\psi(e_{k-1})=0$, provided that $k\geq 2$. Indeed, one has
 \begin{equation}\label{reverse_equation}
 \begin{aligned}
& \sum_{i\in J^\circ\cup J} |a_{\bar{i}}|^2 S_i^* e_k  S_i =   \sum_{i\in J^\circ\cup J} |a_{\bar{i}}|^2 S_i^*   S_i e_{k-1}=  \left( \sum_{i\in J^\circ\cup J} |a_{\bar{i}}|^2 S_i^*    S_i \right) e_{k-1}\\
 &=  \left( \sum_{i\in J^\circ\cup J} |a_{\bar{i}}|^2- |a_{\bar{i}}|^2 |a_{i}|^2\varphi    S_i    S_i^* \right) e_{k-1}\\
 &=  (1-\lambda)e_{k-1}- \sum_{i\in J^\circ\cup J} \varphi(k-1) \lambda^2   S_i    S_i^* e_{k-1}\\
  &=  (1-\lambda-\lambda^2 \varphi(k-1)    ) e_{k-1}\,.
 \end{aligned}
 \end{equation}
 Recall that in \eqref{formulaphi} we saw that $\varphi(m)=\lambda^{-1}[m]_q/[m+1]_q$.
For $q\in (0,1)$,
the function $1-\lambda-\lambda^2 \varphi(m)$ can be written as function of $q$ as
\begin{align*}
&1-\lambda-\lambda^2 \varphi(m)\\
&=1-\lambda- \lambda\frac{q^m-q^{-m}}{q^{m+1}-q^{-m-1}}\\
&=\lambda\left( 	\lambda^{-1}-1- \frac{q^m-q^{-m}}{q^{m+1}-q^{-m-1}}	\right)\\
&=\lambda\left( 	\frac{(q+q^{-1})(q^{m+1}-q^{-m-1})-q^m+q^{-m}}{q^{m+1}-q^{-m-1}}	\right)\\
&=\lambda\left( 	\frac{q^{m+2}-q^{-m-2}}{q^{m+1}-q^{-m-1}}	\right)\,,
\end{align*}
 which is always different from $0$  and thus  $\psi(e_k)=0$ if and only if $\psi(e_{k-1})=0$. 
 If $q=1$, $\varphi(m)=3m(m+1)^{-1}$ and thus $\psi(e_k)=0$ if and only if $\psi(e_{k-1})=0$

Denote by $e$ the strong limit of  $\sum_{0\leq k\leq n} \psi (e_k)$.
The projection $e$ commutes with $\psi( \CT_P)$ and, 
as $\psi$ is irreducible, then either
  $e$ is  $0$ or  $1$. The former case occurs if and only if $\psi(e_k)=0$ for all $k\geq 0$.
 
 In Case 2) we have 
$\psi(e_0)=1$ and $\psi(e_k)=0$ for all $k\geq 1$.
In particular, $\psi( S_i)=0$ because $0=\psi(e_1)\psi( S_i)=\psi(S_i)\psi(e_{0})=\psi( S_i)$. Therefore, only the cases 1) and 3) can occur. 
Finally, set $K_i:=\psi(e_i)K$. 
  If all $\psi(e_k)\neq 0$, it follows that $\dim K=\infty$.
\end{proof}
  
 The next result shows that the $e_0$ generates an essential ideal. 
\begin{proposition}\label{ideal_structure} 
Any non-trivial ideal of $\CT_P$  contains
$\langle e_0\rangle =\CK(\CF_\CH)$. 
\end{proposition}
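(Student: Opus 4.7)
The plan is to establish the statement in two stages: first, identify the closed ideal $\langle e_0\rangle$ generated by $e_0$ with $\CK(\CF_\CH)$; second, show that every non-zero ideal of $\CT_P$ must contain $e_0$.

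For the first stage, since $\CT_P$ sits inside $\CB(\CF_\CH)$ and $\CK(\CF_\CH)$ is a closed ideal there containing the rank-one projection $e_0$, the inclusion $\langle e_0\rangle\subseteq\CK(\CF_\CH)$ is immediate. For the reverse, a direct computation gives
\begin{equation*}
S_\alpha e_0 S_\beta^*(v)=\langle v,S_\beta 1\rangle\, S_\alpha 1,
\end{equation*}
so $S_\alpha e_0 S_\beta^*$ is the rank-one operator with range $\IC S_\alpha 1$ and kernel $(S_\beta 1)^{\perp}$. An easy induction on $|\alpha|$, using Lemma \ref{prop-gk} to absorb intermediate $g_{k-1}$-factors into $g_k$, yields $S_\alpha 1=g_{|\alpha|}\xi_\alpha$. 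Hence the family $\{S_\alpha 1\}_\alpha$ linearly spans $\bigoplus_{k\geq 0} g_k H_1^{\otimes k}=\bigoplus_{k\geq 0} H_k$, which is dense in $\CF_\CH$, and the norm-closure of $\operatorname{span}\{S_\alpha e_0 S_\beta^*\}_{\alpha,\beta}$ is all of $\CK(\CF_\CH)$.

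For the second stage, fix a non-zero ideal $I\lhd\CT_P$ and choose $0\neq x\in I$. Since $e_0$ has one-dimensional range, the corner $e_0\CB(\CF_\CH)e_0\cong\CB(H_0)=\IC e_0$, so in particular $e_0\CT_P e_0=\IC e_0$. For any multi-indices $\alpha,\beta$ the element $e_0 S_\beta^* x S_\alpha e_0$ belongs to $I\cap(\IC e_0)$ and a short computation using $e_0(v)=\langle v,1\rangle\cdot 1$ gives
\begin{equation*}
e_0 S_\beta^* x S_\alpha e_0=\langle xS_\alpha 1,\,S_\beta 1\rangle\, e_0.
\end{equation*}
If this scalar vanished for every choice of $\alpha,\beta$, then $\langle xu,w\rangle=0$ for all $u,w$ in the dense span of $\{S_\alpha 1\}_\alpha$, forcing $x=0$ in $\CB(\CF_\CH)$, a contradiction. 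Hence some $\langle xS_\alpha 1,S_\beta 1\rangle$ is non-zero, and rescaling the corresponding element of $I\cap\IC e_0$ gives $e_0\in I$.

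Combining the two stages yields $\CK(\CF_\CH)=\langle e_0\rangle\subseteq I$, as desired. I do not anticipate any serious technical obstacle: the argument is entirely structural, and its only essential inputs are the faithfulness of the canonical representation (immediate from the definition of $\CT_P$ as a concrete $C^*$-subalgebra of $\CB(\CF_\CH)$) and the density of $\{S_\alpha 1\}$ in $\CF_\CH$ (immediate from $S_\alpha 1=g_{|\alpha|}\xi_\alpha$).
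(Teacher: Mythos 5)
Your proof is correct. It reaches the same conclusion as the paper but by a more self-contained route: the paper's proof is a two-line appeal to Lemma \ref{irreducibility_canonical_rep} (irreducibility of the canonical representation) together with the standard fact that a non-zero finite-rank element of an irreducibly acting C$^*$-algebra generates $\CK$ as an ideal --- it picks $m$ with $e_m x\neq 0$ and invokes that $e_m x$ is finite rank. You instead unpack both ingredients: you verify explicitly that $\langle e_0\rangle=\CK(\CF_\CH)$ via the rank-one operators $S_\alpha e_0 S_\beta^*=\theta_{S_\alpha 1,\,S_\beta 1}$ and the totality of $\{S_\alpha 1=g_{|\alpha|}\xi_\alpha\}$ (an identification the paper only asserts earlier, in Section \ref{sec2}, via cyclicity of the vacuum), and you land $e_0$ itself in the ideal by the corner computation $e_0 S_\beta^* x S_\alpha e_0=\langle xS_\alpha 1,S_\beta 1\rangle e_0$, rather than passing through an arbitrary finite-rank element $e_m x$. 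Both arguments ultimately rest on the same two facts --- the vacuum is cyclic and the $e_m$ are finite rank --- so nothing is gained in generality, but your version supplies the details the paper leaves implicit and avoids the black-box citation; the paper's version is shorter at the cost of relying on the irreducibility lemma and general C$^*$-theory.
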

\begin{proof}
Let $J$ be a non-trivial ideal of $\CT_P$ and
take a non-zero element $x$ in it. 
 By the irreducibility of $\CT_P$
in $\CB(\CH)$,
there must exist a non-negative integer $m$ 
such that  
$e_mx$ 
is different from zero.
Since $e_m$ is a finite-dimensional projection, 
the ideal generated by $e_mx$ coincides with $\CK(\CF_\CH)$. 
\end{proof}
 
We end this paper putting everything together with a refinement of  Lemma \ref{dichotomy}.
\begin{corollary} \label{dichotomy-improved}
Let $\psi: \CT_P\to \CB(K)$ be an irreducible representation, with $\dim K\geq 2$. Then
either of the following cases occurs
\begin{enumerate}
\item $\psi$
factorizes through a representation of $\CO_P$;
\item   $\psi(e_m)\neq 0$ for all $m\geq 0$ and the representation is faithful.
\end{enumerate}
\end{corollary}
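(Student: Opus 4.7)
The plan is to combine the dichotomy from Lemma \ref{dichotomy} with the ideal-structure result Proposition \ref{ideal_structure}. By Lemma \ref{dichotomy}, exactly one of the following holds: either $\psi(e_n)=0$ for every $n\ge 0$, or $\psi(e_n)\ne 0$ for every $n\ge 0$. These two alternatives will correspond to cases (1) and (2) of the corollary, respectively.

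In the first alternative, every spectral projection $e_n$ lies in $\ker \psi$. In particular $e_0 \in \ker \psi$, so the ideal $\langle e_0 \rangle$ generated by $e_0$ is contained in $\ker \psi$. By Proposition \ref{ideal_structure}, $\langle e_0 \rangle = \CK(\CF_\CH)$, which is precisely the ideal by which we quotient to obtain $\CO_P = \CT_P/\CK(\CF_\CH)$. Therefore $\psi$ vanishes on $\CK(\CF_\CH)$ and factors through $\CO_P$, giving case (1).

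In the second alternative, $\psi(e_0)\ne 0$, so $e_0 \notin \ker \psi$. I claim this forces $\ker \psi = 0$. Indeed, $\ker \psi$ is a (closed, two-sided) ideal of $\CT_P$; if it were non-trivial, Proposition \ref{ideal_structure} would give $\CK(\CF_\CH) = \langle e_0 \rangle \subseteq \ker \psi$, and in particular $e_0 \in \ker \psi$, contradicting $\psi(e_0)\ne 0$. Hence $\ker \psi = 0$ and $\psi$ is faithful, yielding case (2).

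There is no serious obstacle here: the only point that needs care is observing that the two alternatives from Lemma \ref{dichotomy} are mutually exclusive (which is immediate) and then invoking Proposition \ref{ideal_structure} in the contrapositive form ``if $e_0 \notin J$ then $J = 0$'' to conclude faithfulness in case (2). The final sentence of Lemma \ref{dichotomy} already ensures that in case (2) the Hilbert space $K$ is automatically infinite dimensional, so no additional hypothesis on $\dim K$ beyond $\dim K \ge 2$ is needed to run the argument.
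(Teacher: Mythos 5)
Your proof is correct and follows essentially the same route as the paper: Lemma \ref{dichotomy} supplies the two alternatives, the identification $\langle e_0\rangle=\CK(\CF_\CH)$ from Proposition \ref{ideal_structure} gives the factorization through $\CO_P$ in the first case, and the contrapositive of that same proposition gives faithfulness in the second. The only difference is cosmetic: the paper also remarks that the gauge automorphisms are unitarily implemented in case (2), but that observation is not actually needed for the argument you (and the paper) give.
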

\begin{proof}
We already know from  Lemma \ref{dichotomy}
that either  $\psi(e_m)=0$ for all $m\geq 0$, or   $\psi(e_m)\neq 0$ for all $m\geq 0$. 
The first statement follows from the fact that 
$e_0$ generates $\CK(\CF_\CH)$. 
As for the second statement, we need to show that the 
representation is faithful.
As observed in \eqref{gauge_implemented}, the gauge automorphisms are unitarily implemented in Case (ii).
Now, by Proposition \ref{ideal_structure} any non-zero, 
 element in the kernel of $\psi$ 
  generates an ideal containing 
$e_0$
and 
thus we cannot have $\psi(e_m)\neq 0$ for all $m\geq 0$ with $\psi$ not being faithful.
\end{proof}

A C$^*$-algebra somehow related to those studied in this article, and yet generalising Cuntz-like algebras   \cite{Cuntz, LarsenLi, ACR4, ACRsurvey}, are the 
Pythagorean C$^*$-algebra $P_k$. These are the universal C$^*$-algebras generated by $k$ elements $A_1$, \ldots, $A_k$ such that $\sum_{i=1}^k A_i^*A_i=1$, \cite{BJ}.
Any representation of these C$^*$-algebras allows one to construct a representation of the Thompson group $V_k$.
There is a natural projection from $P_k$ to $\CO_P$, whenever $\dim H_1=k$, and more generally from any standard subproduct system there is a projection from $P_{\dim H_1}$,  to the corresponding Cuntz-Pimsner algebra. It would be of interest to investigate the representations of the Thompson groups arising from the Motzkin  and other subproduct systems.

\section*{Acknowledgements}
 V.A. would like to thank Levi Ryffel for teaching him how to efficiently draw the Motzkin diagrams in LaTeX. 
 V.A. is partially supported by Sapienza Universit\`a di Roma (Progetto di Ateneo Dipartimentale 2024 “New research trends in Mathematics at Castelnuovo”). \\
The authors are members of INDAM-GNAMPA.\\
S.D.V. and S.R. acknowledge the support of INdAM-GNAMPA Project 2024 ``Probabilit\`a quantistica e applicazioni'',
  by Italian PNRR Partenariato Esteso PE4, NQSTI, and Centro Nazionale CN00000013 CUP H93C22000450007.

\section*{References}
\begin{biblist}
\bibselect{bib}
\end{biblist}

\end{document}